\newtheorem{theorem}{Theorem}[section]
\newtheorem{lemma}[theorem]{Lemma}
\newtheorem{proposition}[theorem]{Proposition}
\newtheorem{definition}[theorem]{Definition}
\begin{document}
\title{The Auslander-Reiten components seen as Quasi-hereditary Categories}

\author{Mart\'in Ortiz-Morales}
\address{Facultad de Ciencias, Universidad Aut\'onoma del Estado de M\'exico, M\'exico.}
\email{mortizmo@uaemex.mx}
\urladdr{http://www.uaemex.mx/fciencias/}
\thanks{The author thanks  PROMEP/103.5/13/6535  for giving him financial support for the development of this project. }
\date{October 1, 2015}
\subjclass{2000]{Primary 05C38, 15A15; Secondary 05A15, 15A18}}
\keywords{ Quasi-hereditary algebras, Functor categories}
\dedicatory{}
\thanks{This paper is in final form, and no version of it will be submitted
for publication elsewhere.}
\maketitle

\begin{abstract}
Quasi-hereditary  were introduced by L. Scott \cite{Scott, CPS1,CPS2} in order to deal highest weight categories  as they arise in the representation theory of
semi-simple complex Lie algebras and algebraic groups,  and they  have been a very important tool in the study of finite-dimensional algebras.
On the other hand, functor categories were introduced in representation
theory by M. Auslander [A], [AQM] and used in his proof of the first Brauer-Thrall conjecture [A2] and later on used systematically in his joint work with
I. Reiten on stable equivalence [AR], [AR2] and many other applications.
Recently, functor categories were used in [MVS3] to study the Auslander-Reiten components of finite-dimensional algebras.
The aim of the paper is to  introduce  the concept of  quasi-hereditary category, 
 and we can think of the components of the Auslander-Reiten components  as 
 quasi-hereditary categories. In this way, we have applications 
 to the functor category $\mathrm{Mod}(\mathcal{C} )$, with
$\mathcal C$  a component of the Auslander-Reiten quiver.
\end{abstract} 

\section{Introduction and basic concepts}

The notion of quasi-hereditary algebra was introduced by E. Cline, B. Parshall and L. Scott in their 
work on highest weight categories arising in representation theory of Lie algebras and algebraic groups \cite{Scott, CPS1,CPS2}.
Later quasi-hereditary algebras were amply studied by Dlab and Ringel \cite{Dlab,DR, Rin2}.

On the other hand, functor categories were introduced in representation theory by Auslander\cite{Auslander}
 and used in his proof of
the first Brauer–Thrall conjecture \cite{A2} and later  used systematically
in his joint work with I. Reiten on stable equivalence and many other
applications \cite{Dualizing, AR2}. Recently, functor categories were employed by  Martínez-Villa and Solberg to study the Auslander-Reiten 
components of finite-dimensional algebras
\cite{MVS3} and to develop  tilting theory  in arbitrary  functor categories  \cite{MVO1, MVO2}.

   In [MVS3], by using the concepts and the results on Koszul and Artin-Schelter
    regular categories from \cite{MVS1,MVS2},  results analogous to those presented  by Auslander in [A, Theorems 3.12 and 3.13] were shown. 
      There Auslander characterizes when the category of all additive contravariant functors from the category of finitely generated modules over a finite-dimensional algebra $\Lambda$ to abelian groups is left (or right) Noetherian  (as defined in \cite{MB}). This happens if and only
   if the algebra $\Lambda$ is of finite representation type (that is, has only a finite number of non-isomorphic indecomposable finitely generated modules). 
However, instead of considering the  functor category $\mathrm{Mod}(\mathrm{mod}\Lambda )$, they  consider the category of all additive contravariant graded functors from the associated graded category of the category of finitely generated $\Lambda$-modules to graded vector spaces. The associated
graded category of the finitely generated left $\Lambda$-modules with respect to the radical
of the category is a disjoint union of all the associated graded categories  of the additive closures of all the components  $\mathcal{K}$
 in the Auslander–Reiten quiver. Therefore, they reduced  the problem  to consider one component
$\mathcal K$ at the time and the associated graded category $\mathcal{A}_{\mathrm{gr}}(\mathcal K)$ of
that component $\mathcal{K}$.  They showed that for  regular components $\mathcal{K}$  the category of
graded functors from $\mathcal{A}_{\mathrm{gr}}(\mathcal{K})$ to graded vector spaces is left Noetherian if and only if the sections of
$\mathcal{K}$ are infinite Dynkin diagrams $A_\infty, A_\infty^\infty, D_\infty$ or extended  Dynkin diagrams.

In the same  spirit as in the above-mentioned results, in  this paper we  consider  applications to  
functor categories: categories formed by functors defined on a subcategory $\mathcal{C}$ of
the category of modules over a finite-dimensional algebra which take values in the
category of abelian groups. We generalize the theory of quasi-hereditary algebras  from modules to functor
categories, having in mind as one of the main motivations  to apply the developed results to study the Auslander-Reiten componentes
of finite-dimensional algebras seen as quasi-hereditary categories.

This  paper  consists of four sections.
In the first section, we fix the notation and recall some notions from functor
categories that will be used throughout the paper. In the second section, we generalize
some results about quasi-hereditary algebras from modules to functor categories, 
generalizing  the concept of  sequence of standard modules  from modules to sequence of standard subcategories $\Delta$ of $\mathcal{C}$. In addition we give some characterizations of the subcategory $\mathcal{F}(\Delta)$ consisting of the functors which have a   $\Delta$-filtration, starting with a $\mathrm{Hom}$-finite Krull-Schmidt category we obtain  general results and after we add some other conditions like dualizing and Noetherian, in order to obtain similar results for  the case of finite-dimensional algebras.    In the third section, we show that the category $\mathcal{F}(\Delta)$ is functorially finite if $\mathcal{C}$ is   dualizing, Krull-Schmidt,  Noetherian and quasi-hereditary with respect to a finite filtration (see [Theorem 1 , Rin2]).  The fourth section is dedicated to the examples in which  we exhibit some filtrations for the different Auslander-Reiten components to consider them  and as quasi-hereditary categories. We also obtain ad hoc a tilting category for $\mathbb{Z}A_\infty$. Finally,  we show that tensor product of quasi-hereditary categories is again a quasi-hereditary category.  

\subsection{The category $\mathrm{Mod}(\mathcal{C})$}
Throughout this section $\mathcal{C}$ will be an arbitrary skeletally small preadditive
category, and $\mathrm{Mod}(\mathcal{C})$ will denote the category
of contravariant functors from $\mathcal{C}$ to the category of abelian
groups.  According to  Mitchell \cite{MB}, we can think of $\mathcal{C}$
as a ring ''with several objects'' and $\mathrm{Mod}(\mathcal{C})$ as a
category of $\mathcal{C}$-modules. The aim of the paper is to show that the
notions of quasi-hereditary algebras  can be extended to $\mathrm{Mod}(\mathcal{C)}$, 
obtaining as a reult generalizations of some results  on quasi-hereditary algebras. 
In order to obtain generalizations of some  theorems  for quasi-hereditary algebras on  finite-dimensional algebras, we need to add restrictions to our category $\mathcal{C}$, such as Krull-Schmidt, $\mathrm{Hom}$-finite,
dualizing, etc.. To fix the notation, we refer to known results on functors
and categories that we use throughout the paper, making reference  to
the papers by Auslander and Reiten \cite{Auslander},\cite{AQM},\cite{Dualizing}.

  $\mathrm{Mod}(\mathcal{C})$ is then an abelian category with arbitrary sums and products; in
fact it has arbitrary limits and colimits, and the filtered limits are exact
(Ab5 in Grothendieck terminology). Furthermore,   $\mathrm{Mod}(\mathcal{C})$ has enough projective and injective
objects. For any object $C\in \mathcal{C}$, the representable functor $%
\mathcal{C}(\;,C)$ is projective, the arbitrary sums of representable functors
are projective, and any object $M\in \mathrm{Mod}(\mathcal{C})$ is covered by 
an epimorphism $\coprod_{i}\mathcal{C}(\;,C_{i})\rightarrow M\rightarrow 0$.  We say that an object $M$ in $\mathrm{Mod}(C)$ is finitely generated if
there exists an epimorphism $\coprod_{i\in I}\mathcal{C}(\;,C_{i})%
\rightarrow M\rightarrow 0$, with $I$ a finite set.

An object $P$ in $\mathrm{Mod}(\mathcal{C})$ is projective (finitely
generated projective) if and only if $P$ is a summand of $\coprod_{i\in I}
\mathcal{C}(\;,C_{i})$ for a (finite) family  $\{C_{i}\}_{i\in I}$ of objects
in $\mathcal{C}$. The subcategory $\mathfrak{p}(\mathcal{C})$ of $\mathrm{Mod%
}(\mathcal{C})$, of all finitely generated projective objects, is a
skeletally small additive category in which idempotents split; morever, the functor $
P:\mathcal{C}\rightarrow \mathfrak{p}(\mathcal{C})$, $P(C)=\mathcal{C}(\;,C)$
 is fully faithful and induces by restriction $\mathrm{res}:\mathrm{Mod}(%
\mathfrak{p}(\mathcal{C}))\rightarrow \mathrm{Mod}(\mathcal{C})$ an
equivalence of categories. For this reason, we may assume that our
categories are skeletally small additive categories in which idempotents
split; they are then called annuli varieties in \cite{Auslander}.

\begin{definition}
\cite{AQM} Given a preadditive skeletally small category $\mathcal{C}$, we
say $\mathcal{C}$ has pseudokernels, if given a map $f:C_{1}\rightarrow
C_{0} $ there exists a map $g:C_{2}\rightarrow C_{1}$ such that the
sequence $\mathcal{C}(\;,C_{2})\overset{(\;,g)}{\rightarrow }\mathcal{C}%
(\;,C_{1})\overset{(\;,f)}{\rightarrow }\mathcal{C}(\;,C_{0})$ is exact.
\end{definition}

A functor $M$ is finitely presented if there exists an exact sequence $%
\mathcal{C}(\;,C_{1})\rightarrow \mathcal{C}(\;,C_{0})\rightarrow
M\rightarrow 0$. We denote by $\mathrm{mod}(\mathcal{C})$ the full
subcategory of $\mathrm{Mod}(\mathcal{C})$ consisting of finitely presented
functors. It was proved in \cite{AQM} that $\mathrm{mod}(\mathcal{C})$ is abelian if
and only if $\mathcal{C}$ has pseudokernels.

We will  indistinctly say that $M$ is an object of $\mathrm{Mod}(\mathcal{C})$
or that $M$ is a $\mathcal{C}$-module. A representable functor $\mathcal{C}%
(\;,C)$ will  sometimes be denoted by $(\;,C).$

\subsection{ Krull-Schmidt and Dualizing Categories}

We start by giving some definitions from \cite{Dualizing}.

\begin{definition}
Let $R$ be a commutative Artin ring. A $R$-category $\mathcal{C}$, is a preadditive
 category such that $\mathcal{C}(C_{1},C_{2})$ is a $R$-module and, the
composition is $R$-bilinear. Under these conditions $\mathrm{\mathrm{Mod}}(%
\mathcal{C})$ is a $R$-category which we identify with the category of
functors $(\mathcal{C}^{op},\mathrm{Mod}(R))$.

A $R$-category $\mathcal{C}$ is $\mathrm{Hom}$-\textbf{finite} if for each
pair of objects $C_{1},C_{2}$ in $\mathcal{C}$ the $R$-module $\mathcal{C}%
(C_{1},C_{2})$ is finitely generated. We denote by $(\mathcal{C}^{op},%
\mathrm{mod}(R))$ the full subcategory of $(\mathcal{C}^{op},\mathrm{%
\mathrm{Mod}}(R))$ consisting of the $\mathcal{C}$-modules such that for
every $C$ in $\mathcal{C}$ the $R$-module $M(C)$ is finitely generated. The
category $(\mathcal{C}^{op},\mathrm{mod}(R))$ is abelian, and the inclusion $(%
\mathcal{C}^{op},\mathrm{mod}(R))\rightarrow (\mathcal{C}^{op},\mathrm{%
\mathrm{Mod}}(R)) $ is exact.
\end{definition}

The category $\mathrm{mod}(C)$ is a full subcategory of $(\mathcal{C}^{op},%
\mathrm{mod}(R))$. The functors $D:(\mathcal{C}^{op},\mathrm{mod}%
(R))\rightarrow (\mathcal{C},\mathrm{mod}(R))$ and $D:(\mathcal{C},\mathrm{%
mod}(R))\rightarrow (\mathcal{C}^{op},\mathrm{mod}(R))$ are defined as
follows: for any $C$ in $\mathcal{C}$, $D(M)(C)=\mathrm{Hom}%
_{R}(M(C),I(R/r)) $, with $r$ the Jacobson radical of $R,$ and $I(R/r)$ is
the injective envelope of $R/r.$ The functor $D$ defines a duality between $(%
\mathcal{C},\mathrm{mod}(R))$ and $(\mathcal{C}^{op},\mathrm{mod}(R))$. If $%
\mathcal{C}$ is a $\mathrm{Hom}$-finite $R$-category and $M$ is in $\mathrm{%
mod}(\mathcal{C})$, then $M(C)$ is a finitely generated $R$-module and is
therefore in $\mathrm{mod}(R)$.

\begin{definition}
A $\mathrm{Hom}$-finite \  $R$-category\ $\mathcal{C}$ is \textbf{dualizing,}
\ if\ the functor\ $D:(\mathcal{C}^{op},\mathrm{mod}(R))\rightarrow (%
\mathcal{C},\mathrm{mod}(R))$ induces a duality between the categories $%
\mathrm{mod}(\mathcal{C})$ and $\mathrm{mod}(\mathcal{C}^{op}).$
\end{definition}

It is clear from the definition that for dualizing categories $\mathrm{mod}(%
\mathcal{C})$ has enough injectives. Morever, $\mathcal{C}$ has pseudokerneles,
and therefore $\mathrm{mod}(\mathcal{C})$ is an abelian category  by Theorem 2.4 in \cite{Dualizing}.

To finish, we recall the following definition.

\begin{definition}
An additive category $\mathcal{C}$ is \textbf{Krull-Schmidt}, if every
object in $\mathcal{C}$ decomposes in a finite sum of objects whose
endomorphism ring is local.
\end{definition}

Assume $\mathcal{C}$ is  Krull-Schmidt, then for all $C\in\mathcal{C}$ the ring of endomorphisms $\mathrm{End}(C)$ is semiperfect. In other, every
 $\mathrm{End}(C)$-module finitely generated has a projective cover and therefore every finitely presented $\mathcal{C}$-module has a minimal projective
  presentation by Theorem 1 and Theorem 2 in \cite{MVO2}.

\subsection{Change of categories}

According to [A], there exists a unique up to isomorphism functor $-\otimes
_{\mathcal{C}}-:\mathrm{Mod}(\mathcal{C}^{op})\times \mathrm{Mod}(\mathcal{C}%
)\rightarrow \mathbf{Ab,}$ called the \textbf{tensor product}, with the
following properties:

\begin{itemize}
\item[(a)]

\begin{itemize}
\item[(i)] For each $\mathcal{C}$-module $N$, the functor $\otimes _{%
\mathcal{C}}N:\mathrm{Mod}(\mathcal{C}^{op})\rightarrow \mathbf{Ab}$ given
by $(\otimes _{\mathcal{C}}N)(M)=M\otimes _{\mathcal{C}}N$ is right exact.

\item[(ii)] For each $\mathcal{C}^{op}$-module $M$, the functor $M\otimes _{%
\mathcal{C}}:\mathrm{Mod}(\mathcal{C})\rightarrow \mathbf{Ab}$ given by $%
(M\otimes _{\mathcal{C}})(N)=M\otimes _{\mathcal{C}}N$ is right exact.
\end{itemize}

\item[(b)] The functors $M\otimes _{\mathcal{C}}-$ and $-\otimes _{\mathcal{C%
}}N$ preserve arbitrary sums.

\item[(c)] For each object $C$ in $\mathcal{C}$ $M\otimes _{\mathcal{C}%
}(\;,C)=M(C)$ and $(C,\;\;)\otimes _{\mathcal{C}}N=N(C)$.
\end{itemize}

Given a full subcategory $\mathcal{C}^{\prime }$ of $\mathcal{C}$, the
restriction $\mathrm{res}:\mathrm{Mod}(\mathcal{C})\rightarrow \mathrm{Mod}(%
\mathcal{C}^{\prime })$ has a right adjoint, also  called  the tensor product,
and denoted by $\mathcal{C}\otimes _{\mathcal{C}^{\prime }}:\mathrm{Mod}(%
\mathcal{C}^{\prime })\rightarrow \mathrm{Mod}(\mathcal{C}).$ This functor
is defined by $(\mathcal{C}\otimes _{\mathcal{C}^{\prime }}M)(C)=(C,\;\;)|_{%
\mathcal{C}^{\prime }}\otimes _{\mathcal{C}^{\prime }}M$, for any $M$ in $%
\mathrm{Mod}(\mathcal{C}^{\prime })$ and $C$ in $\mathcal{C}$. The following
proposition is proved in [A Prop. 3.1].

\begin{proposition}
\label{cap1.9} Let $\mathcal{C}^{\prime }$ be a full subcategory of $%
\mathcal{C}$. The functor $\mathcal{C}\otimes _{\mathcal{C}^{\prime }}:%
\mathrm{Mod}(\mathcal{C}^{\prime })\rightarrow \mathrm{Mod}(\mathcal{C})$
satisfies the following conditions:

\begin{itemize}
\item[(a)] $\mathcal{C}\otimes _{\mathcal{C}^{\prime }}$ is right exact and
preserves arbitrary sums.

\item[(b)] The composition $\mathrm{Mod}(\mathcal{C}^{\prime })%
\xrightarrow{\mathcal
C\otimes_{\mathcal C'}}\mathrm{Mod}(\mathcal{C})\xrightarrow{\mathrm{res}}%
\mathrm{Mod}(C^{\prime })$ is the identity in $\mathrm{Mod}(\mathcal{C}%
^{\prime }).$

\item[(c)] For each object $C^{\prime }$ in $\mathcal{C}^{\prime }$, $%
\mathcal{C}\otimes _{\mathcal{C}^{\prime }}\mathcal{C}^{\prime
}(\;\;,C^{\prime })=\mathcal{C}(\;\;,C^{\prime })$.

\item[(d)] $\mathcal{C}\otimes _{\mathcal{C}^{\prime }}$ is fully faithful.

\item[(e)] $\mathcal{C}\otimes _{\mathcal{C}^{\prime }}$ preserves
projective objects.
\end{itemize}
\end{proposition}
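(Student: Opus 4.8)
The plan is to deduce all five items from the pointwise formula $(\mathcal{C}\otimes_{\mathcal{C}'}M)(C)=(C,\;)|_{\mathcal{C}'}\otimes_{\mathcal{C}'}M$ recalled above, together with the properties (a), (b), (c) of the absolute tensor product $\otimes_{\mathcal{C}'}$, and the standard fact that kernels, cokernels and coproducts in a module category over a small category are computed objectwise. Note also that $\mathcal{C}\otimes_{\mathcal{C}'}$ is additive, being built from an $\mathbf{Ab}$-bilinear pairing.

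First I would settle (a) and (c). For (a), fix $C\in\mathcal{C}$: by property (a)(ii) of $\otimes_{\mathcal{C}'}$ the functor $(C,\;)|_{\mathcal{C}'}\otimes_{\mathcal{C}'}-$ is right exact, and by property (b) it preserves arbitrary coproducts; evaluating at each object then shows that $\mathcal{C}\otimes_{\mathcal{C}'}$ is right exact and preserves coproducts. For (c), evaluate $(\mathcal{C}\otimes_{\mathcal{C}'}\mathcal{C}'(\;,C'))(C)=(C,\;)|_{\mathcal{C}'}\otimes_{\mathcal{C}'}\mathcal{C}'(\;,C')$, which by property (c) of $\otimes_{\mathcal{C}'}$ equals $(C,\;)|_{\mathcal{C}'}(C')=\mathcal{C}(C,C')$; this identification is natural in $C$, so $\mathcal{C}\otimes_{\mathcal{C}'}\mathcal{C}'(\;,C')\cong\mathcal{C}(\;,C')$.

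Next, (b) and (e). For (b), take $C'\in\mathcal{C}'$; since $\mathcal{C}'$ is a full subcategory, $(C',\;)|_{\mathcal{C}'}=\mathcal{C}'(C',\;)$, so property (c) of $\otimes_{\mathcal{C}'}$ gives $(\mathcal{C}\otimes_{\mathcal{C}'}M)(C')=\mathcal{C}'(C',\;)\otimes_{\mathcal{C}'}M=M(C')$, naturally in $C'$, i.e. $\mathrm{res}(\mathcal{C}\otimes_{\mathcal{C}'}M)\cong M$. (One may also derive (b) formally from (a), (c) and the fact that every $\mathcal{C}'$-module is a cokernel of a map between coproducts of representables, using $\mathcal{C}(\;,C')|_{\mathcal{C}'}=\mathcal{C}'(\;,C')$.) For (e), a projective of $\mathrm{Mod}(\mathcal{C}')$ is a direct summand of some $\coprod_{i}\mathcal{C}'(\;,C'_{i})$; by additivity and (a) its image is a direct summand of $\coprod_{i}\mathcal{C}\otimes_{\mathcal{C}'}\mathcal{C}'(\;,C'_{i})$, which by (c) equals $\coprod_{i}\mathcal{C}(\;,C'_{i})$ and is projective in $\mathrm{Mod}(\mathcal{C})$; hence so is the summand.

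Finally (d). For $M,N\in\mathrm{Mod}(\mathcal{C}')$ I would pick a presentation $\coprod_{j}\mathcal{C}'(\;,C'_{j})\to\coprod_{i}\mathcal{C}'(\;,C'_{i})\to M\to 0$, apply $\mathcal{C}\otimes_{\mathcal{C}'}$ and invoke (a) and (c) to obtain the exact sequence $\coprod_{j}\mathcal{C}(\;,C'_{j})\to\coprod_{i}\mathcal{C}(\;,C'_{i})\to\mathcal{C}\otimes_{\mathcal{C}'}M\to 0$. Applying $\mathrm{Hom}_{\mathcal{C}'}(-,N)$ to the first and $\mathrm{Hom}_{\mathcal{C}}(-,\mathcal{C}\otimes_{\mathcal{C}'}N)$ to the second, left exactness of $\mathrm{Hom}$ produces two exact sequences whose representable terms agree, since by Yoneda and (b) one has $\mathrm{Hom}_{\mathcal{C}'}(\mathcal{C}'(\;,C'),N)=N(C')=(\mathcal{C}\otimes_{\mathcal{C}'}N)(C')=\mathrm{Hom}_{\mathcal{C}}(\mathcal{C}(\;,C'),\mathcal{C}\otimes_{\mathcal{C}'}N)$; comparing the sequences yields $\mathrm{Hom}_{\mathcal{C}'}(M,N)\cong\mathrm{Hom}_{\mathcal{C}}(\mathcal{C}\otimes_{\mathcal{C}'}M,\mathcal{C}\otimes_{\mathcal{C}'}N)$. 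The main obstacle is the bookkeeping needed to check that this isomorphism is precisely the map induced by the functor $\mathcal{C}\otimes_{\mathcal{C}'}$ — equivalently, that the isomorphism of (b) is the (co)unit of the adjunction between $\mathrm{res}$ and $\mathcal{C}\otimes_{\mathcal{C}'}$ — which is a naturality chase through the construction of the tensor product; granting this, full faithfulness follows at once, and the remaining items are routine manipulations of the defining properties of $\otimes_{\mathcal{C}'}$.
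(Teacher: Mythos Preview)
The paper does not supply its own proof of this proposition; it is quoted from Auslander [A, Prop.~3.1] and stated without argument. Your proof is correct and follows the standard line one would expect in Auslander's original.

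Two remarks. First, the paper calls $\mathcal{C}\otimes_{\mathcal{C}'}$ a \emph{right} adjoint to restriction, but the properties listed in (a) (right exactness, preservation of coproducts) are those of a \emph{left} adjoint, and indeed extension of scalars is left adjoint to restriction; you implicitly work with this correctly. Second, for (d) the cleanest way to close the gap you flag is exactly the adjunction route you mention parenthetically: a left adjoint is fully faithful if and only if the unit of the adjunction is an isomorphism, and (b) says precisely that the unit $M\to\mathrm{res}(\mathcal{C}\otimes_{\mathcal{C}'}M)$ is an isomorphism. So once the adjunction is established, (d) follows formally from (b) without the presentation-and-Yoneda comparison; your longer argument is also fine, but the naturality bookkeeping you worry about is subsumed by the general fact about adjunctions.
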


The functor $M$ in $\mathrm{Mod}(\mathcal{C})$ is called projectively
presented over $\mathcal{C}^{\prime },$ if there exists an exact sequence $%
\coprod_{i\in I}\mathcal{C}(\;\;,C_{i}^{\prime })\rightarrow \coprod_{j\in J}%
\mathcal{C}(\;\;,C_{j}^{\prime })\rightarrow M\rightarrow 0$, with $%
C_{i}^{\prime },C_{j}^{\prime }\in \mathcal{C}^{\prime }$. The category $%
\mathcal{C}\otimes _{\mathcal{C}^{\prime }}\mathrm{Mod}(\mathcal{C}^{\prime
})$ is the subcategory of $\mathrm{Mod}(\mathcal{C})$ with  objects that are the
functors projectively presented over $\mathcal{C}^{\prime }$. The functors $%
\mathrm{res}$ and $\mathcal{C}\otimes _{\mathcal{C}^{\prime }}$ induce an
equivalence between $\mathrm{Mod}(\mathcal{C}^{\prime })$ and $\mathcal{C}%
\otimes _{\mathcal{C}^{\prime }}\mathrm{Mod}(\mathcal{C}^{\prime })$.

\subsection{The radical of a category}

The notion of the Jacobson radical of a category was introduced in [M] and
[A], it is defined in the following way:

\begin{definition}
The Jacobson radical of $\mathcal{C}$, $\mathrm{rad}_{\mathcal{C}}(\;,\;)$
is a subbifunctor of $\mathrm{Hom}_{\mathcal{C}}(\;,\;)$ defined in objects
as $\mathrm{rad}_{\mathcal{C}}(X,Y)=\{f\in \mathrm{Hom}_{\mathcal{C}%
}(X,Y)\mid $for any map $g:Y\rightarrow X$, $1-gf$ is invertible \}.

If $M$ is a $\mathcal{C}$-module, then we denote by $\mathrm{rad}M$ the
intersection of all maximal subfunctors of $M$.
\end{definition}

\begin{proposition}
\label{cap1.21}\emph{[A], [BR], [M]} Let $\mathcal{C}$ be an additive
category and $\mathrm{rad}_{\mathcal{C}}(\;,\;)$ the Jacobson radical of $%
\mathcal{C}$. We then have the following:

\begin{itemize}
\item[(a)] For every object $C$ in $\mathcal{C}$, $\mathrm{rad}_{\mathcal{C}%
}(C,C)$ is just the Jacobson radical of $\mathrm{End}_{\mathcal{C}}(C).$

\item[(b)] If $C$ and $C^{\prime }$ are indecomposable objects in $\mathcal{C%
}$, then  $\mathrm{rad}_{\mathcal{C}}(C,C^{\prime })$ consists of
all non isomorphisms from $C$ to $C^{\prime }$.

\item[(c)] For every object $C$ in $\mathcal{C}$, $\mathrm{rad}_{\mathcal{C}%
}(C,\;\;)=\mathrm{rad}\mathcal{C}(C,\;\;)$ and $\mathrm{rad}_{\mathcal{C}%
}(\;\;,C)=\mathrm{rad}\mathcal{C}(\;\;,C)$.

\item[(d)] For every pair of objects $C$ and $C^{\prime }$ in $\mathcal{C}$,
$\mathrm{rad}_{\mathcal{C}}(C^{\prime },\;\;)(C)=\mathrm{rad}_{\mathcal{C}%
}(\;\;,C)(C^{\prime })$.
\end{itemize}
\end{proposition}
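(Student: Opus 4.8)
The statement to prove is Proposition~\ref{cap1.21}, which collects four standard facts about the Jacobson radical of an additive category. Let me sketch a proof plan.

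The plan is to prove the four parts in a logical order, starting from the definition and using the behavior of idempotents and invertibility. First I would establish part (a): unwinding the definition, $\mathrm{rad}_{\mathcal{C}}(C,C) = \{f \in \mathrm{End}_{\mathcal{C}}(C) \mid 1 - gf \text{ is invertible for all } g \in \mathrm{End}_{\mathcal{C}}(C)\}$, and this is precisely the classical characterization of the Jacobson radical of the ring $\mathrm{End}_{\mathcal{C}}(C)$ as the set of elements $f$ such that $1-gf$ is left invertible for all $g$ (equivalently a unit, since in fact one shows $1-gf$ has a two-sided inverse). I would cite the standard ring-theoretic lemma that $\mathrm{rad}(R) = \{r \in R \mid 1 - sr \in R^{\times} \text{ for all } s \in R\}$, so part (a) is immediate once the definition is read correctly.

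Next I would do part (b): if $C, C'$ are indecomposable, then $\mathrm{End}_{\mathcal{C}}(C)$ and $\mathrm{End}_{\mathcal{C}}(C')$ are local rings (this is where indecomposability in an additive category with the relevant finiteness is used, or it is taken as hypothesis on the objects). One inclusion is easy: an isomorphism $f: C \to C'$ cannot lie in the radical, since taking $g = f^{-1}$ gives $1 - gf = 0$, not invertible. For the converse, suppose $f: C \to C'$ is not an isomorphism; I must show $1 - gf$ is invertible for every $g: C' \to C$. If $1-gf$ were not invertible in the local ring $\mathrm{End}(C)$, then $gf$ would be a unit, hence $f$ would be a split monomorphism; combined with $C'$ indecomposable and $gf$ invertible one deduces $f$ is an isomorphism, a contradiction. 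This symmetry argument (using that $fg$ is then also forced to be invertible in $\mathrm{End}(C')$) gives the result.

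For part (c) I would argue that $\mathrm{rad}_{\mathcal{C}}(C,-)$, as a subfunctor of the representable $\mathcal{C}(C,-)$, is contained in every maximal subfunctor, and conversely that the quotient $\mathcal{C}(C,-)/\mathrm{rad}_{\mathcal{C}}(C,-)$ is semisimple (a sum of simple functors), so that $\mathrm{rad}_{\mathcal{C}}(C,-) = \mathrm{rad}\,\mathcal{C}(C,-)$; the key input is that evaluation-wise the radical subfunctor consists of the non-split or non-isomorphism maps, matched against the description of maximal subfunctors of representables via Yoneda. Part (d) is then a direct computation: both sides equal $\mathrm{rad}_{\mathcal{C}}(C', C)$ by definition of how a subbifunctor is evaluated, since $\mathrm{rad}_{\mathcal{C}}(C',-)(C) = \mathrm{rad}_{\mathcal{C}}(C',C) = \mathrm{rad}_{\mathcal{C}}(-,C)(C')$, so this part is essentially notational once one has (c).

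The main obstacle will be part (b) in the direction ``non-isomorphism implies in the radical'': one must carefully use that in an additive Krull--Schmidt-type setting the endomorphism rings of indecomposables are local, and then chase the consequences of $gf$ being a unit to force $f$ to split and ultimately to be invertible — the subtlety is ruling out the case where $f$ is a proper split mono or epi, which is exactly where indecomposability of both $C$ and $C'$ enters. Since these are all classical results (with references [A], [BR], [M] already cited in the statement), I would keep the exposition brief, citing those sources for the finer points and only spelling out the short arguments above.
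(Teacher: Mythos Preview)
The paper does not supply its own proof of this proposition: it is stated as a quotation of known results with the citations \emph{[A], [BR], [M]} and no argument is given. Your sketch is the standard route found in those references and is correct; in particular your handling of (a) via the ring-theoretic characterization of the Jacobson radical, of (b) via locality of endomorphism rings of indecomposables, and of (d) as a tautology once the bifunctor is evaluated, all match the classical arguments. The only point worth flagging is the one you already noted: part (b) as stated for a bare additive category implicitly requires that indecomposable objects have local endomorphism rings (e.g.\ a Krull--Schmidt hypothesis), which is exactly the standing assumption in the parts of the paper where the proposition is used.
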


\begin{definition}
By an ideal of the additive category $\mathcal{C}$, we understand a subbifunctor of $\mathrm{Hom}_{\mathcal{C}}(\;,\;).$
\end{definition}

Given two ideals $I_{1}$ and $I_{2}$ of $\mathcal{C}$, we define $%
I_{1}I_{2}$ as follows: $f\in I_{1}I_{2}(C_{1},C_{3}),$ if and only if $f$
is a finite sum of morphisms $C_{1}\xrightarrow{h}C_{2}\xrightarrow{g}C_{3}$%
, with $h\in I_{1}(C_{1},C_{2})$ and $g\in I_{2}(C_{2},C_{3})$.


\section{Quasi-hereditary Categories}

In \cite{DR}  quasi-hereditary rings are  introduced in terms of filtrations  of idempotent ideals; meanwhile in  [Dlab, A.2.6 ] quasi-hereditary algebras are defined in terms of filtrations into ideales determined by traces of certain projective modules.
\bigskip

  In this work we assume that $\mathcal{C}$ is a  $\mathrm{Hom}$-finite $K$-category, with an algebraically closed field $K$.

Here we introduce the concept of quasi-hereditary category by bringing the concepts mentioned above, bearing in mind generalize some classical results on quasi-hereditary algebras. To achieve this, we begin studying certain types of idempotent ideals in a $ K $-category $\mathcal{C}$. We then introduce the concept of heredity ideal (see [DR]), so we can define quasi-hereditary categories  through a  filtration of the  bifunctor $\mathcal { C } (- ,?) $ into  ideals, which corresponds to a filtration of the category $\mathcal C $ into subcategories.
\bigskip

In the main theorem of this section, we give conditions on the filtration of a category in subcategories to make this a quasi-hereditary category. 
\bigskip

\subsection{The ideal $I_{\mathcal{B}}(-,?)$}
We start  this section by studying an important ideal that will be very important in the study of quasi-hereditary categories.
\bigskip 

 Let $\mathcal{B}$ be a full subcategory of $\mathcal{C}$. We denote by  $I_\mathcal{B}(C,C\rq{})$  the subgroup  of $\mathcal{C}(C,C\rq{})$ consisting of morphisms which  factor through  some  object of $\mathcal{B}$. Let 
$E\in\mathcal{B}$ and $k_E=\dim_K\mathcal{C}(E,X)$. If  $\{f_1,\ldots,f_{k_E}\}$ is a $K$-basis for $\mathcal{C}(E,X)$, then we can define
a map  for all $C\in\mathcal{C}$:
\begin{eqnarray*}
 \varphi^E_C&:&\mathcal{C}(C,E)^{k_E}\rightarrow\mathcal{C}(C,X)\\
             & &(g_1,\ldots,g_{k_E})\mapsto\sum_{i=1}^{k_E}f_ig_i  
\end{eqnarray*}

We can readily ascertain  that $\varphi^E:\mathcal{C}(-,E)^{k_E}\rightarrow\mathcal{C}(-,X)$ is a morphism of 
$\mathcal{C}$-modules. Therefore, we have an induced morphism 
$(\varphi^E)_{E\in \mathcal{B}}:\coprod_{E\in \mathcal{B}}\mathcal{C}(-,E)^{k_E}\rightarrow\mathcal{C}(-,X)$.
\bigskip

In the next lemma we study the relationship between the ideal $I_{\mathcal{B}}(-,?)$ and the trace $\mathrm{Tr}_{\{\mathcal{C}(-,E)\}_{E\in\mathcal{B}}}\mathcal{C}(-,X)$ of the family of projective $\mathcal{C}$-modules $\{\mathcal{C}(-,E)_{E\in\mathcal{B}}\}$ in $\mathcal{C}(-,X)$.

\begin{lemma}\label{IdempotentIdeal}
 Let $(\varphi^E)_{E\in \mathcal{B}}:\coprod_{E\in \mathcal{B}}\mathcal{C}(-,E)^{k_E}\rightarrow\mathcal{C}(-,X)$ the morphism defined as above. Then
 $\mathrm{Im}((\varphi^E)_{E\in \mathcal{B}})=\mathrm{Tr}_{\{\mathcal{C}(-,E)\}_{E\in\mathcal{B}}}\mathcal{C}(-,X)=I_{\mathcal{B}}(-,X)$.
\end{lemma}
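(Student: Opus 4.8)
The plan is to prove the two equalities separately, working with three descriptions of the same submodule of $\mathcal{C}(-,X)$: the image of $(\varphi^E)_{E\in\mathcal{B}}$, the trace $\mathrm{Tr}$, and the ideal $I_{\mathcal{B}}(-,X)$. Recall that the trace $\mathrm{Tr}_{\{\mathcal{C}(-,E)\}_{E\in\mathcal{B}}}\mathcal{C}(-,X)$ is by definition the sum, over all $E\in\mathcal{B}$ and all morphisms of $\mathcal{C}$-modules $\alpha:\mathcal{C}(-,E)\to\mathcal{C}(-,X)$, of the images $\mathrm{Im}(\alpha)$. So the equality $\mathrm{Im}((\varphi^E)_{E\in\mathcal{B}})=\mathrm{Tr}$ will follow from Yoneda: every $\mathcal{C}$-module map $\alpha:\mathcal{C}(-,E)\to\mathcal{C}(-,X)$ is, by the Yoneda lemma, given by composition with a unique element $f\in\mathcal{C}(E,X)$, hence $\mathrm{Im}(\alpha)\subseteq\mathrm{Im}(\varphi^E)$ because $f$ is a $K$-linear combination of the chosen basis $\{f_1,\dots,f_{k_E}\}$; conversely each component $\varphi^E$ is, by construction, a sum of such Yoneda maps, so $\mathrm{Im}((\varphi^E)_{E\in\mathcal{B}})\subseteq\mathrm{Tr}$. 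Taking images componentwise and summing gives the equality.

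Next I would prove $\mathrm{Tr}_{\{\mathcal{C}(-,E)\}_{E\in\mathcal{B}}}\mathcal{C}(-,X)=I_{\mathcal{B}}(-,X)$, evaluated at an arbitrary object $C\in\mathcal{C}$. For the inclusion $\subseteq$: an element of $\mathrm{Im}((\varphi^E)_C)$ is a finite sum $\sum_i f_i g_i$ with $g_i\in\mathcal{C}(C,E)$ and $f_i\in\mathcal{C}(E,X)$ for various $E\in\mathcal{B}$; such a morphism $C\to X$ visibly factors through a finite coproduct of objects of $\mathcal{B}$ (equivalently, after collecting terms for each $E$, through $E^{k_E}$), hence lies in $I_{\mathcal{B}}(C,X)$ since $\mathcal{B}$, being a full additive subcategory, is closed under finite sums. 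For the inclusion $\supseteq$: if $h\in I_{\mathcal{B}}(C,X)$ then $h$ factors as $C\xrightarrow{u}B\xrightarrow{v}X$ with $B\in\mathcal{B}$; then $v\in\mathcal{C}(B,X)$, and writing $v$ in terms of the chosen $K$-basis of $\mathcal{C}(B,X)$ exhibits $h=vu$ as an element of $\mathrm{Im}(\varphi^B_C)$. Here I am using that $\mathcal{C}$ is $\mathrm{Hom}$-finite over $K$, so each $k_E=\dim_K\mathcal{C}(E,X)$ is finite and the bases exist.

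The main obstacle — really the only point requiring care — is the bookkeeping between "factors through a single object of $\mathcal{B}$" and "is a finite sum of maps each factoring through some object of $\mathcal{B}$," and the fact that $X$ itself need not lie in $\mathcal{B}$ (indeed $X$ is a fixed external object; note the statement as written suppresses the dependence of $k_E$ on $X$, which I would silently read as $k_E=k_E^X=\dim_K\mathcal{C}(E,X)$). Since $\mathcal{B}$ is a full subcategory of the additive category $\mathcal{C}$ and we are free to assume $\mathcal{B}$ additive (it is the additive subcategory generated by its indecomposables in the intended application), finite coproducts of objects of $\mathcal{B}$ again lie in $\mathcal{B}$, so "factors through a finite coproduct of objects of $\mathcal{B}$" and "factors through an object of $\mathcal{B}$" coincide, and the two formulations of $I_{\mathcal{B}}$ agree. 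I would state this closure remark explicitly once and then the three-way equality drops out by combining the two paragraphs above, evaluated at each $C$ and natural in $C$.
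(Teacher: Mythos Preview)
Your proposal is correct and follows essentially the same approach as the paper's proof: both establish $\mathrm{Im}(\varphi^E)=\mathrm{Tr}_{\mathcal{C}(-,E)}\mathcal{C}(-,X)$ via Yoneda's lemma together with the observation that any $f\in\mathcal{C}(E,X)$ is a $K$-linear combination of the chosen basis $\{f_1,\dots,f_{k_E}\}$, and then identify this with $I_{\mathcal{B}}(-,X)$ by unwinding what it means for a morphism to factor through an object of $\mathcal{B}$. Your version is in fact more careful on one point the paper leaves implicit: you make explicit the closure of $\mathcal{B}$ under finite coproducts needed so that a finite sum of morphisms each factoring through some $E\in\mathcal{B}$ again factors through a single object of $\mathcal{B}$ (the paper's part (c) simply asserts the equivalence without comment).
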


\begin{proof}
 \textbf{a)} For all $E\in\mathcal{B}$ we have $\mathrm{Im} (\varphi^E)=\sum_{\hat{f}\in\mathcal{C}(E,X)}\mathrm{Im}(\mathcal{C}(-,\hat{f}))$. Indeed
 let $\{f_1,\ldots f_{k_E}\}$ be a $K$-basis for $\mathcal{C}(E,X)$; we then have
 \[
  \mathrm{Im}(\varphi^E_C)=\{\sum_{i=1}^{k_E}f_ig_i|g_i\in\mathcal{C}(C,E)\}
 \]
 On the other hand, we have 
 \begin{eqnarray*}
 \sum_{\hat{f}\in\mathcal{C}(E,X)}\mathrm{Im}(\mathcal{C}(-,\hat{f})_C)
 & =&\sum_{\hat{f}\in\mathcal{C}(E,X)}\mathrm{Im}(\mathcal{C}(C,\hat{f}))\\
& =&\{\sum_{s=1}^m\hat{f}_sg_s|\hat{f_s}\in\mathcal{C}(E,X),g_s\in\mathcal{C}(C,E), m\ge 0\}.
 \end{eqnarray*} 
 
 It is clear that $\mathrm{Im}(\varphi^E)\subset\sum_{\hat{f}\in\mathcal{C}(E,X)}\mathrm{Im}(\mathcal{C}(-,\hat{f}))$.
 To prove the other inclusion, let $\hat{f}_s=\sum_{i=1}^{k_E}c_{is}f_i$, $c_{is}\in K$, a map in $\mathcal{C}(E,S)$. Then, it follows that
 \begin{eqnarray*}
  \sum_{s=1}^m\hat{f}_sg_s=\sum_{s=1}^m(\sum_{i=1}^{k_E}c_{is}f_i)g_s=\sum_{i=1}^{k_E}f_i(\sum_{s=1}^m c_{is}g_s).
 \end{eqnarray*}
Thus,  $\sum_{\hat{f}\in\mathcal{C}(E,X)}\mathrm{Im}(\mathcal{C}(-,\hat{f}))\subset \mathrm{Im}(\varphi ^E)$.

\textbf{b)} For all $E\in\mathcal{B}$, we have $\mathrm{Im}(\varphi^E)=\mathrm{Tr}_{(-,E)}\mathcal{C}(-,X)$.  By Yoneda's lemma it follows that  $$\mathrm{Tr}_{(-,E)}\mathcal{C}(-,X)=
\sum_{\psi\in (\mathcal{C}(-,E),\mathcal{C}(-,X))}\mathrm{Im}(\psi)=
\sum_{\hat{f}\in \mathcal{C}(E,X)}\mathrm{Im}(\mathcal{C}(-,\hat{f}))$$.

\textbf{c)} Finally, observe that $f\in \mathrm{Im}((\varphi^E)_{E\in \mathcal{B}})(C)$ if and only if $f\in I_{\mathcal{B}}(C,X)$.
\end{proof}

\subsection{Heredity ideals and quasi-hereditary categories}

In order to introduce the concept of quasi-hereditary  category we  first  introduce the concept of heredity ideal \cite{DR}.

\begin{definition}
 A two-sided ideal $I$ in $\mathcal{C}$ is called  heredity if the following holds:
 \begin{itemize}
  \item[(i)] $I^2=I$, i.e, $I$ is an idempotent ideal;
  \item [(ii)]$I\mathrm{rad}\mathcal{C}(-,?)I=0$;
   \item[(iii)]$I(-,X)$ is a finitely generated projective $\mathcal{C}$-module for all $X\in\mathcal{C}$.
 \end{itemize}
\end{definition}

Let $\mathcal{B}\subset\mathcal{C}$ a full subcategory; it is clear that $I_{\mathcal{B}}^2=I_{\mathcal{B}}$. Then, we want to know when  the ideal $I_{\mathcal{B}}$ is heredity. The following lemma says  when it holds.

\begin{lemma}\label{IdempotentIdeal2}
Let $\mathcal{B}$ be  a full subcategory of $\mathcal{C}$. Then,  the ideal   $I_{\mathcal{B}}$ is heredity if and only if the following holds:
 \begin{itemize}
  \item [(i)] $\mathrm{rad}\mathcal{C}(E_1,E_2)=0$ for all pairs of non-isomorphic objects $E_1,E_2\in\mathcal{B}$;
   \item[(ii)]$I_{\mathcal{B}}(-,X)$ is a  finite direct sum of projective $\mathcal{C}$-modules of the form $\mathcal{C}(-,E)$, $E\in\mathcal{B}$, for all
   $X\in\mathcal{C}$.
   \end{itemize}
\end{lemma}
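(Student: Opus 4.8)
The strategy is to show that the two conditions (i) and (ii) in the lemma are, together, equivalent to the combination of the three defining conditions of a heredity ideal, making crucial use of Lemma \ref{IdempotentIdeal} which identifies $I_{\mathcal{B}}(-,X)$ with the trace $\mathrm{Tr}_{\{\mathcal{C}(-,E)\}_{E\in\mathcal{B}}}\mathcal{C}(-,X)$ and with the image of the map $(\varphi^E)_{E\in\mathcal{B}}$. Condition (i) of the heredity definition, $I_{\mathcal{B}}^2 = I_{\mathcal{B}}$, is automatic (as remarked in the text, since a morphism factoring through $\mathcal{B}$ can be factored through $\mathcal{B}$ twice by inserting an identity), so the content is really the equivalence of the remaining two heredity conditions with (i) and (ii).

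First I would address condition (iii) of the heredity definition versus condition (ii) of the lemma. The implication from lemma-(ii) to heredity-(iii) is immediate, since a finite direct sum of representables $\mathcal{C}(-,E)$ is finitely generated projective. For the converse, suppose $I_{\mathcal{B}}(-,X)$ is finitely generated projective. Since $\mathcal{C}$ is Hom-finite over an algebraically closed field $K$, one can choose finitely many $E_1,\dots,E_n\in\mathcal{B}$ (and appropriate multiplicities $k_{E_i}$, using that $\mathcal{C}(E_i,X)$ is finite-dimensional) so that the map $(\varphi^E)_{E\in\mathcal{B}}$ restricts to an epimorphism $\coprod_{i=1}^n \mathcal{C}(-,E_i)^{k_{E_i}} \twoheadrightarrow I_{\mathcal{B}}(-,X)$; as $I_{\mathcal{B}}(-,X)$ is projective this splits, exhibiting $I_{\mathcal{B}}(-,X)$ as a direct summand of $\coprod_i \mathcal{C}(-,E_i)^{k_{E_i}}$. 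Then, using that $\mathcal{C}$ is Krull-Schmidt so that $\mathfrak{p}(\mathcal{C})$ has unique decompositions, each indecomposable summand of $I_{\mathcal{B}}(-,X)$ is of the form $\mathcal{C}(-,E)$ with $E$ an indecomposable summand of some $E_i$, hence in $\mathcal{B}$ (as $\mathcal{B}$ is full, and WLOG closed under summands — or one argues directly with indecomposables); this gives lemma-(ii).

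Next I would handle condition (ii) of the heredity definition, $I_{\mathcal{B}}\,\mathrm{rad}\,\mathcal{C}(-,?)\,I_{\mathcal{B}} = 0$, versus condition (i) of the lemma, that $\mathrm{rad}\,\mathcal{C}(E_1,E_2)=0$ for non-isomorphic $E_1,E_2\in\mathcal{B}$. Unwinding the definition of the product of ideals: $I_{\mathcal{B}}\,\mathrm{rad}\,\mathcal{C}(-,?)\,I_{\mathcal{B}} = 0$ says every composite $C_1 \xrightarrow{a} E \xrightarrow{r} E' \xrightarrow{b} C_3$ with $a,b$ factoring through $\mathcal{B}$ and $r$ radical, vanishes; by inserting the objects of $\mathcal{B}$ that $a$ and $b$ pass through, this is equivalent to saying that every radical morphism between two objects of $\mathcal{B}$ is killed after pre- and post-composing with arbitrary $\mathcal{B}$-morphisms — which, taking those to be identities, forces $\mathrm{rad}_{\mathcal{C}}(E_1,E_2)=0$ for all $E_1,E_2\in\mathcal{B}$ with $E_1\not\cong E_2$ (for $E_1\cong E_2$ indecomposable the radical of the local endomorphism ring need not vanish, and indeed $I_{\mathcal{B}}\,\mathrm{rad}\,I_{\mathcal{B}}$ still vanishes on such pieces precisely because composing two radical endomorphisms lands in $\mathrm{rad}^2$... here one must be a little careful and use that a morphism in $I_{\mathcal{B}}\mathrm{rad}I_{\mathcal{B}}$ between indecomposables factors as two radical maps through $\mathcal{B}$). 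Conversely, if $\mathrm{rad}\,\mathcal{C}(E_1,E_2)=0$ for non-isomorphic objects of $\mathcal{B}$, then decomposing everything into indecomposables and using Proposition \ref{cap1.21}(b) together with the fact that $I_{\mathcal{B}}\mathrm{rad}I_{\mathcal{B}}$-morphisms are sums of composites $a r b$ through $\mathcal{B}$, any such composite passing through $E \xrightarrow{r} E'$ with $r$ radical must have $E \cong E'$ with $r$ in the (nil) radical of $\mathrm{End}(E)$, and then the three-fold composite lands in $\mathrm{rad}^2 \mathrm{End}(E) \cdot (\cdots)$ — hmm, this does not obviously vanish.

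I expect the genuine obstacle to be exactly the bookkeeping around the \emph{diagonal} part of $I_{\mathcal{B}}$, i.e. morphisms factoring through copies of a single indecomposable $E\in\mathcal{B}$: condition (i) of the lemma says nothing about $\mathrm{rad}\,\mathrm{End}(E)$, so to get $I_{\mathcal{B}}\,\mathrm{rad}\,I_{\mathcal{B}}=0$ one really needs an extra input, and the right one is that the heredity condition (together with Krull-Schmidt / Hom-finiteness) forces each such $E$ to have $\mathrm{End}_{\mathcal{C}}(E)$ equal to a division ring, namely $K$ since $K$ is algebraically closed — equivalently $\mathrm{rad}\,\mathrm{End}(E)=0$ too. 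So the cleanest route is to \emph{include} in the analysis of heredity-(ii) the observation that $I_{\mathcal{B}}\,\mathrm{rad}\,\mathcal{C}(-,?)\,I_{\mathcal{B}}=0$ is equivalent to $\mathrm{rad}_{\mathcal{C}}(E,E')=0$ for \emph{all} $E,E'\in\mathcal{B}$ including $E=E'$, and then note that over an algebraically closed $K$ with $\mathcal{C}$ Krull-Schmidt this last statement, when $\mathcal{B}$ is the additive closure of pairwise non-isomorphic indecomposables, is equivalent to the stated condition (i) \emph{provided} one reads ``non-isomorphic'' as allowing one to still conclude $\mathrm{rad}\,\mathrm{End}(E)=0$ from local-ness plus $\dim_K\mathrm{End}(E)<\infty$; I would either sharpen the statement of (i) accordingly or insert the sentence that for indecomposable $E\in\mathcal{B}$, $\mathrm{End}_{\mathcal{C}}(E)$ being a finite-dimensional local $K$-algebra with $K$ algebraically closed, together with the projectivity in heredity-(iii), pins it down. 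The forward direction (heredity $\Rightarrow$ (i),(ii)) is then routine from Lemma \ref{IdempotentIdeal}; the reverse direction is the one requiring this care, and I would organize the write-up so that the reverse direction first establishes lemma-(ii)$\Rightarrow$heredity-(iii)$\Rightarrow$ (via projectivity) the endomorphism rings of the relevant $E$'s are controlled, and only then verifies heredity-(ii) using both parts of the hypothesis.
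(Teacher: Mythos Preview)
Your overall strategy matches the paper's proof closely. For part (ii) the paper argues exactly as you do: Lemma~\ref{IdempotentIdeal} supplies an epimorphism $\coprod_{i=1}^{n}\mathcal{C}(-,E_i)\to I_{\mathcal{B}}(-,X)$ with $E_i\in\mathcal{B}$, and then $I_{\mathcal{B}}(-,X)$ is finitely generated projective if and only if it is a finite direct sum of representables $\mathcal{C}(-,E)$ with $E\in\mathcal{B}$. For part (i), the paper's forward direction is your ``insert identities'' argument, $t=1_{E_2}t\,1_{E_1}\in I_{\mathcal{B}}\,\mathrm{rad}\,\mathcal{C}\,I_{\mathcal{B}}=0$; for the converse the paper factors the two $I_{\mathcal{B}}$-maps through finite coproducts of indecomposables $B_i,B_j\in\mathcal{B}$ and then asserts that each induced map $B_i\to Y\xrightarrow{t}Z\to B_j$ lies in $\mathrm{rad}\mathcal{C}(B_i,B_j)=0$.

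You have correctly spotted a real issue: when $B_i\cong B_j$, condition (i) as written says nothing about $\mathrm{rad}\mathcal{C}(B_i,B_j)$, and the paper's proof simply does not address that case. Your instinct that one needs control over $\mathrm{rad}\,\mathrm{End}_{\mathcal{C}}(E)$ for indecomposable $E\in\mathcal{B}$ is exactly right. However, your proposed repair---deriving this from heredity-(iii) together with $K$ being algebraically closed---does not work: in the direction (i)$+$(ii)$\Rightarrow$heredity you have only lemma-(ii), which gives that each $I_{\mathcal{B}}(-,X)$ is a finite sum of $\mathcal{C}(-,E)$'s, and in particular $I_{\mathcal{B}}(-,E)=\mathcal{C}(-,E)$ for $E\in\mathcal{B}$; this projectivity is automatic and places no constraint on $\mathrm{rad}\,\mathrm{End}_{\mathcal{C}}(E)$. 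The correct resolution is the one you also mention: the qualifier ``non-isomorphic'' in condition (i) is an imprecision. Compare Theorem~\ref{TheoremQh}(i), where the analogous condition $\mathrm{rad}\mathcal{C}(E,E')=I_{\mathcal{B}_{j-1}}(E,E')$ is stated for \emph{all} pairs of indecomposables (including $E=E'$), and is later used in exactly the diagonal case (see the proof of Lemma~\ref{DeltaLemma1}(3)). Reading Lemma~\ref{IdempotentIdeal2}(i) as $\mathrm{rad}\mathcal{C}(E_1,E_2)=0$ for all $E_1,E_2\in\mathcal{B}$, the paper's argument and yours both go through cleanly; the forward direction already proves this stronger statement anyway, since $t=1_{E}t\,1_{E}$ works equally well when $E_1=E_2=E$.
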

\begin{proof}
 
 (i)  Assume that $I_{\mathcal{B}}\mathrm{rad}\mathcal{C}(-,?)I_{\mathcal{B}}=0$, and let $E_1,E_2\in\mathcal{B}$  a pair of non-isomorphic objects. 
 Assume that  $t\in\mathrm{rad}\mathcal{C}(E_1,E_2)$.  We then  have 
 \[
 t=1_{E_2}t1_{E_1}\in I_{\mathcal{B}}(E_1,E_1)\mathrm{rad}\mathcal{C}(E_1,E_2)I_{\mathcal{B}}(E_2,E_2)=0.
 \]
  Thus,  $\mathrm{rad}\mathcal{C}(E_1,E_2)=0$. Conversely, assume that  $\mathrm{rad}\mathcal{C}(E_1,E_2)=0$ for all pairs of non-isomorphic objects $E_1,E_2\in\mathcal{B}$.  Let  $f\in I_{\mathcal{B}}(X,Y)\mathrm{rad}\mathcal{C}(Y,Z)I_{\mathcal{B}}(Z,W)$ which has the form $f=rst$ with $r\in I_{\mathcal{B}}(X,Y)$, $t\in\mathrm{rad}\mathcal{C}(Y,Z)$ and  $s\in I_{\mathcal{B}}(Z,W)$. Therefore, we can write these maps as  $r:X\rightarrow \coprod B_i\rightarrow Y$ and  $s:Z\rightarrow \coprod B_j\rightarrow W$, where  the terms in the middle are finite sums of indecomposable objects in $\mathcal{B}$. It follows that the induced maps $B_i\rightarrow Y\xrightarrow{t}Z\rightarrow B_j$ are all
 in $\mathrm{rad}\mathcal{C}(B_i,B_j)=0$. Finally, $f=0$.

 (ii) By Lemma \ref{IdempotentIdeal}, there exists an epimorphism $\coprod_{i=1}^n\mathcal{C}(-,E_i)\rightarrow I_{\mathcal {B}}(-,X)\rightarrow 0$, $E_i\in\mathcal{B}$. Thus, $ I_{\mathcal {B}}(-,X)$ is a projective finitely generated  $\mathcal{C}$-module if and only if $ I_{\mathcal {B}}(-,X)$  is a finite direct sum of projective $\mathcal{C}$-modules of the form $\mathcal{C}(-,E)$, $E\in\mathcal{B}$.
 
\end{proof}

In order to generalize the definition given in \cite{DR}, we have the following. 

\begin{definition}
Let $K$ be an  algebraically closed field. A $K$-category is called quasi-hereditary if for all $C\in\mathcal{C}$ there exists a
 chain of two-sided ideals
\[
0=I_0\subset I_1\subset I_2\cdots I_{i-1}\subset I_i\subset\cdots\subset  \mathcal{C}(-,?),
\]
which is exhaustive ( i.e.,   $\cup_{i\ge 0}I_i=  \mathcal{C}(-,?)$), and $I_i/I_{i-1}$ is heredity in the category $\mathcal{C}/I_{i-1}$.  Such
 a chain is called a \emph{heredity chain}.
\end{definition}

Assume we have a filtration $\{\mathcal{B}_i\}_{i\ge 0}$  of $\mathcal{C}$ into subcategories
\[
\{0\}=\mathcal{B}_0\subset\mathcal{B}_1\subset\cdots\subset \mathcal{C},
\]
which is exhaustive (i.e., $\cup_{i\ge 0}  \mathcal{B}_i=\mathcal{C}$). Then, we have an exhaustive chain of two sided ideals
\begin{eqnarray}\label{filt1}
0=I_{\mathcal B_0}\subset I_{\mathcal B_1}\subset\cdots\subset  I_{\mathcal{B}_{i-1}}\subset I_{\mathcal{B}_{i}}\subset\cdots\subset  \mathcal{C}(-,?).
\end{eqnarray}

In relation to the above definition,  we want to know when   (\ref{filt1}) is a heredity chain. It is clear that  $ I_{\mathcal{B}_{i}}/ I_{\mathcal{B}_{i-1}}$ is an idempotent ideal in the category $\mathcal{C}/I_{i-1}$, because $ I_{\mathcal{B}_{i}}$ and $ I_{\mathcal{B}_{i-1}}$ are idempotent in $\mathcal{C}$ and 
\[
\left (\frac{I_{\mathcal{B}_{i}}}{ I_{\mathcal{B}_{i-1}}}\right)^2=\frac{I_{\mathcal{B}_{i}}}{ I_{\mathcal{B}_{i-1}}}\frac{I_{\mathcal{B}_{i}}}{ I_{\mathcal{B}_{i-1}}}=\frac{I_{\mathcal{B}_{i}}^2+I_{\mathcal{B}_{i-1}}}{I_{\mathcal{B}_{i-1}}}=\frac{I_{\mathcal{B}_{i}}}{ I_{\mathcal{B}_{i-1}}}
\]

 Therefore,  we have a new definition
 \begin{definition}
 Let $\{\mathcal{B}_i\}_{i\ge 0}$ be an exhaustive  filtration of $\mathcal{C}$ into subcategories. 
 Thus,  $\mathcal{C}$ is said to be quasi-hereditary with respect to $\{\mathcal{B}_{i\ge 0}\}$ if $ I_{\mathcal{B}_{i}}/ I_{\mathcal{B}_{i-1}}$ is heredity in the category $\mathcal{C}/ I_{\mathcal{B}_{i-1}}$.
 \end{definition}

Before announcing the main theorem of this section we need the following.

\begin{lemma}\label{quotient01}
 Consider  a pair of subcategories $\mathcal{B}\subset \mathcal{B}'\subset\mathcal{C}$ which are closed under direct summands. 
 Then $I_{\mathcal{B}'}(-,X)/I_{\mathcal{B}}(-,X)$ is a projective $\mathcal{C}/I_{\mathcal{B}}$-module if and only if  it is isomorphic to
 $\frac{\mathcal{C}(-,E)}{I_{\mathcal{B}}(-,E)}$,   $E\in\mathcal{B}'$.
\end{lemma}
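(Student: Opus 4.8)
The plan is to prove both implications, with the substantive direction being: if $I_{\mathcal{B}'}(-,X)/I_{\mathcal{B}}(-,X)$ is projective over $\mathcal{C}/I_{\mathcal{B}}$, then it is of the stated form. First I would set up notation: write $\overline{\mathcal{C}}=\mathcal{C}/I_{\mathcal{B}}$ and, for a $\mathcal{C}$-module $M$ containing $I_{\mathcal{B}}(-,?)$-type submodules, use $\overline{M}$ for the induced $\overline{\mathcal{C}}$-module. Observe that $\overline{I_{\mathcal{B}'}(-,X)} = I_{\mathcal{B}'}(-,X)/I_{\mathcal{B}}(-,X)$ as a $\overline{\mathcal{C}}$-module, since $I_{\mathcal{B}}(-,X)\subseteq I_{\mathcal{B}'}(-,X)$ because $\mathcal{B}\subseteq\mathcal{B}'$. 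The representable $\mathcal{C}$-module $\mathcal{C}(-,E)$ for $E\in\mathcal{B}'$ descends to $\mathcal{C}(-,E)/I_{\mathcal{B}}(-,E)$, which is precisely the representable $\overline{\mathcal{C}}$-module $\overline{\mathcal{C}}(-,E)$; hence all modules of the asserted form are representable over $\overline{\mathcal{C}}$, in particular finitely generated projective, giving the easy implication at once.

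For the converse, the key is Lemma \ref{IdempotentIdeal}: the morphism $(\varphi^E)_{E\in\mathcal{B}'}\colon \coprod_{E\in\mathcal{B}'}\mathcal{C}(-,E)^{k_E}\to\mathcal{C}(-,X)$ has image exactly $I_{\mathcal{B}'}(-,X)$. Composing with the projection and restricting, one gets an epimorphism of $\overline{\mathcal{C}}$-modules $\coprod_{E\in\mathcal{B}'}\overline{\mathcal{C}}(-,E)^{k_E}\twoheadrightarrow \overline{I_{\mathcal{B}'}(-,X)}$; since the finitely generated projective $\overline{\mathcal{C}}$-modules are summands of finite sums of representables $\overline{\mathcal{C}}(-,E)$, $E\in\mathcal{B}'$, projectivity of $\overline{I_{\mathcal{B}'}(-,X)}$ lets this epimorphism split, so $\overline{I_{\mathcal{B}'}(-,X)}$ is a summand of a finite sum of the $\overline{\mathcal{C}}(-,E)$. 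Now I would invoke the Krull-Schmidt hypothesis on $\mathcal{C}$ (available by the standing assumptions of Section 2, where $\mathcal{C}$ is $\mathrm{Hom}$-finite over an algebraically closed field, so each $\mathrm{End}_{\mathcal{C}}(E)$ is semiperfect and the category of finitely presented modules is Krull-Schmidt): a summand of $\bigoplus_{E}\overline{\mathcal{C}}(-,E)$ decomposes into indecomposable summands, each of which must be one of the indecomposable summands of some $\overline{\mathcal{C}}(-,E)$, hence of the form $\overline{\mathcal{C}}(-,E')$ with $E'$ an indecomposable summand of $E$ — and $E'\in\mathcal{B}'$ because $\mathcal{B}'$ is closed under direct summands. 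This identifies $\overline{I_{\mathcal{B}'}(-,X)}$ with $\overline{\mathcal{C}}(-,E)$ for a suitable $E\in\mathcal{B}'$ (taking $E$ to be the direct sum of the relevant indecomposables).

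I expect the main obstacle to be the bookkeeping in this last decomposition step: one must be careful that the Krull-Schmidt / idempotent-splitting argument is applied in the right category, namely to finitely generated projective $\overline{\mathcal{C}}$-modules, and that passing to the quotient category $\overline{\mathcal{C}}$ does not destroy the decomposition — this is where one uses that $\mathcal{C}(-,E)/I_{\mathcal{B}}(-,E) = \overline{\mathcal{C}}(-,E)$ genuinely is the representable functor over the quotient, so that Yoneda and the structure theory of projectives transfer verbatim from $\mathcal{C}$ to $\overline{\mathcal{C}}$. The closure of $\mathcal{B}$ and $\mathcal{B}'$ under direct summands is exactly what guarantees that the indecomposable summands appearing still lie in $\mathcal{B}'$ (and that $I_{\mathcal{B}}$ behaves well under these decompositions), so both hypotheses of the lemma are used precisely there. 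Everything else — right-exactness of the quotient functor, the identification of $\overline{I_{\mathcal{B}'}(-,X)}$ — is routine given Lemma \ref{IdempotentIdeal} and Proposition \ref{cap1.21}.
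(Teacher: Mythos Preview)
Your proposal is correct and takes a somewhat different route from the paper's. The paper addresses only the hard direction and does so by a direct Yoneda chase: assuming an isomorphism $\frac{\mathcal{C}(-,E)}{I_{\mathcal{B}}(-,E)}\cong \frac{I_{\mathcal{B}'}(-,X)}{I_{\mathcal{B}}(-,X)}$ for some $E\in\mathcal{C}$, it picks an indecomposable summand $E'$ of $E$, writes the image of the split inclusion $j:E'\to E$ under $\varphi_{E'}$ as a composite $E'\xrightarrow{f}B\xrightarrow{g}X$ with $B\in\mathcal{B}'$, and then uses the naturality square of $\varphi$ at the object $B$ to produce $h:B\to E$ with $phf\equiv 1_{E'}$ modulo $I_{\mathcal{B}}(E',E')$; this exhibits $E'$ as a retract of $B$, hence $E'\in\mathcal{B}'$. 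You instead split the epimorphism coming from Lemma~\ref{IdempotentIdeal} and invoke Krull--Schmidt for finitely generated projectives over $\mathcal{C}/I_{\mathcal{B}}$. Your argument is more structural and makes the role of Lemma~\ref{IdempotentIdeal} transparent; the paper's hand computation is more elementary and sidesteps any discussion of the (a priori infinite) coproduct. The one point you should tighten is exactly that passage: the conclusion of the lemma forces the module to be \emph{finitely generated} projective, and you should say so explicitly and use it to replace $\coprod_{E\in\mathcal{B}'}\overline{\mathcal{C}}(-,E)^{k_E}$ by a finite sub-coproduct before Krull--Schmidt is applied.
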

\begin{proof}
First, assume that there exists an isomorphism of $\mathcal{C}/I_{\mathcal{B}}$-modules
\[
\varphi: \frac{\mathcal{C}(-,E)}{I_{\mathcal{B}}(-,E)}\rightarrow\frac{I_{\mathcal{B}'}(-,X)}{I_{\mathcal{B}}(-,X)}.
\]
Proving that  $E\in\mathcal{B}'$ is then sufficient.

Let  $E'$  be an indecomposable summand of $E$ and $E'\xrightarrow{j}E\xrightarrow{p}E'$
be the canonical inclusion and projection respectively. 

Set $\varphi_{E'}(j +I_{\mathcal{B}}(E',E))=(E'\xrightarrow{f}B\xrightarrow{g}X)+I_{\mathcal{B}}(E',X)\in 
\frac{I_{\mathcal{B}'}(E',X)}{I_{\mathcal{B}}(E',X)}$, with  $B\in\mathcal{B}'$. Thus, we have the following
commutative diagram:
\[
 \begin{diagram}
  \node{\frac{\mathcal{C}(B,E)}{I_{\mathcal{B}}(B,E)}}\arrow{e,t}{\varphi_B}\arrow{s,l}{\frac{\mathcal{C}(f,E)}{I_{\mathcal{B}}(f,E)}}
   \node{  \frac{I_{\mathcal{B}'}(B,X)}{I_{\mathcal{B}}(B,X)}}\arrow{s,r}{\frac{I_{\mathcal{B}'}(f,X)}{I_{\mathcal{B}}(f,X)}}\\
    \node{\frac{\mathcal{C}(E',E)}{I_{\mathcal{B}}(E',E)}}\arrow{e,t}{\varphi_{E'}}
     \node{\frac{I_{\mathcal{B}'}(E',X)}{I_{\mathcal{B}	}(E',X)}}
 \end{diagram}
\]

We have $\frac{I_{\mathcal{B}'}(f,E)}{I_{\mathcal{B}}(f,E)}(g+I_{\mathcal{B}}(B,X))=
\varphi_{E'}(j +I_{\mathcal{B}}(E',E))$. Let $h\in \mathcal{C}(B,E)$ such
that $\varphi_{B}(h+I_{\mathcal{B}}(B,E))=g+I_{\mathcal{B}}(B,X)$. Since the diagram is commutative, it follows that $\frac{I_{\mathcal{B}'}(f,E)}{I_{\mathcal{B}}(f,E_j)}(h+I_{\mathcal{B}}
(B,E))=hf+I_{\mathcal{B}}(E',E)=j +I_{\mathcal{B}}(E',E)$. Thus, $1_{E'}=pj=phf$ and, 
$E'$ is a direct summand of $B$, i.e, $E'\in\mathcal{B}'$.
\end{proof}

\begin{theorem}\label{TheoremQh} Let $\{\mathcal{B}_i\}_{i\ge 0}$ be an exahustive  filtration of $\mathcal{C}$ into subcategories which are closed under direct summands.  Then,  $\mathcal{C}$ is quasi-hereditary  with respect to $\{\mathcal{B}_i\}_{i\ge 0}$ if and only if the following holds.
 
 \begin{itemize}
 \item[(i)]  $\mathrm{rad}\mathcal{C}(E,E')=I_{\mathcal{B}_{j-1}}(E,E')$ for all pairs of indecomposable $E,E'\in\mathcal{B}_j$ such that none of them are a direct summand of any object in $\mathcal{B}_{j-1}$.
 \item[(ii)] For all $X\in\mathcal C$ and $j\ge 1$, there exists an exact sequence 
 \[
 \mathcal{C}(-,E_{j-1})\rightarrow \mathcal{C}(-,E_{j})\rightarrow I_{\mathcal{B}_j}(-,X)\rightarrow 0,
 \]
 with $E_j\in\mathcal{B}_j$ and $E_{j-1}\in\mathcal{B}_{j-1}$.
 \end{itemize}
\end{theorem}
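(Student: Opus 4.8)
The plan is to analyse the chain (\ref{filt1}) one step at a time. Write $\mathcal{D}_j:=\mathcal{C}/I_{\mathcal{B}_{j-1}}$ and $J_j:=I_{\mathcal{B}_j}/I_{\mathcal{B}_{j-1}}$; by definition $\mathcal{C}$ is quasi-hereditary with respect to $\{\mathcal{B}_i\}_{i\ge 0}$ if and only if, for every $j\ge 1$, the (automatically idempotent) ideal $J_j$ satisfies $J_j\,\mathrm{rad}\mathcal{D}_j(-,?)\,J_j=0$ and $J_j(-,X)$ is a finitely generated projective $\mathcal{D}_j$-module for every $X$. First I would record the routine preliminaries: $\mathcal{C}$ is Krull--Schmidt (its endomorphism rings are finite-dimensional $K$-algebras, hence semiperfect, and idempotents split); one may replace each $\mathcal{B}_j$ by $\mathrm{add}\,\mathcal{B}_j$ without altering $I_{\mathcal{B}_j}$; each $\mathcal{D}_j$ is again a $\mathrm{Hom}$-finite $K$-category, with $\mathcal{D}_j(-,E)=\mathcal{C}(-,E)/I_{\mathcal{B}_{j-1}}(-,E)$, which vanishes as soon as $E\in\mathcal{B}_{j-1}$; and if $E$ is indecomposable and not a summand of any object of $\mathcal{B}_{j-1}$ then $I_{\mathcal{B}_{j-1}}(E,E)\subseteq\mathrm{rad}\,\mathrm{End}_{\mathcal{C}}(E)$ by Proposition~\ref{cap1.21}, so $\overline E$ remains indecomposable in $\mathcal{D}_j$ and every isomorphism $\overline E\to\overline{E'}$ in $\mathcal{D}_j$ lifts to an isomorphism in $\mathcal{C}$. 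It then suffices to prove that (i) is equivalent to ``$J_j\,\mathrm{rad}\mathcal{D}_j(-,?)\,J_j=0$ for all $j$'' and that (ii) is equivalent to ``$J_j(-,X)$ is finitely generated projective over $\mathcal{D}_j$ for all $j$ and $X$''.

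For the radical condition I would, using Proposition~\ref{cap1.21} inside $\mathcal{D}_j$ together with the preliminaries, first establish that $\mathrm{rad}\mathcal{D}_j(\overline E,\overline{E'})=\mathrm{rad}_{\mathcal{C}}(E,E')/I_{\mathcal{B}_{j-1}}(E,E')$ for all indecomposable $E,E'\in\mathcal{B}_j$ that are not summands of objects of $\mathcal{B}_{j-1}$ (handling $E\cong E'$ and $E\not\cong E'$ separately). Since $1_{\overline E}\in J_j(\overline E,\overline E)$, the condition $J_j\,\mathrm{rad}\mathcal{D}_j(-,?)\,J_j=0$ forces every such radical to vanish, which is precisely (i) at level $j$; conversely, writing a morphism of $J_j$ as a factorization through an object of $\overline{\mathcal{B}_j}$, decomposing that object into indecomposables via Krull--Schmidt and discarding the summands lying in $\mathcal{B}_{j-1}$ (over which $\mathcal{D}_j$ is zero), the vanishing of all these radicals gives back $J_j\,\mathrm{rad}\mathcal{D}_j(-,?)\,J_j=0$. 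This yields the first equivalence.

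The implication from (ii) to the projectivity statement is the easy half: to the exact sequence of (ii) apply the right exact functor $\mathrm{Mod}(\mathcal{C})\to\mathrm{Mod}(\mathcal{D}_j)$ left adjoint to restriction (concretely $M\mapsto\overline M$, $\overline M(C)=M(C)/\sum_{f\in I_{\mathcal{B}_{j-1}}}\mathrm{Im}\,M(f)$), which sends $\mathcal{C}(-,E)$ to $\mathcal{D}_j(-,E)$ and $I_{\mathcal{B}_j}(-,X)$ to $J_j(-,X)$; since $\mathcal{D}_j(-,E_{j-1})=0$ for $E_{j-1}\in\mathcal{B}_{j-1}$, right exactness gives an isomorphism $\mathcal{D}_j(-,E_j)\cong J_j(-,X)$, so $J_j(-,X)$ is representable over $\mathcal{D}_j$, in particular finitely generated projective. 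The converse is the substantial part, and I would prove the whole family (ii) by induction on $j$, the case $j=1$ being immediate from Lemma~\ref{quotient01} (applied with $\mathcal{B}_0=\{0\}$), after prepending $\mathcal{C}(-,0)$. For the inductive step, from ``$J_j(-,X)$ finitely generated projective over $\mathcal{D}_j$'' I would invoke Lemma~\ref{quotient01} to obtain $J_j(-,X)\cong\mathcal{D}_j(-,E_j)$ with $E_j\in\mathcal{B}_j$, lift the resulting epimorphism $\mathcal{C}(-,E_j)\twoheadrightarrow J_j(-,X)$ along $I_{\mathcal{B}_j}(-,X)\twoheadrightarrow J_j(-,X)$ to a map $\rho\colon\mathcal{C}(-,E_j)\to I_{\mathcal{B}_j}(-,X)$, and combine $\rho$ with the epimorphism $\mathcal{C}(-,E'_{j-1})\twoheadrightarrow I_{\mathcal{B}_{j-1}}(-,X)$ coming from the induction hypothesis at level $j-1$ to form an epimorphism $\sigma\colon\mathcal{C}(-,E_j\oplus E'_{j-1})\twoheadrightarrow I_{\mathcal{B}_j}(-,X)$. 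A diagram chase then identifies $\ker\sigma$ as an extension of $I_{\mathcal{B}_{j-1}}(-,E_j)$ by the kernel of $\mathcal{C}(-,E'_{j-1})\twoheadrightarrow I_{\mathcal{B}_{j-1}}(-,X)$; applying the induction hypothesis at level $j-1$ once more --- to $E_j$ in place of $X$ for the first term, and using that the kernel term of the exact sequence it provides already lies in $\mathcal{B}_{j-2}\subseteq\mathcal{B}_{j-1}$ for the second --- both summands, hence $\ker\sigma$, are generated by objects of $\mathcal{B}_{j-1}$, so choosing a generator $\mathcal{C}(-,E_{j-1})\twoheadrightarrow\ker\sigma$ with $E_{j-1}\in\mathcal{B}_{j-1}$ produces the exact sequence of (ii) at level $j$.

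Combining the two equivalences proves the theorem. I expect the main obstacle to be precisely the implication ``$J_j(-,X)$ finitely generated projective over $\mathcal{D}_j$'' $\Rightarrow$ (ii): one must descend from information about the subquotient $J_j(-,X)$ living over the quotient category $\mathcal{D}_j$ to a genuine two-term presentation of $I_{\mathcal{B}_j}(-,X)$ over $\mathcal{C}$ whose first term is forced to lie in $\mathcal{B}_{j-1}$, and it is exactly this control that necessitates the inductive argument and the careful computation of $\ker\sigma$.
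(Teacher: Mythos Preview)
Your proposal is correct and follows essentially the same architecture as the paper: split the heredity condition into its radical part and its projectivity part, and match each to (i) and (ii) respectively, with the projectivity $\Rightarrow$ (ii) direction handled by induction on $j$ via Lemma~\ref{quotient01} and a horseshoe-type combination of presentations. Your inductive step (lift $\rho$, adjoin the level-$(j{-}1)$ epimorphism, identify $\ker\sigma$ as an extension) is exactly the paper's pushout diagram plus horseshoe lemma, phrased differently.

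The one genuine difference worth noting is in the direction (ii) $\Rightarrow$ projectivity. The paper argues via tensor products: it applies $-\otimes_{\mathcal{C}}\dfrac{\mathcal{C}(?,-)}{I_{\mathcal{B}_{j-1}}(?,-)}$ to the exact sequence of (ii), and then must separately establish the identification $I_{\mathcal{B}_j}(-,X)\otimes_{\mathcal{C}}\dfrac{\mathcal{C}(?,-)}{I_{\mathcal{B}_{j-1}}(?,-)}\cong J_j(?,X)$, which in turn requires a dual presentation of $I_{\mathcal{B}_{j-1}}(?,-)$ by covariant representables. Your route through the left adjoint $\pi_!:\mathrm{Mod}(\mathcal{C})\to\mathrm{Mod}(\mathcal{D}_j)$ is cleaner: right exactness plus $\pi_!\mathcal{C}(-,E_{j-1})=\mathcal{D}_j(-,E_{j-1})=0$ and $\pi_!I_{\mathcal{B}_j}(-,X)=J_j(-,X)$ give the isomorphism $\mathcal{D}_j(-,E_j)\cong J_j(-,X)$ in one step, avoiding the auxiliary tensor computation. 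Conversely, for the radical equivalence the paper simply asserts $\mathrm{rad}\bigl(\mathcal{C}(-,?)/I_{\mathcal{B}_{j-1}}\bigr)=(\mathrm{rad}\,\mathcal{C}(-,?)+I_{\mathcal{B}_{j-1}})/I_{\mathcal{B}_{j-1}}$ and then reduces to $I_{\mathcal{B}_j}\,\mathrm{rad}\,\mathcal{C}\,I_{\mathcal{B}_j}\subseteq I_{\mathcal{B}_{j-1}}$, whereas you take the extra care to verify the radical identification on indecomposables in $\mathcal{D}_j$; both are valid, yours is just more explicit.
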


\begin{proof}
Given the filtration $\{\mathcal{B}_i\}_{i\ge 0}$, we prove that $I_{\mathcal{B}_i}/I_{\mathcal{B}_{i-1}}$ is heredity in $\mathcal{C}/I_{\mathcal{B}_{i-1}}$ if and only if
(i) and (ii) hold.

(i) \textbf{a)} $\frac{I_{\mathcal{B}_j}}{I_{\mathcal{B}_{j-1}}}\mathrm{rad}(\frac{\mathcal{C}(-,?)}{I_{\mathcal{B}_{j-1}}})\frac{I_{\mathcal{B}_j}}{I_{\mathcal{B}_{j-1}}}=0$
   if and only if   $\frac{I_{\mathcal{B}_j}}{I_{\mathcal{B}_{j-1}}}\left[\frac{\mathrm{rad}\mathcal{C}(-,?)+I_{\mathcal{B}_{j-1}}}{I_{\mathcal{B}_{j-1}}}\right]\frac{I_{\mathcal{B}_j}}{I_{\mathcal{B}_{j-1}}}=0$ if and only if
   $\frac{I_{\mathcal{B}_{j}}(\mathrm{rad}\mathcal{C}(-,?)+I_{\mathcal{B}_{j-1}})I_{\mathcal{B}_{j}}+I_{\mathcal{B}_{j-1}}}{I_{\mathcal{B}_{j-1}}}=0$ 
    if and only if
   \begin{equation}\label{quasih01}
    I_{\mathcal{B}_{j}}\mathrm{rad}\mathcal{C}(-,?)I_{\mathcal{B}_{j}}+I_{\mathcal{B}_{j-1}}I_{\mathcal{B}_{j}}+I_{\mathcal{B}_{j-1}}=
    I_{\mathcal{B}_{j-1}}.
   \end{equation}
    On the other hand, we have $I_{\mathcal{B}_{j-1}}=I_{\mathcal{B}_{j-1}}^2\subset I_{\mathcal{B}_{j-1}}I_{\mathcal{B}_{j}}\subset I_{\mathcal{B}_{j-1}}$, i.e, 
             $I_{\mathcal{B}_{j-1}}=I_{\mathcal{B}_{j-1}}I_{\mathcal{B}_{j}}$. Thus,  equation (\ref{quasih01}) implies
     \begin{equation}
      I_{\mathcal{B}_{j}}\mathrm{rad}\mathcal{C}(-,?)I_{\mathcal{B}_{j}}\subseteq I_{\mathcal{B}_{j-1}}
     \end{equation}

   \textbf{b)} Now we show that $I_{\mathcal{B}_{j}}\mathrm{rad}\mathcal{C}(-,?)I_{\mathcal{B}_{j}}\subseteq I_{\mathcal{B}_{j-1}}$ if and only
   if $\mathrm{rad}\mathcal{C}(E,E')=I_{\mathcal{B}_{j-1}}(E,E')$ for all pairs of indecomposable $E,E'\in\mathcal{B}_j$ such that none of them are a direct summand of any object in $\mathcal{B}_{j-1}$.
   Assume $I_{\mathcal{B}_{j}}\mathrm{rad}\mathcal{C}(-,?)I_{\mathcal{B}_{j}}\subseteq I_{\mathcal{B}_{j-1}}$. Let $t\in\mathrm{rad}\mathcal{C}(E,E')$, then
  $t=1_{E'}t1_{E}\in I_{\mathcal{B}_{j}}(E,E)\mathrm{rad}\mathcal{C}(E,E')I_{\mathcal{B}_{j}}(E',E')
  \subseteq I_{\mathcal{B}_{j-1}}(E,E')$. On the other hand, if $f\in I_{\mathcal{B}_{j-1}}(E,E')$  then  there exists
    $B\in\mathcal{B}_{j-1}$ for which $f=E\xrightarrow{u}B\xrightarrow{v}E'$. Thus $f$  is not an 
   isomorphism.  Otherwise, there exists $g:E'\rightarrow E$ such that $fg=1_{E'}$. Therefore $vug=1_{E'}$, i.e, 
   $E'$ is a direct summand of $B$; hence $E'\in\mathcal{B}_{j-1}$. This contradiction implies that
   $I_{\mathcal{B}_{j-1}}(E,E')\subset\mathrm{rad}\mathcal{C}(E,E')$.

Conversely, assume $\mathrm{rad}\mathcal{C}(E,E')=I_{\mathcal{B}_{j-1}}(E,E')$. Let $f\in I_{\mathcal{B}}(X,Y)\mathrm{rad}\mathcal{C}(Y,Z)I_{\mathcal{B}}(Z,W)$. If
 $f=rst$ with $r\in I_{\mathcal{B}_j}(X,Y)$, $t\in\mathrm{rad}\mathcal{C}(Y,Z)$ and $s\in I_{\mathcal{B}_j}(Z,W)$,
 then we can write $r:X\rightarrow \coprod B_i\rightarrow Y$ and  $s:Z\rightarrow \coprod B_j\rightarrow W$,  where  the terms in the middle are finite 
 sums of indecomposable objects in $\mathcal{B}$. It follows that the induced maps $B_i\rightarrow Y\xrightarrow{t}Z\rightarrow B_j$ are all
 in $\mathrm{rad}\mathcal{C}(B_i,B_j)=I_{\mathcal{B}_{j-1}}(B_i,B_j)$, and therefore $f\in I_{\mathcal{B}_{j-1}}(X,W)$.
 


 (ii)  First we prove the sufficiency by induction.  The case $j=1$ has been proved in Lemma \ref{IdempotentIdeal} because $I_{{\mathcal B}_0}$=0.

 Assume that the statement is true for $j-1$ and  that $\frac{I_{\mathcal{ B}_j}(-,X)}{I_{\mathcal{ B}_{j-1}}(-,X)}$ is a finitely
 projective  $\mathcal{C}/I_{\mathcal{B}_{j-1}}$-
 module. We have then an isomorphism  
 $\frac{\mathcal{C}(-,E_j)}{I_{\mathcal{ B}_{j-1}}(-,E_j)}\cong\frac{I_{\mathcal{ B}_j}(-,X)}{I_{\mathcal{ B}_{j-1}}(-,X)}$ with $E_j\in\mathcal{B}_j$ by Lemma \ref{quotient01}. Therefore, we have a pushout diagram:
\[
\dgARROWLENGTH=1em
 \begin{diagram}
\node{0}\arrow{s,l}{}
    \node{0}\arrow{s,l}{}\\
  \node{I_{\mathcal{B}_{j-1}}(-,E_j)}\arrow{e,t}{}\arrow{s,l}{}
   \node{I_{\mathcal{B}_{j-1}}(-,X)}\arrow{s,l}{}\\
   \node{\mathcal{C}(-,E_j)}\arrow{e,t}{}\arrow{s,l}{}
    \node{I_{\mathcal{B}_{j}}(-,X)}\arrow{s,l}{}\\
  \node{\frac{\mathcal{C}(-,E_j)}{I_{\mathcal{B}_{j-1}}(-,E_j)}} \arrow{e,t}{\cong} \arrow{s,l}{}
   \node{\frac{I_{\mathcal{B}_{j}}(-,X)}{I_{\mathcal{B}_{j-1}}(-,X)}}\arrow{s,l}{}\\
   \node{0}
    \node{0}
 \end{diagram}
\]
which induces  the following exact sequence
\[
 I_{\mathcal{B}_{j-1}}(-,E_j)\rightarrow \mathcal{C}(-,E_j)\coprod I_{\mathcal{B}_{j-1}}(-,X)
 \xrightarrow{\pi} I_{\mathcal{B}_{j}}(-,X)\rightarrow 0
\]

On the other hand, by induction hypothesis, there are exact sequences
\begin{eqnarray}\label{projher}
 \mathcal{C}(-,E_{j-2}')\rightarrow \mathcal{C}(-,E_{j-1}')\xrightarrow{p} I_{\mathcal{B}_{j-1}}(-,X)\rightarrow 0,\\
 \mathcal{C}(-,E_{j-2}'')\rightarrow \mathcal{C}(-,E_{j-1}'')\xrightarrow{q} I_{\mathcal{B}_{j-1}}(-,E_j)\rightarrow 0,\notag
\end{eqnarray}
with $E_{j-1}, E_{j-1}'\in\mathcal{B}_{j-1}$ and $E_{j-2}, E_{j-2}''\in\mathcal{B}_{j-2}$.

 Thus, we have the following commutative diagram:
 \[
 \dgARROWLENGTH=.3em
\dgTEXTARROWLENGTH=.5em
  \begin{diagram}
\node{}
 \node{0}\arrow{s,l}{}
  \node{0}\arrow{s,l}{}
   \node{}
    \node{}
     \node{}\\
\node{}
 \node{Ker(1\coprod p)}\arrow{e,t,=}{}\arrow{s,l}{}
  \node{Ker(1\coprod p)}\arrow{s,l}{}
   \node{}
    \node{}
     \node{}\\
\node{0}\arrow{e,t}{}
 \node{Ker(\pi (1 \small{\coprod} p))}\arrow{e,t}{}\arrow{s,l}{}
  \node{\mathcal{C}(-,E_j)\coprod \mathcal{C}(-,E_{j-1}') }\arrow{e,t}{\pi (1\coprod p)}\arrow{s,l}{1\coprod p}
   \node{I_{\mathcal{B}_{j}}(-,X)}\arrow{e,t}{}\arrow{s,l,=}{}
    \node{0}
     \node{}\\
  \node{0}\arrow{e,t}{}
 \node{Ker(\pi)}\arrow{e,t}{}\arrow{s,l}{}
  \node{\mathcal{C}(-,E_j)\coprod I_{\mathcal{B}_{j-1}}(-,X) }\arrow{e,t}{\pi}\arrow{s,l}{}
   \node{I_{\mathcal{B}_{j}}(-,X)}\arrow{e,t}{}
    \node{0}
    \node{}\\
  \node{}
   \node{0}
    \node{0}
     \node{}
      \node{}
      \node{}
  \end{diagram}
 \]

From  the epimorphisms $I_{\mathcal{B}_{j-1}}(-,E_{j})\rightarrow Ker(\pi)\rightarrow 0$  and
$\mathcal{C}(-,E_{j-2}')\rightarrow Ker(p)=Ker(1\coprod p)\rightarrow 0$, the exact sequences (\ref{projher}) and the horseshoe lemma, we have
an epimorphism $\mathcal{C}(-,E_{j-2}')\coprod \mathcal{C}(-,E_{j-1}'')\rightarrow Ker(\pi (1\coprod p))\rightarrow 0$. In this way, we have 
an exact sequence:
\[
 \mathcal{C}(-,E_{j-2}')\coprod \mathcal{C}(-,E_{j-1}'')\rightarrow \mathcal{C}(-,E_j)\coprod \mathcal{C}(-,E_{j-1}') 
 \xrightarrow{\pi(1\coprod p)}I_{\mathcal{B}_{j}}(-,X)\rightarrow 0
\]
\bigskip

 It only remains to prove necessity. First, observe that $\frac{\mathcal{C}(?,E_{j-1})}{I_{\mathcal{B}_{j-1}}(?,E_{j-1})}\cong 0$ because  $E_{j-1}$ lies in $\mathcal{B}_{j-1}$ and therefore $\mathcal{C}(-,E_{j-1})=I_{\mathcal{B}_{j-1}}(-,E_{j-1})$.  After applying $-\otimes\frac{\mathcal{C}(?,-)}{I_{\mathcal{B}_{j-1}}(?,-)}$ to the
 exact sequence:
  \[
 \mathcal{C}(-,E_{j-1})\rightarrow \mathcal{C}(-,E_{j})\rightarrow I_{\mathcal{B}_j}(-,X)\rightarrow 0
\]
and using the isomorphisms $\mathcal{C}(-,E_{j-1})\otimes\frac{\mathcal{C}(?,-)}{I_{\mathcal{B}_{j-1}}(?,-)}\cong \frac{\mathcal{C}(?,E_{j-1})}{I_{\mathcal{B}_{j-1}}(?,E_{j-1})}\cong 0$ and  $\mathcal{C}(-,E_j)\otimes\frac{\mathcal{C}(?,-)}{I_{\mathcal{B}_{j-1}}(?,-)}\cong \frac{\mathcal{C}(?,E_j)}{I_{\mathcal{B}_{j-1}}(?,E_j)}$, we obtain the isomorphism:
\[
\frac{\mathcal{C}(?,E_j)}{I_{\mathcal{B}_{j-1}}(?,E_j)}\cong  I_{\mathcal{B}_j}(-,X)\otimes\frac{\mathcal{C}(?,-)}{I_{\mathcal{B}_{j-1}}(?,-)}
\]
\bigskip

Finally, we prove that $ I_{\mathcal{B}_j}(-,X)\otimes\frac{\mathcal{C}(?,-)}{I_{\mathcal{B}_{j-1}}(?,-)}\cong 
\frac{I_{\mathcal{B}_j}(?,X)}{I_{\mathcal{B}_{j-1}}(?,X)} $. First, we prove the following isomorphism:
\begin{eqnarray}\label{qher3}
I_{\mathcal{B}_{j-1}}(?,X)\cong  I_{\mathcal{B}_{j}}(-,X)\otimes
 I_{\mathcal{B}_{j-1}}(?,-).
\end{eqnarray}
Indeed after appliying $ I_{\mathcal{B}_{j}}(-,X)\otimes -$ to the exact sequence of $\mathcal C ^{op}$-modules
\[
 \mathcal{C}(E_{j-2}',-)\rightarrow \mathcal{C}(E_{j-1}',-)\rightarrow I_{\mathcal{B}_{j-1}}(?,-) \rightarrow 0
\]
and
using the isomorphisms 
$ I_{\mathcal{B}_{j}}(-,X)\otimes \mathcal{C}(E_{j-2}',-)\cong I_{\mathcal{B}_{j}}(E_{j-2}',X)=\mathcal{C}(E_{j-2}',X)$
and $I_{\mathcal{B}_{j}}(-,X)\otimes\mathcal{C}(E_{j-1}',-)  \cong I_{\mathcal{B}_{j}}(E_{j-1}',X)=\mathcal{C}(E_{j-1}',X)$ (
 because $E_{j-1}', E_{j-2}'\in\mathcal{B}_j$), 
we conclude that   $I_{\mathcal{B}_{j}}(-,X)\otimes I_{\mathcal{B}_{j-1}}(?,-) \cong I_{\mathcal{B}_{j-1}}(?,X)$.
\bigskip

On the other hand, after appliying $ I_{\mathcal{B}_{j}}(-,X)\otimes-$  to the exact sequence
$  0\rightarrow I_{\mathcal{B}_{j-1}}(?,-)\rightarrow \mathcal{C}(?,-)\rightarrow
\frac{\mathcal{C}(?,-)}{I_{\mathcal{B}_{j-1}}(?,-)}\rightarrow 0$
and by using the isomorphism (\ref{qher3}) and $ I_{\mathcal{B}_{j}}(-,X)\otimes \mathcal{C}(?,-)\cong I_{\mathcal{B}_{j}}(?,X)$, we
obtain the following commutative diagram: 
\[
\dgARROWLENGTH=.3em
\dgTEXTARROWLENGTH=.5em
 \begin{diagram}
 \node{}
  \node{I_{\mathcal{B}_{j-1}}(?,X)}\arrow{e,t}{}\arrow{s,l}{=}
   \node{I_{\mathcal{B}_{j}}(?,X)}\arrow{e,t}{}\arrow{s,l}{=}
    \node{I_{\mathcal{B}_{j}}(-,X)\otimes  \frac{\mathcal{C}(?,-)}{I_{\mathcal{B}_{j-1}}(?,-)}} \arrow{e,t}{}\arrow{s,l}{}
     \node{0}\\
     \node{0}\arrow{e,t}{}
  \node{I_{\mathcal{B}_{j-1}}(?,X)}\arrow{e,t}{}
   \node{I_{\mathcal{B}_{j}}(?,X)}\arrow{e,t}{}
    \node{  \frac{I_{\mathcal{B}_{j}}(?,X)}{I_{\mathcal{B}_{j-1}}(?,X)}} \arrow{e,t}{}
     \node{0,}
 \end{diagram}
\]
 and the isomorphism follows.
\end{proof}

\subsection{ The standard and costandard subcategories of $\mathcal{C}$-modules}

      Let $\{\mathcal{B}_j\}_{j\ge 0}$  be a  exhaustive filtration of $\mathcal{C}$ into subcategories which are closed under direct summands: 
\begin{equation}\label{filtration}
   0=\mathcal{B}_0\subset \mathcal{B}_1\subset \cdots\subset \mathcal{B}_{j-1}\subset \mathcal{B}_j\subset\cdots\subset \mathcal{C}
\end{equation}
\bigskip

In the rest of this section, all the filtrations  $\{\mathcal{B}_j\}_{j\ge 0}$ we consider are filtrations  into subcategories that are closed under direct summands.

\bigskip

 We introduce the concept of standard and costandard   subcategories of $\mathcal C$-modules with respect to the given filtration $\{\mathcal{B}_{j\ge 0}\}_{j\ge 0}$.
\begin{definition}\cite{Dlab}
 The sequence
 \[
  \Delta=\Delta_{\mathcal C}=\{\Delta(j):j\ge 1\}
 \]
  of (contravariant) \emph{ standard subcategories} with respect to  a given
   filtration  $\{\mathcal{B}_j\}_{j\ge 0}$  is given by 
  \[
   \Delta(j)=\{\frac{\mathcal{C}(-,E)}{I_{\mathcal{B}_{j-1}}(-,E)}|E\in\mathcal{B}_j\}
  \]

  Similarly, there is a sequence 
  \[
  \Delta^{\circ}=\Delta_{\mathcal C}^{\circ}=\{\Delta^{\circ}(j):j\ge 1\}
 \]
  of covariant standard subcategories  $ \Delta^{\circ}(j)=\{\frac{\mathcal{C}(E,-)}{I_{\mathcal{B}_{j-1}}(E,-)}|E\in\mathcal{B}_j\}$ 
  and the sequence of its
  duals, the (contravariant) costandard subcategories $\nabla=\nabla_{\mathcal{C}}=\{D\Delta^{\circ}(j):j\ge 1\}$ where
  \[
   D\Delta^{\circ}(j)=\{D \frac{\mathcal{C}(E,-)}{I_{\mathcal{B}_{j-1}}(E,-)}|E\in\mathcal{B}_j\}.
     \]
     
  Call the sequence $\Delta$  \emph{Schurian} if  every $\Delta_X(j)$  is Schurian, in other words  $\mathrm{End}(\Delta_X(j))$ is a division
  algebra for all $j\ge 1$, with $X\in\mathcal{B}_j$ indecomposable. 
  \end{definition}
   
  Let us describe some of the basic propierties of the standard and costandard subcategories. In the rest of this subsection we asumme that $\mathcal{C}$ is a 
  quasi-hereditary category with respect to the filtration (\ref{filtration}).

  \begin{lemma}\label{DeltaLemma1}
1) If $\mathrm{Hom}(\Delta(i),\Delta(j))\neq 0$ implies $i\ge j$. 2) $\mathrm{Ext}^1(\Delta(i),\Delta(j))\neq 0$ implies $i>j$. 3)  The sequence  $\Delta$ is Schurian
\end{lemma}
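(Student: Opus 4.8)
The plan is to prove the three statements in order, exploiting the quasi-hereditary structure via Theorem \ref{TheoremQh} and the pushout description of the standard subcategories coming from Lemma \ref{quotient01}. Throughout, write $P_E = \mathcal{C}(-,E)$ for the projective associated to an indecomposable $E$, and recall that for $E \in \mathcal{B}_j$ the object $\Delta_E(j) = P_E / I_{\mathcal{B}_{j-1}}(-,E)$ sits in a short exact sequence $0 \to I_{\mathcal{B}_{j-1}}(-,E) \to P_E \to \Delta_E(j) \to 0$, and that by Theorem \ref{TheoremQh}(ii) the submodule $I_{\mathcal{B}_{j-1}}(-,E)$ is generated by projectives $\mathcal{C}(-,E')$ with $E' \in \mathcal{B}_{j-1}$.

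For (1): I would take $X \in \mathcal{B}_i$, $Y \in \mathcal{B}_j$ indecomposable and a nonzero $\mathcal{C}/I_{\mathcal{B}_{i-1}}$-map $\varphi : \Delta_X(i) \to \Delta_Y(j)$. Lift along the projective cover $P_X \twoheadrightarrow \Delta_X(i)$ to get $\tilde\varphi : P_X \to \Delta_Y(j)$; by Yoneda this is an element of $\Delta_Y(j)(X) = \mathcal{C}(X,Y)/I_{\mathcal{B}_{j-1}}(X,Y)$, represented by some $g : X \to Y$. If $i < j$, then $X \in \mathcal{B}_i \subseteq \mathcal{B}_{j-1}$, so $g \in I_{\mathcal{B}_{j-1}}(X,Y)$ (as $g$ factors through $X$ itself, which lies in $\mathcal{B}_{j-1}$), hence $\tilde\varphi = 0$ and therefore $\varphi = 0$ — contradiction. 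Thus $i \ge j$. This is the easy part.

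For (2): suppose $\mathrm{Ext}^1(\Delta_X(i), \Delta_Y(j)) \ne 0$ with $i \le j$; I will derive a contradiction. The case $i = j$ reduces to $i > j$ being forced, so assume $i < j$, i.e. $X \in \mathcal{B}_{j-1}$. Apply $\mathrm{Hom}(-, \Delta_Y(j))$ to $0 \to I_{\mathcal{B}_{i-1}}(-,X) \to P_X \to \Delta_X(i) \to 0$; since $P_X$ is projective, $\mathrm{Ext}^1(\Delta_X(i),\Delta_Y(j))$ is a quotient of $\mathrm{Hom}(I_{\mathcal{B}_{i-1}}(-,X), \Delta_Y(j))$. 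Now $I_{\mathcal{B}_{i-1}}(-,X)$ is generated by projectives $\mathcal{C}(-,E')$ with $E' \in \mathcal{B}_{i-1} \subseteq \mathcal{B}_{j-1}$, so by Yoneda any map from it into $\Delta_Y(j)$ is built from elements of $\Delta_Y(j)(E') = \mathcal{C}(E',Y)/I_{\mathcal{B}_{j-1}}(E',Y)$, which vanishes exactly as in part (1) because $E' \in \mathcal{B}_{j-1}$. Hence $\mathrm{Hom}(I_{\mathcal{B}_{i-1}}(-,X), \Delta_Y(j)) = 0$, forcing the Ext group to vanish — contradiction. So $i > j$. The main obstacle here is being careful that the generating projectives of $I_{\mathcal{B}_{i-1}}(-,X)$ really do index objects of $\mathcal{B}_{j-1}$ and that "$\mathrm{Ext}^1$ is a quotient of $\mathrm{Hom}$ from the first syzygy" is legitimate in $\mathrm{Mod}(\mathcal{C})$, which it is since representables are projective and the sequence is a genuine projective presentation.

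For (3): fix $X \in \mathcal{B}_j$ indecomposable; I must show $\mathrm{End}(\Delta_X(j)) = \mathrm{End}_{\mathcal{C}/I_{\mathcal{B}_{j-1}}}(P_X/I_{\mathcal{B}_{j-1}}(-,X))$ is a division algebra. By Yoneda, $\mathrm{End}(\Delta_X(j)) \cong \Delta_X(j)(X) = \mathcal{C}(X,X)/I_{\mathcal{B}_{j-1}}(X,X)$. If $X$ is not a summand of any object of $\mathcal{B}_{j-1}$, then Theorem \ref{TheoremQh}(i) (applied with $E = E' = X$) gives $\mathrm{rad}\,\mathcal{C}(X,X) = I_{\mathcal{B}_{j-1}}(X,X)$, so the quotient is $\mathrm{End}_{\mathcal{C}}(X)/\mathrm{rad}\,\mathrm{End}_{\mathcal{C}}(X)$, which is a division ring since $X$ is indecomposable and (by the $\mathrm{Hom}$-finite $K$-category hypothesis, $K$ algebraically closed) equals $K$ — in particular Schurian. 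If instead $X$ is a summand of some object of $\mathcal{B}_{j-1}$, then $\mathcal{C}(-,X) = I_{\mathcal{B}_{j-1}}(-,X)$, so $\Delta_X(j) = 0$ and there is nothing to prove. This settles that every $\Delta_X(j)$ is Schurian. I expect the only subtlety is the bookkeeping distinction between indecomposables of $\mathcal{B}_j$ that do versus do not already appear as summands in $\mathcal{B}_{j-1}$; the nontrivial standard objects are exactly the former, and for those Theorem \ref{TheoremQh}(i) applies verbatim.
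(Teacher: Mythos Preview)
Your proof is correct and follows essentially the same approach as the paper: apply $\mathrm{Hom}(-,\Delta_Y(j))$ to the defining short exact sequence $0\to I_{\mathcal{B}_{i-1}}(-,X)\to \mathcal{C}(-,X)\to \Delta_X(i)\to 0$ and use Yoneda together with $\Delta_Y(j)(E')=0$ for $E'\in\mathcal{B}_{j-1}$. Two cosmetic points: in part (2) your argument already handles all of $i\le j$ uniformly (you only use $\mathcal{B}_{i-1}\subseteq\mathcal{B}_{j-1}$, never $X\in\mathcal{B}_{j-1}$), so the aside about the case $i=j$ is superfluous; and in part (3) the isomorphism $\mathrm{End}(\Delta_X(j))\cong\Delta_X(j)(X)$ is not Yoneda alone but Yoneda plus the fact that $I_{\mathcal{B}_{j-1}}$ is a two-sided ideal (so any $\mathcal{C}(-,X)\to\Delta_X(j)$ automatically kills $I_{\mathcal{B}_{j-1}}(-,X)$) --- the paper instead deduces this from the vanishing of $(I_{\mathcal{B}_{j-1}}(-,X),\Delta_X(j))$ established in the course of proving part (2).
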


\begin{proof}

Set $\Delta_Y(j)=\mathcal{C}(-,Y)/I_{\mathcal{B}_{j-1}}(-,Y)$, $Y\in\mathcal{B}_j$ and 
$\Delta_X(i)=\mathcal{C}(-,X)/I_{\mathcal{B}_{i-1}}(-,X)$, $ X\in\mathcal{B}_i$. 
After applying $(-,\Delta_E(j))$ to the sequence   $0\rightarrow I_{\mathcal{B}_{i-1}}(-,X)\rightarrow\mathcal{C}(-,X)\rightarrow \Delta_X(i)\rightarrow 0$, we get by the long homology sequence the exact sequence 
\begin{eqnarray}\label{Delta1}
0\rightarrow (\Delta_X(i),\Delta_Y(j))\rightarrow (\mathcal{C}(-,X),\Delta_Y(j))\rightarrow (I_{\mathcal{B}_{i-1}}(-,X),\Delta_Y(j))\\
\rightarrow \mathrm{Ext}^1(\Delta_X(i),\Delta_Y(j))\rightarrow   \mathrm{Ext}^1((\mathcal{C}(-,X),\Delta_Y(j))=0.\notag 
\end{eqnarray}
 \textbf{1)} Let us assume $i < j$, then $i\le j-1$ and    $\mathcal{B}_i\subset \mathcal{B}_{j-1}$.  It is enough to prove that
 $(\mathcal{C}(-,X),\Delta_Y(j))=0$. Since $X$ is an object in $\mathcal{B}_i$,   it follows that $X$ is an object in $\mathcal{B}_{j-1}$
and 
 \[
(\mathcal{C}(-,X),\Delta_Y(j))=\frac{\mathcal{C}(X,Y)}{I_{\mathcal{B}_{j-1}}(X,Y) }=0.
\]
Therefore, (\ref{Delta1}) implies $\mathrm{Hom}(\Delta_X(i),\Delta_Y(j))=0$.  

\textbf{2)}  Let us assume $i \le  j$, then $\mathcal{B}_{i-1}\subset \mathcal{B}_{j-1}$. It is enough to prove that
$(I_{\mathcal{B}_i}(-,X),\Delta_Y(j))=0$. There is an exact sequence 
\begin{eqnarray}\label{Delta2}
(-,E_{i-2})\rightarrow (-,E_{i-1})\rightarrow I_{\mathcal{B}_{i-1}}(-,X)\rightarrow 0, 
\end{eqnarray}
with $E_{i-1}\in\mathcal{B}_{i-1},  E_{i-2}\in\mathcal{B}_{i-2}$.

After applying $(-,\Delta_Y(j))$ to the sequence (\ref{Delta2}),  we get the exact sequence
\[
0\rightarrow (I_{\mathcal{B}_i}(-,X),\Delta_Y(j))\rightarrow ((-,E_{i-1}),\Delta_Y(j))
\rightarrow  ((-,E_{i-2}),\Delta_Y(j))
\] 
Since $E_{i-1}\in\mathcal{B}_{i-1}\subset \mathcal{B}_{j-1}$, we have  $(\mathcal{C}(-,E_{i-1}),\Delta_Y(j))=0$, which
 implies
$(I_{\mathcal{B}_i}(-,X),\Delta_Y(j))=0$.  Thus (\ref{Delta1}) implies $\mathrm{Ext}^1(\Delta_X(i),\Delta_Y(j))=0$.

\textbf{3)} Let $X\in\mathcal{B}_i-\mathcal{B}_{i-1}$ indescomposable. It follows from the exact sequence (\ref{Delta1}) and  part 2) of Lemma \ref{DeltaLemma1} that 
 $(\Delta_X(i),\Delta_X(i))\cong  (\mathcal{C}(-,X),\Delta_X(i))=\frac{\mathcal{C}(X,X)}{I_{\mathcal{B}_{i-1}}(X,X)}$, however, by  Theorem \ref{TheoremQh} we have $I_{\mathcal{B}_{i-1}}(X,X)\cong \mathrm{rad}(X,X)$. 
 Thus $\mathrm{End}(\Delta_X(i))=\mathrm{End}_{\mathcal{C}}(X)/\mathrm{rad}(\mathrm{End}_{\mathcal{C}}(X))$ is 
 division algebra.
 \end{proof}

\begin{lemma}\label{DeltaNabla}
1) If $\mathrm{Hom}(\Delta(i),\nabla(j))\neq 0$ implies $i= j$. 2) $\mathrm{Ext}^1(\Delta(i),\nabla(j))= 0$  for all  $i,j$. 
  2') $\mathrm{Tor}_1(\Delta(i),\Delta^\circ (j))= 0$ for all  $i,j$.
\end{lemma}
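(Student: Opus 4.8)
The plan is to reduce all of 1), 2) and 2$'$) to the vanishing of a tensor product over $\mathcal{C}$, and then to read that vanishing off the tautological presentations of the (co)standard subcategories. Fix $X\in\mathcal{B}_i$ and $Y\in\mathcal{B}_j$, and write $\Delta_X(i)=\mathcal{C}(-,X)/I_{\mathcal{B}_{i-1}}(-,X)\in\mathrm{Mod}(\mathcal{C})$, $\Delta^{\circ}_Y(j)=\mathcal{C}(Y,-)/I_{\mathcal{B}_{j-1}}(Y,-)\in\mathrm{Mod}(\mathcal{C}^{op})$, so that $\nabla_Y(j)=D\Delta^{\circ}_Y(j)$. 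First I would record the \emph{duality formula}: for $M\in\mathrm{Mod}(\mathcal{C})$, $N\in\mathrm{Mod}(\mathcal{C}^{op})$ there is a natural isomorphism $\mathrm{Hom}_{\mathcal{C}}(M,DN)\cong D(N\otimes_{\mathcal{C}}M)$, checked on representables (both sides are $D(N(C))$) and extended to arbitrary $M$ along a projective presentation, since $D$ is exact and sends coproducts to products. Taking a projective resolution of $M$ and using exactness of $D$ gives $\mathrm{Ext}^{k}_{\mathcal{C}}(M,DN)\cong D\,\mathrm{Tor}^{\mathcal{C}}_{k}(N,M)$ for all $k$. With $M=\Delta_X(i)$, $N=\Delta^{\circ}_Y(j)$ this turns 1) into ``$\Delta^{\circ}_Y(j)\otimes_{\mathcal{C}}\Delta_X(i)=0$ when $i\neq j$'' and turns both 2) and 2$'$) (the order of the arguments of $\mathrm{Tor}$ being immaterial) into ``$\mathrm{Tor}^{\mathcal{C}}_{1}(\Delta^{\circ}_Y(j),\Delta_X(i))=0$ for all $i,j$''. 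So it suffices to establish these two tensor statements.

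Next I would tensor the two tautological short exact sequences
\[
0\to I_{\mathcal{B}_{i-1}}(-,X)\to\mathcal{C}(-,X)\to\Delta_X(i)\to 0,\qquad 0\to I_{\mathcal{B}_{j-1}}(Y,-)\to\mathcal{C}(Y,-)\to\Delta^{\circ}_Y(j)\to 0
\]
(the first in $\mathrm{Mod}(\mathcal{C})$, the second in $\mathrm{Mod}(\mathcal{C}^{op})$) with $\Delta^{\circ}_Y(j)\otimes_{\mathcal{C}}-$ and with $-\otimes_{\mathcal{C}}\Delta_X(i)$ respectively. Since representable functors are projective and $\mathrm{Tor}$ over $\mathcal{C}$ is balanced, and since $\Delta^{\circ}_Y(j)\otimes\mathcal{C}(-,X)=\Delta^{\circ}_Y(j)(X)$ and $\mathcal{C}(Y,-)\otimes\Delta_X(i)=\Delta_X(i)(Y)$, both long exact sequences collapse to
\[
\mathrm{Tor}_{1}(\Delta^{\circ}_Y(j),\Delta_X(i))=\ker\!\big(\Delta^{\circ}_Y(j)\otimes I_{\mathcal{B}_{i-1}}(-,X)\to\Delta^{\circ}_Y(j)(X)\big)=\ker\!\big(I_{\mathcal{B}_{j-1}}(Y,-)\otimes\Delta_X(i)\to\Delta_X(i)(Y)\big),
\]
with $\Delta^{\circ}_Y(j)\otimes\Delta_X(i)$ the cokernel of the first (equivalently the second) map.

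Now comes the elementary input. Any morphism into or out of an object $Z\in\mathcal{B}_m$ factors through $Z$, hence lies in $I_{\mathcal{B}_m}$; therefore $\Delta^{\circ}_Y(j)(Z)=0$ whenever $Z\in\mathcal{B}_{j-1}$, and $\Delta_X(i)(Z)=0$ whenever $Z\in\mathcal{B}_{i-1}$. By Lemma \ref{IdempotentIdeal} (applied to $\mathcal{C}$, respectively to $\mathcal{C}^{op}$), $I_{\mathcal{B}_{i-1}}(-,X)$ is an epimorphic image of a coproduct of representables $\mathcal{C}(-,E)$ with $E\in\mathcal{B}_{i-1}$, and $I_{\mathcal{B}_{j-1}}(Y,-)$ of a coproduct of $\mathcal{C}(E,-)$ with $E\in\mathcal{B}_{j-1}$. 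Because $\otimes$ is right exact and preserves coproducts, whenever $i\le j$ we have $\mathcal{B}_{i-1}\subseteq\mathcal{B}_{j-1}$, so $\Delta^{\circ}_Y(j)(E)=0$ for every relevant $E$ and thus $\Delta^{\circ}_Y(j)\otimes I_{\mathcal{B}_{i-1}}(-,X)=0$; symmetrically, whenever $j\le i$ we get $I_{\mathcal{B}_{j-1}}(Y,-)\otimes\Delta_X(i)=0$.

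It remains to assemble the pieces. Each pair $(i,j)$ has $i\le j$ or $j\le i$, so one of the two groups displayed above is zero, whence $\mathrm{Tor}_{1}(\Delta^{\circ}_Y(j),\Delta_X(i))=0$; since $X\in\mathcal{B}_i$, $Y\in\mathcal{B}_j$ were arbitrary this is 2$'$), and the duality formula then gives $\mathrm{Ext}^{1}(\Delta_X(i),\nabla_Y(j))\cong D\,\mathrm{Tor}_{1}(\Delta^{\circ}_Y(j),\Delta_X(i))=0$, which is 2). If moreover $i<j$ then $X\in\mathcal{B}_i\subseteq\mathcal{B}_{j-1}$ forces $\Delta^{\circ}_Y(j)(X)=0$, and if $i>j$ then $Y\in\mathcal{B}_j\subseteq\mathcal{B}_{i-1}$ forces $\Delta_X(i)(Y)=0$; either way the relevant cokernel is $0$, so $\Delta^{\circ}_Y(j)\otimes\Delta_X(i)=0$ and hence $\mathrm{Hom}(\Delta_X(i),\nabla_Y(j))\cong D(\Delta^{\circ}_Y(j)\otimes\Delta_X(i))=0$ for $i\neq j$ — i.e. a nonzero morphism from $\Delta(i)$ to $\nabla(j)$ forces $i=j$, which is 1). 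The steps requiring the most care are purely formal: quoting the two standard functor-category facts (the isomorphism $\mathrm{Hom}_{\mathcal{C}}(M,DN)\cong D(N\otimes_{\mathcal{C}}M)$ and the balancedness of $\mathrm{Tor}$), and keeping the variances straight so that $\Delta^{\circ}_Y(j)$ consistently occupies the $\mathrm{Mod}(\mathcal{C}^{op})$-slot of the tensor product.
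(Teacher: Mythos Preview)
Your proof is correct and takes a genuinely different route from the paper. The paper argues directly on the $\mathrm{Hom}$ and $\mathrm{Ext}$ side: it applies $(-,\nabla_Y(j))$ to the defining sequence of $\Delta_X(i)$ and separately applies $(\Delta_X(i),-)$ to the sequence $0\to\nabla_Y(j)\to D\mathcal{C}(Y,-)\to DI_{\mathcal{B}_{j-1}}(Y,-)\to 0$, obtaining two long exact sequences (one for the case $i\le j$, one for $j\le i$); the vanishing of the relevant terms is then read off using the finite presentation of $I_{\mathcal{B}_{i-1}}(-,X)$ supplied by Theorem~\ref{TheoremQh}. You instead invoke the tensor--Hom duality $\mathrm{Hom}_{\mathcal{C}}(M,DN)\cong D(N\otimes_{\mathcal{C}}M)$ and its derived form to convert all three claims into statements about $\Delta^{\circ}_Y(j)\otimes_{\mathcal{C}}\Delta_X(i)$ and $\mathrm{Tor}_1$, then kill the tensor products using only Lemma~\ref{IdempotentIdeal} (the epimorphism from a coproduct of representables indexed by $\mathcal{B}_{i-1}$). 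Your approach is more uniform---2) and 2$'$) become literally the same statement, and the two cases $i\le j$, $j\le i$ are handled symmetrically---and it reveals that the lemma does not actually require the quasi-hereditary hypothesis via Theorem~\ref{TheoremQh}, only the tautological presentation of $I_{\mathcal{B}}(-,X)$. The paper's approach, on the other hand, avoids having to set up the duality and balancedness of $\mathrm{Tor}$ and stays closer to the $\mathrm{Hom}/\mathrm{Ext}$ language used in the rest of the section.
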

\begin{proof}

Set  $\nabla_Y(j)=D\left(\mathcal{C}(Y,-)/I_{\mathcal{B}_{j-1}}(Y,-)\right)$, $Y\in\mathcal{B}_j$ and 
$\Delta_X(i)=\mathcal{C}(-,X)/I_{\mathcal{B}_{i-1}}(-,X)$, $ X\in\mathcal{B}_i$.

After applying $(-,\nabla_Y(j))$ to the sequence   $0\rightarrow I_{\mathcal{B}_{i-1}}(-,X)\rightarrow\mathcal{C}(-,X)\rightarrow \Delta_X(i)\rightarrow 0$, we get by the long homology sequence, the exact sequence 
\begin{eqnarray}\label{Delta3}
0\rightarrow (\Delta_X(i),\nabla_Y(j))\rightarrow (\mathcal{C}(-,X),\nabla_Y(j))\rightarrow (I_{\mathcal{B}_{i-1}}(-,X),\nabla_Y(j))\\
\rightarrow \mathrm{Ext}^1(\Delta_X(i),\nabla_Y(j))\rightarrow   \mathrm{Ext}^1((\mathcal{C}(-,X),\nabla_Y(j))=0.\notag
\end{eqnarray}

After applying $(\Delta_X(i),-)$ to the sequence $0\rightarrow \nabla_Y(j)\rightarrow D\mathcal{C}(Y,-)\rightarrow DI_{\mathcal{B}_{j-1}}(Y,-)\rightarrow 0$, we get by the long homology sequence the exact sequence
\begin{eqnarray}\label{Delta4}
0\rightarrow (\Delta_X(i),\nabla_Y(j))\rightarrow (\Delta_X(i), D\mathcal{C}(Y,-))\rightarrow (\Delta_X(i),DI_{\mathcal{B}_{i-1}}(Y,-))\\
\rightarrow\mathrm{Ext}^1(\Delta_X(i),\nabla_Y(j))\rightarrow \mathrm{Ext}^1 (\Delta_X(i), D\mathcal{C}(Y,-))=0.\notag
\end{eqnarray}

\textbf{1)} First, let us assume $i < j$, then $i\le j-1$ and    $\mathcal{B}_i\subset \mathcal{B}_{j-1}$. Since $X\in\mathcal{B}_{i}\subset\mathcal{B}_{j-1}$, we have  
\begin{eqnarray}\label{Delta5}
(\mathcal{C}(-,X),\nabla(j))\cong D(\frac{\mathcal{C}(Y,X)}{I_{{\mathcal B}_{j-1}}(Y,X)})=0
\end{eqnarray}

It follows from (\ref{Delta3})  and (\ref{Delta5}) that $\mathrm{Hom}(\Delta_X(i),\nabla_Y(j))=0$ when  $i<j$.

On the other hand, let us assume  $j<i$, then $j\le i-1$ and $\mathcal{B}_j\subset\mathcal{B}_{i-1}$. Since $Y\in\mathcal{B}_{j}\subset\mathcal{B}_{i-1}$, we have  
\begin{eqnarray}\label{Delta6}
 (\Delta(i), D\mathcal{C}(Y,-))\cong(\mathcal{C}(Y,-),D\Delta(i))\cong D\frac{\mathcal{C}(Y,X)}{I_{\mathcal{B}_{i-1}}(Y,X)}\cong 0
\end{eqnarray}

It follows from (\ref{Delta4})  and (\ref{Delta6}) that $\mathrm{Hom}(\Delta_X(i),\nabla_Y(j))=0$ when  $j<i$

\textbf{2)}  Assume that $i\le j$, then $\mathcal{B}_i\subset\mathcal{B}_j$. There is an exact sequence 
\[
(-,E_{i-2})\rightarrow (-,E_{i-1})\rightarrow I_{\mathcal{B}_{i-1}}(-,X)\rightarrow 0,
\]
with $ E_{i-1}\in\mathcal{B}_{i-1},  E_{i-2}\in\mathcal{B}_{i-2}$. 
After applying $(-,\Delta(j))$ to the above sequence  we get a monomorphism
\[
0\rightarrow(I_{\mathcal{B}_{i-1}}(-,X),\nabla_Y(j))\rightarrow(\mathcal{C}(-,E_{i-1}),\nabla_Y(j))\cong D\frac{\mathcal{C}(Y,E_{i-1})}{I_{\mathcal{B}_{j-1}}(Y,E_{i-1})}=0
\]
It follows from  (\ref{Delta3}) that  $\mathrm{Ext}^1(\Delta_X(i),\nabla_Y(j))=0$ when $i\le j$. 

Similarly, by using the exact sequence (\ref{Delta4}), we can prove $\mathrm{Ext}^1(\Delta_X(i),\nabla_Y(j))=0$ when $j\le i$.
\end{proof}

\subsection{Trace filtrations. The categories  $\mathcal{F} (\Delta$) and $\mathcal{F} (\nabla$)}
Throughout this subsection $\mathcal{C}$  will be  a $\mathrm{Hom}$-finite  Krull-Schmidt quasi-hereditary category $K$-category $\mathcal{C}$ with an exhaustive filtration $\{\mathcal{B}_i\}_{i\ge 0}$
\[
 0=\mathcal{B} _0\subset \mathcal{B}_1\subset\cdots\subset \mathcal{B}_{j-1}\subset\mathcal{B}_j\subset\cdots\subset 
\mathcal{C}.
\]
\bigskip

 We remember that under these conditions finitely presented functors have projective covers \cite{MVO2}. In this part we introduce some special subcategories related to the described filtrations  in Theorem \ref{TheoremQh}.  First we assume that the filtration $\{\mathcal{B}_i\}_{i\ge 0}$ is  not necessarily finite and we  obtain  general results. After we add the condition that the filtration is finite, obtaining in this way results similar  to those appearing  in \cite{Rin2}.
\bigskip

 For every $\mathcal{C}$-module $F$, there is 
an associated filtration.
\begin{definition}
 Given a $\mathcal{C}$-module $F$, define its trace filtration (with respect to  $\{\mathcal{B}_j\}_{j\ge 0}$)  by
 \[
  0=F^{(0)}\subset F^{(1)}\subset \cdots F^{(j-1)}\subset F^{(j)}\subset\cdots,
 \]
where $F^{(j)}=\mathrm{Tr}_{\{\mathcal{C}(-,E)\}_{E\in\mathcal{B}_j}}F$ and $F=\bigcup_{j\ge 0 }F^{(j)}$.
\end{definition}
\bigskip

In this part, we shall focus on the $\mathcal C$-modules  $F$ whose trace filtrations satisfy the condition that 
$F^{(i)}/F^{(i-1)}$  is a finite direct sum of objects from the category $\Delta(i)$ for every $i\ge 1$. In this case we say that
these $\mathcal{C}$-modules possess a $\Delta$-filtration. Denote the full subcategory of all $\mathcal{C}$-modules with $\Delta$-filtration
by  $\mathcal{F}(\Delta)$. 
\bigskip

First we give some descriptions of the categories of $\mathcal{C}$-modules with a $\Delta$-filtration in the case where the filtration $\{\mathcal{B}_i\}_{i\ge 0}$. Finally, we  consider the finite  case,  obtaining a characterization as that given in \cite{Dlab}. In order to do this, we start with a list of lemmas. 
\bigskip


\begin{lemma}\label{traceFunctor}
 Let $F\in\mathcal{F}(\Delta)$. Then

\begin{itemize}
 \item[(i)]  For all $i\ge 0$, $F^{(i)}$ has a presentation
\begin{eqnarray}\label{resFilt}
 \mathcal{C}(-,E_{i-1})\rightarrow\mathcal{C}(-,E_{i})\rightarrow F^{(i)}\rightarrow 0
\end{eqnarray}
with $E_{i-1}\in\mathcal{B}_{i-1}$ and  $E_{i}\in\mathcal{B}_{i}$.

\item[(ii)] $F^{(i)}\cong \mathcal{C}\otimes_{\mathcal{B}_i}(F|_{\mathcal{B}_i})$, (see Proposition A.3.2 in \cite{Dlab}).
\end{itemize}

\end{lemma}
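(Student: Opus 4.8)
The plan is to prove (i) by induction on $i$, using the trace filtration and the quasi-hereditary hypothesis, and then deduce (ii) as a formal consequence of (i) together with the change-of-categories machinery of Proposition \ref{cap1.9}. For (i), the base case $i=0$ is trivial since $F^{(0)}=0$. For the inductive step I would exploit the short exact sequence
\[
0\to F^{(i-1)}\to F^{(i)}\to F^{(i)}/F^{(i-1)}\to 0,
\]
where by hypothesis $F\in\mathcal F(\Delta)$ the quotient $F^{(i)}/F^{(i-1)}$ is a finite direct sum of objects of $\Delta(i)$, i.e. of functors of the form $\mathcal C(-,E)/I_{\mathcal B_{i-1}}(-,E)$ with $E\in\mathcal B_i$. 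Taking the direct sum of the defining presentations $\mathcal C(-,E_{i-1}')\to\mathcal C(-,E)\to \mathcal C(-,E)/I_{\mathcal B_{i-1}}(-,E)\to 0$ (whose existence is exactly Theorem \ref{TheoremQh}(ii) applied to $X=E$, since $I_{\mathcal B_i}(-,E)=\mathcal C(-,E)$ for $E\in\mathcal B_i$), one gets a presentation of $F^{(i)}/F^{(i-1)}$ with top term $\mathcal C(-,E_i)$, $E_i\in\mathcal B_i$, and first syzygy term $\mathcal C(-,E_{i-1}')$, $E_{i-1}'\in\mathcal B_{i-1}$. Combining this with the inductive presentation of $F^{(i-1)}$ via the horseshoe lemma yields a presentation
\[
\mathcal C(-,E_{i-1}')\amalg \mathcal C(-,E_{i-1}'')\to \mathcal C(-,E_i)\amalg\mathcal C(-,E_{i-1})\to F^{(i)}\to 0,
\]
and absorbing $\mathcal C(-,E_{i-1})$ into the first term (it is projective and maps into the kernel after composing with the epimorphism onto $F^{(i-1)}$) gives the asserted two-term presentation with $E_{i-1}\in\mathcal B_{i-1}$, $E_i\in\mathcal B_i$. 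I expect the bookkeeping in this horseshoe-lemma step — identifying which summands land in $\mathcal B_{i-1}$ versus $\mathcal B_i$ and checking the maps are compatible — to be the main obstacle; it is essentially the same diagram chase that appears in the proof of Theorem \ref{TheoremQh}(ii), so I would model the argument on that.

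For (ii), I would argue that $F^{(i)}$ is projectively presented over $\mathcal B_i$: part (i) gives a presentation $\mathcal C(-,E_{i-1})\to\mathcal C(-,E_i)\to F^{(i)}\to 0$ with both $E_{i-1},E_i\in\mathcal B_i$ (since $\mathcal B_{i-1}\subset\mathcal B_i$), which is precisely the condition for $F^{(i)}$ to lie in the essential image of $\mathcal C\otimes_{\mathcal B_i}-:\mathrm{Mod}(\mathcal B_i)\to\mathrm{Mod}(\mathcal C)$. By the equivalence between $\mathrm{Mod}(\mathcal B_i)$ and $\mathcal C\otimes_{\mathcal B_i}\mathrm{Mod}(\mathcal B_i)$ induced by $\mathrm{res}$ and $\mathcal C\otimes_{\mathcal B_i}-$ (stated after Proposition \ref{cap1.9}), it then suffices to identify $\mathrm{res}\,F^{(i)}=F^{(i)}|_{\mathcal B_i}$ with $F|_{\mathcal B_i}$. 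This last identification holds because for $E\in\mathcal B_i$ the trace $F^{(i)}(E)=\mathrm{Tr}_{\{\mathcal C(-,E')\}_{E'\in\mathcal B_i}}F$ evaluated at $E$ equals $F(E)$: the identity $1_E\in\mathcal C(-,E)(E)$ lifts every element of $F(E)$ into the trace, so the inclusion $F^{(i)}\hookrightarrow F$ is an isomorphism after restricting to $\mathcal B_i$. Applying $\mathcal C\otimes_{\mathcal B_i}-$ to the isomorphism $F^{(i)}|_{\mathcal B_i}\cong F|_{\mathcal B_i}$ and using $\mathcal C\otimes_{\mathcal B_i}(F^{(i)}|_{\mathcal B_i})\cong F^{(i)}$ (valid since $F^{(i)}$ is projectively presented over $\mathcal B_i$) gives $F^{(i)}\cong\mathcal C\otimes_{\mathcal B_i}(F|_{\mathcal B_i})$, as claimed. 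Here the only subtle point is verifying that $F^{(i)}$ genuinely is projectively presented over $\mathcal B_i$ rather than merely finitely presented over $\mathcal C$, but this is immediate from (i) once one notes $E_{i-1}\in\mathcal B_{i-1}\subset\mathcal B_i$.
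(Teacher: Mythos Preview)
Your proposal is correct and follows essentially the same route as the paper: induction on $i$ using the short exact sequence $0\to F^{(i-1)}\to F^{(i)}\to F^{(i)}/F^{(i-1)}\to 0$, the presentation of each $\Delta_E(i)$ coming from Theorem~\ref{TheoremQh}, and a horseshoe argument for (i); then for (ii) the observation that $F^{(i)}$ is projectively presented over $\mathcal B_i$ together with $F^{(i)}|_{\mathcal B_i}\cong F|_{\mathcal B_i}$. One small correction: your ``absorbing $\mathcal C(-,E_{i-1})$ into the first term'' step is both unnecessary and incorrectly justified---after the horseshoe lemma the middle term is $\mathcal C(-,E_i\amalg E_{i-1})$ with $E_i\amalg E_{i-1}\in\mathcal B_i$ (since $\mathcal B_{i-1}\subset\mathcal B_i$ and $\mathcal B_i$ is closed under direct sums), so the presentation is already of the required form without any rearrangement.
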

\begin{proof}
We prove (i) by induction. Since  $I_{\mathcal{B}_0}(-,E)=0$ for all $E\in\mathcal{B}_1$, we have 
$\Delta(1)=\{\mathcal{C}(-,E)|E\in\mathcal{B}_1\}$.  Let $F\in\mathcal{F}(\Delta)$, then $F^{(0)}=0$, and  $F^{(1)}/F^{(0)}=F^{(1)}$ is a finite direct
 sum 
of objects of $\Delta(1)$.
Assume that there exists an exact sequence for $F^{(i-1)}$:
\begin{eqnarray}\label{FDelta11}
\mathcal{C}(-,E_{i-2})\rightarrow \mathcal{C}(-,E_{i-1})\rightarrow F^{(i-1)}\rightarrow 0.
\end{eqnarray}
On the other hand we have an exact sequence 
\begin{eqnarray}\label{FDelta12}
0\rightarrow F^{(i-1)}\rightarrow  F^{(i)}\rightarrow\frac{ F^{(i)}} { F^{(i-1)}}\rightarrow 0.
\end{eqnarray}

Since $F\in\mathcal{F}(\Delta)$, it follows that   $\frac{ F^{(i)}} { F^{(i-1)}}$ is  a  finite direct sum of objects $ \Delta_{E}(i)$, ${E\in\mathcal B_i}$. We
write $\Delta_{E}(i)= \frac{\mathcal{C}(-,E)}{I_{\mathcal{B}_{i-1}}(-,E)}$. Thus, we have an exact sequence for all $E\in\mathcal{B}_i$
\[
0\rightarrow I_{\mathcal{B}_{i-1}}(-,E)\rightarrow\mathcal{C}(-,E)\rightarrow \frac{\mathcal{C}(-,E)}{I_{\mathcal{B}_{i-1}}(-,E)}\rightarrow 0
\]
Furthermore, there is an exact sequence
 \[
 \mathcal{C}(-,E_{i-1}')\rightarrow \mathcal{C}(-,E_{i-1}')\rightarrow I_{\mathcal{B}_{i-1}}(-,E)\rightarrow 0,
 \]
  by Theorem \ref{TheoremQh}.
Thus, we have an exact sequence
\begin{eqnarray}\label{FDelta13}
\mathcal{C}(-,E_{i-1}')\rightarrow\mathcal{C}(-,E)\rightarrow  \Delta_{E}(i)\rightarrow 0
\end{eqnarray}
 for all $E\in\mathcal{B}_i$. 
The desired resolution is obtained from the exact sequences (\ref{FDelta11}), (\ref{FDelta12}) and (\ref{FDelta13}).

(ii) First observe that $(\frac{ F} {F^{(i)}})^{(i)}=0$. Thus $((-,E),\frac{ F} {F^{(i)}})=0$ for all $E\in\mathcal{B}_i$, and therefore
$\frac{ F} {F^{(i)}}(E)=\frac{ F} {F^{(i)}}|_{\mathcal{B}_i}(E)=((-,E),\frac{ F} {F^{(i)}})=0$, for all $E\in\mathcal{B}_i$, i.e, 
$\frac{ F} {F^{(i)}}|_{\mathcal{B}_i}=0$. After then appliying the exact functor $|_{\mathcal{B}_i}$ to the exact sequece 
 $ 0\rightarrow F^{(i)}\rightarrow F\rightarrow \frac{F}{F^{(i)}}\rightarrow 0$, we obtain
 \begin{eqnarray}\label{FDelta14}
 F^{(i)}|_{\mathcal{B}_i}\cong F|_{\mathcal{B}_i}
 \end{eqnarray}

On the other hand, it follows from  (\ref{resFilt}) that $F^{(i)}$ is projectively presented over $\mathcal{B}_i$. 
Thus, $\mathcal{C}\otimes_{\mathcal{B}_i}(F^{(i)}|_{\mathcal{B}_i})\cong F^{(i)}$ (see [Au])  and by using (\ref{FDelta14})  we obtain (ii).
\end{proof}

\begin{lemma}
Assume that $F\in\mathcal{F}(\Delta)$. Then the following holds:
\begin{itemize}
\item[(i)] $F$ is locally finite.
\item[(ii)] $F$ is finitely presented if and only if   $F=F^{(i)}$ for some $i\in\mathbb{N}$.
\item[(iii)] If the filtartion $\{0\}=\mathcal{B}_0\subset\cdots \mathcal{B}_n=\mathcal{C}$  is finite, $\mathcal{F}(\Delta)$ consists of finitely presented functors.

\end{itemize}
\end{lemma}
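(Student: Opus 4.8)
The plan is to prove the three parts of the lemma by systematically exploiting the structural description of $\Delta$-filtrations together with Lemma \ref{traceFunctor}, which tells us each $F^{(i)}$ is finitely presented with an explicit projective presentation $\mathcal{C}(-,E_{i-1})\to\mathcal{C}(-,E_i)\to F^{(i)}\to 0$ and that $F^{(i)}\cong\mathcal{C}\otimes_{\mathcal{B}_i}(F|_{\mathcal{B}_i})$. Throughout, the key observation is that for any object $C\in\mathcal{C}$, since the filtration $\{\mathcal{B}_i\}$ is exhaustive and $\mathcal{C}$ is Krull–Schmidt, the indecomposable summands of $C$ lie in some $\mathcal{B}_{i(C)}$; hence evaluating the trace filtration of $F$ at $C$ stabilises after finitely many steps.

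For \textbf{(i)}, to show $F$ is locally finite I would fix $C\in\mathcal{C}$ and argue that $F(C)$ is a finite-dimensional $K$-vector space. Using the exact sequence $0\to F^{(i-1)}\to F^{(i)}\to F^{(i)}/F^{(i-1)}\to 0$ and the fact that $F^{(i)}/F^{(i-1)}$ is a finite direct sum of objects $\Delta_E(i)=\mathcal{C}(-,E)/I_{\mathcal{B}_{i-1}}(-,E)$, each of which evaluated at $C$ is a subquotient of the finite-dimensional space $\mathcal{C}(C,E)$ (finite by $\mathrm{Hom}$-finiteness), I get that each successive quotient $F^{(i)}(C)/F^{(i-1)}(C)$ is finite-dimensional. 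Then I would pick $i$ large enough that $C\in\mathcal{B}_i$ (all its indecomposable summands are in $\mathcal{B}_i$); one needs to check that $F(C)=F^{(i)}(C)$ for such $i$. This follows because $(F/F^{(i)})^{(i)}=0$, so as computed in the proof of Lemma \ref{traceFunctor}(ii), $(F/F^{(i)})(C)=((-,C),F/F^{(i)})=0$. Hence $F(C)=F^{(i)}(C)$ is a finite extension of finite-dimensional spaces, so finite-dimensional.

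For \textbf{(ii)}, the direction ($\Leftarrow$) is immediate: if $F=F^{(i)}$ then Lemma \ref{traceFunctor}(i) exhibits a finite projective presentation, so $F$ is finitely presented. For ($\Rightarrow$), suppose $F$ is finitely presented, so there is an epimorphism $\coprod_{k=1}^n\mathcal{C}(-,C_k)\twoheadrightarrow F$ with finitely many $C_k\in\mathcal{C}$. Choose $i$ so large that every indecomposable summand of every $C_k$ lies in $\mathcal{B}_i$; then each $\mathcal{C}(-,C_k)=\mathcal{C}(-,C_k)^{(i)}$, so the image of the map — which is $F$ — satisfies $F=F^{(i)}$, since the trace $F^{(i)}=\mathrm{Tr}_{\{\mathcal{C}(-,E)\}_{E\in\mathcal{B}_i}}F$ contains the image of any map from a sum of such representables and here that image is all of $F$.

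For \textbf{(iii)}, if the filtration is finite with $\mathcal{B}_n=\mathcal{C}$, then for any $F\in\mathcal{F}(\Delta)$ we have $F=F^{(n)}$ automatically (since every object of $\mathcal{C}$ lies in $\mathcal{B}_n$, the argument of part (ii) or the relation $F=\bigcup F^{(j)}$ with the chain stabilising at $n$ applies), and then part (ii) gives that $F$ is finitely presented. The main obstacle, modest as it is, is (ii)$\Rightarrow$: one must be careful that finite generation of $F$ through representables of objects of $\mathcal{C}$ — not a priori of objects of $\cup\mathcal{B}_i$ — still forces $F$ into a single trace layer; this is exactly where exhaustiveness of $\{\mathcal{B}_i\}$ together with the Krull–Schmidt property (so finitely many indecomposable summands, each eventually in some $\mathcal{B}_i$, hence all in a common $\mathcal{B}_i$) is essential. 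Everything else is routine bookkeeping with the trace filtration and $\mathrm{Hom}$-finiteness.
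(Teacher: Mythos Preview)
Your proposal is correct and follows essentially the same approach as the paper: both arguments hinge on Lemma~\ref{traceFunctor} and the exhaustiveness of the filtration to place any given object (or the objects in a finite presentation) into some $\mathcal{B}_i$, then identify $F(C)$ with $F^{(i)}(C)$. The only tactical differences are that in (i) the paper reads off $\dim_K F^{(i)}(B)<\infty$ directly from the presentation $\mathcal{C}(-,E_{i-1})\to\mathcal{C}(-,E_i)\to F^{(i)}\to 0$ rather than building it up layer by layer, and in (ii)($\Rightarrow$) the paper invokes the equivalence $F\cong\mathcal{C}\otimes_{\mathcal{B}_i}(F|_{\mathcal{B}_i})\cong F^{(i)}$ for a functor projectively presented over $\mathcal{B}_i$, whereas your trace argument using only finite generation is slightly more elementary and equally valid.
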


\begin{proof}
(i) Let $B\in\mathcal{C}$, then $B\in\mathcal{B}_i$ for some $i\in\mathbb{N}$. By Lemma \ref{traceFunctor} there exists an exact sequence 
$\mathcal{C}(-,E_{i-1})\rightarrow\mathcal{C}(-,E_{i})\rightarrow F^{(i)}\rightarrow 0$. 
If follows that $\dim_K F^{(i)}(B)<\infty$.  Since $F|_{\mathcal{B}_i}\cong F^{(i)}|_{\mathcal{B}_i}$ by (\ref{FDelta14}), we have 
$F(B)=F|_{\mathcal{B}_i}(B) =F^{(i)}(B)$, i.e, $\dim_K F(B)<\infty$.
 
(ii) Assume that there exists an exact sequence
\[
\mathcal{C}(-,C')\rightarrow\mathcal{C}(-,C')\rightarrow F\rightarrow 0,
\]
with $C,C'\in\mathcal{C}$. Then, there exists some $i\ge 0$ such that  $C,C'\in\mathcal{B}_i$. In this way, $F$ is projectively presented
over $\mathcal{B}_i$ and $\mathcal{C}\otimes_{\mathcal B_i}F|_{\mathcal B_i}\cong F$, 
but $F^{(i)}\cong \mathcal{C}\otimes_{\mathcal B_i}(F|_{\mathcal B_i})$ by part (i) of Lemma (\ref{traceFunctor}). 

The other implication  is a consequence  of  part (ii) of (\ref{traceFunctor}).  
(iii)  If  the filtration is finite: $0=\mathcal{B}_0\subset\mathcal{B}_1\subset\cdots\mathcal{B}_n=\mathcal{C}$, we have $F^{(n)}=F$, and the assertion follows from (ii).
\end{proof}
\bigskip

Now we want to have a more specific characterization of $\mathcal{F}(\Delta)$ when the filtration  $0=\mathcal{B}_0\subset\mathcal{B}_1\subset\cdots\mathcal{B}_n=\mathcal{C}$  is finite.  For this we have the following lemma. 
\bigskip

 Assume that $\mathcal{C}$ is a quasi-hereditary with respect to a filtration $\{\mathcal{B}_i\}_{i\ge 0}$ which is not necessarialy finite.

\begin{lemma}\label{Traceexact}
Let $f:F\rightarrow G$ be an epimorphism of finitely presented $\mathcal{C}$-modules, for which there are
 projective presentations  $ \mathcal{C}(-,E_{i-1})\rightarrow \mathcal{C}(-,E_i)\rightarrow F^{(i)}\rightarrow 0$,  $ \mathcal{C}(-,E_{i-1}')\rightarrow \mathcal{C}(-,E_i')\rightarrow G^{(i)}\rightarrow 0$, with  $E_{i-1},E_{i-1}'\in\mathcal{B}_{i-1}$ and $E_{i}, E_{i}'\in\mathcal{B}_i$, for all $i\ge 0$. Then,
 $f$ induces an epimorphism $f^{(i)}:F^{(i)}\rightarrow G^{(i)}$ and the following
 short exact sequence
 \begin{eqnarray}\label{Traceexact11}
  0\rightarrow (\mathrm{Ker}f)^{(i)}/(\mathrm{Ker}f)^{(i-1)}\rightarrow F^{(i)}/(F)^{(i)} \rightarrow G^{(i)}/G^{(i-1)} \rightarrow 0
 \end{eqnarray}
 for all $i\ge 1$.
\end{lemma}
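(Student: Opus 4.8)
The plan is to verify the two assertions in order; everything then comes down to the single identity $(\mathrm{Ker}\,f)^{(j)}=\mathrm{Ker}\,f\cap F^{(j)}$. First I would check that $f$ restricts to an epimorphism $f^{(i)}\colon F^{(i)}\to G^{(i)}$. The inclusion $f(F^{(i)})\subseteq G^{(i)}$ is formal: for $E\in\mathcal{B}_i$ and any $g\colon\mathcal{C}(-,E)\to F$ the composite $fg$ factors through an object of $\mathcal{B}_i$, so $f(\mathrm{Im}\,g)=\mathrm{Im}(fg)\subseteq G^{(i)}$, and $F^{(i)}$ is the sum of all such $\mathrm{Im}\,g$. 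Conversely, given $E\in\mathcal{B}_i$ and $h\colon\mathcal{C}(-,E)\to G$, projectivity of $\mathcal{C}(-,E)$ and surjectivity of $f$ give a lift $\tilde h$ with $f\tilde h=h$, whence $\mathrm{Im}\,h=f(\mathrm{Im}\,\tilde h)\subseteq f(F^{(i)})$; summing over all such $h$ gives $G^{(i)}\subseteq f(F^{(i)})$. Thus $f^{(i)}:=f|_{F^{(i)}}$ is an epimorphism, and since $f^{(i)}|_{F^{(i-1)}}=f^{(i-1)}$ we obtain $f^{(i)}(F^{(i-1)})\subseteq G^{(i-1)}$ and an induced map $\bar f^{(i)}\colon F^{(i)}/F^{(i-1)}\to G^{(i)}/G^{(i-1)}$.

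Write $K=\mathrm{Ker}\,f$. Applying the snake lemma to the diagram with exact rows $0\to F^{(i-1)}\to F^{(i)}\to F^{(i)}/F^{(i-1)}\to 0$ and $0\to G^{(i-1)}\to G^{(i)}\to G^{(i)}/G^{(i-1)}\to 0$ and vertical maps $f^{(i-1)},f^{(i)},\bar f^{(i)}$, and using that $f^{(i-1)}$ and $f^{(i)}$ are surjective, the connecting sequence collapses to a short exact sequence $0\to \mathrm{Ker}\,f^{(i)}/\mathrm{Ker}\,f^{(i-1)}\to F^{(i)}/F^{(i-1)}\xrightarrow{\bar f^{(i)}} G^{(i)}/G^{(i-1)}\to 0$. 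Since $f^{(j)}=f|_{F^{(j)}}$, one has $\mathrm{Ker}\,f^{(j)}=K\cap F^{(j)}$ for every $j$, so it suffices to prove $K\cap F^{(j)}=K^{(j)}:=\mathrm{Tr}_{\{\mathcal{C}(-,E)\}_{E\in\mathcal{B}_j}}K$ for $j=i$ and $j=i-1$ (the case $j=0$ being trivial since $F^{(0)}=0$).

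For this I would use the natural morphism $\lambda_N\colon\mathcal{C}\otimes_{\mathcal{B}_j}(N|_{\mathcal{B}_j})\to N$ given on simple tensors by $\xi\otimes n\mapsto N(\xi)(n)$, whose image is $N^{(j)}$ for every $\mathcal{C}$-module $N$ and which is an isomorphism onto $N^{(j)}$ as soon as $N^{(j)}$ is projectively presented over $\mathcal{B}_j$. By the hypothesis on the presentations of $F^{(j)}$ and $G^{(j)}$ (note that $E_{j-1}\in\mathcal{B}_{j-1}\subseteq\mathcal{B}_j$, so both terms are $\mathcal{B}_j$-projective), this applies to $N=F$ and $N=G$, giving $\mathcal{C}\otimes_{\mathcal{B}_j}(F|_{\mathcal{B}_j})\cong F^{(j)}$ and $\mathcal{C}\otimes_{\mathcal{B}_j}(G|_{\mathcal{B}_j})\cong G^{(j)}$; this is the argument already used for Lemma \ref{traceFunctor}(ii), together with the equality $F^{(j)}|_{\mathcal{B}_j}=F|_{\mathcal{B}_j}$. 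Restricting $0\to K\to F\to G\to 0$ to $\mathcal{B}_j$ (which is exact) and applying the right exact functor $\mathcal{C}\otimes_{\mathcal{B}_j}-$ produces an exact sequence $\mathcal{C}\otimes_{\mathcal{B}_j}(K|_{\mathcal{B}_j})\to F^{(j)}\xrightarrow{f^{(j)}} G^{(j)}\to 0$ (the middle map being $f^{(j)}$ by naturality of $\lambda$). Hence $K\cap F^{(j)}=\mathrm{Ker}\,f^{(j)}$ equals the image of $\mathcal{C}\otimes_{\mathcal{B}_j}(K|_{\mathcal{B}_j})\to F^{(j)}$; on the other hand, naturality of $\lambda$ with respect to $K\hookrightarrow F$ identifies that image with $\mathrm{Im}(\lambda_K)=K^{(j)}$ viewed inside $F$. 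Therefore $K\cap F^{(j)}=K^{(j)}=(\mathrm{Ker}\,f)^{(j)}$, and substituting $j=i$ and $j=i-1$ into the sequence furnished by the snake lemma gives exactly (\ref{Traceexact11}).

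The only genuine obstacle is this last identification: the inclusion $K^{(j)}\subseteq K\cap F^{(j)}$ is immediate, while the reverse inclusion asserts that a morphism into $F^{(j)}$ with image contained in $K$ already factors through $\mathcal{B}_j$-projectives, and this really does require the projective-presentation hypothesis, accessed through the tensor-product description of the trace filtration from Lemma \ref{traceFunctor}.
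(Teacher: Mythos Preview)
Your proof is correct and reduces, as the paper does, to the identity $K\cap F^{(j)}=K^{(j)}$, but you establish that identity by a genuinely different route. The paper argues concretely: it takes \emph{minimal} projective presentations of $F^{(j)}$ and $G^{(j)}$, forms the pullback $L=\mathrm{Ker}\bigl(\mathcal{C}(-,E_j)\to G^{(j)}\bigr)$, uses minimality to split $\mathcal{C}(-,E_j)\cong\mathcal{C}(-,E_j')\coprod Q$ so that $L\cong Q\coprod\Omega G^{(j)}$ is covered by $\mathcal{C}(-,E_j)\coprod\mathcal{C}(-,E'_{j-1})$, and hence obtains an epimorphism from a $\mathcal{B}_j$-projective onto $K\cap F^{(j)}$, forcing $(K\cap F^{(j)})^{(j)}=K\cap F^{(j)}$. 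You instead exploit the adjunction $\mathcal{C}\otimes_{\mathcal{B}_j}-\dashv(-)|_{\mathcal{B}_j}$: the counit $\lambda_N$ always has image $N^{(j)}$, and since $F^{(j)},G^{(j)}$ are $\mathcal{B}_j$-projectively presented the counits at $F$ and $G$ are isomorphisms onto the traces; right-exactness of $\mathcal{C}\otimes_{\mathcal{B}_j}\bigl((-)|_{\mathcal{B}_j}\bigr)$ applied to $0\to K\to F\to G\to 0$ then identifies $\mathrm{Ker}f^{(j)}$ with $\mathrm{Im}\lambda_K=K^{(j)}$ by naturality. Your argument is more functorial and does not need the presentations to be minimal (so the Krull--Schmidt hypothesis is not invoked at this step), while the paper's approach is more hands-on and produces as a by-product an explicit $\mathcal{B}_j$-projective cover of $K\cap F^{(j)}$.
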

\begin{proof}
The morphism $F^{(i)}\rightarrow G^{(i)}$ induced by $f$ is surjective because every map from 
$\mathcal{C}(-,E)\rightarrow G$, $E\in\mathcal{B}_i$, lifts to $F$.  In this way, we  have a commutative diagram
\[
\dgARROWLENGTH=.3em
\dgTEXTARROWLENGTH=.5em
\begin{diagram}
\node{0}\arrow{e,t}{}
 \node{K\cap F^{(i)}}\arrow{e,t}{}\arrow{s,l}{}
  \node{F^{(i)}}\arrow{e,t}{}\arrow{s,l}{}
   \node{G^{(i)}}\arrow{e,t}{}\arrow{s,l}{}
   \node{0}\\
   \node{0}\arrow{e,t}{}
 \node{K}\arrow{e,t}{}
  \node{F}\arrow{e,t}{}
   \node{G}\arrow{e,t}{}
   \node{0.}
\end{diagram}
\]
We show that $K^{(i)}=K\cap F^{(i)}$. Indeed let $E\in\mathcal{B}_i$ and consider any map 
$\psi:\mathcal{C}(-,E)\rightarrow K$. Thus,  the image of $\mathcal{C}(-,E)\xrightarrow{ \psi}K\rightarrow F$
is contained in $F$ and therefore in $F^{(i)}$. Thus $K^{(i)}\subset K\bigcap F^{(i)}$.

On the other hand, since $K\bigcap F^{(i)}\subset K$, we have $(K\bigcap F^{(i)})^{(i)}\subset K^{(i)}$. It is
enough to prove $(K\bigcap F^{(i)})^{(i)}=K\bigcap F^{(i)}$. Consider the following minimal projective presentations
\begin{eqnarray*}
 \mathcal{C}(-,E_{i-1})\rightarrow \mathcal{C}(-,E_i)\rightarrow F^{(i)}\rightarrow 0,\\
 \mathcal{C}(-,E_{i-1}')\rightarrow \mathcal{C}(-,E_i')\rightarrow G^{(i)}\rightarrow 0,
\end{eqnarray*}
with $E_{i-1},E_{i-1}'\in\mathcal{B}_{i-1}$ and $E_{i}, E_{i}'\in\mathcal{B}_i$.
Thus, we have the following commutative diagram:
\[
\dgARROWLENGTH=.3em
\dgTEXTARROWLENGTH=.5em
\begin{diagram}
\node{}
 \node{}
  \node{0}\arrow{s,l}{}
   \node{0}\arrow{s,l}{}
   \node{}
    \node{}\\
\node{}
 \node{0}\arrow{e,t}{}
  \node{\Omega F^{(i)}}\arrow{e,t}{}\arrow{s,l}{}
   \node{L}\arrow{e,t}{}\arrow{s,l}{}
   \node{K\bigcap F^{(i)}} \arrow{e,t}{}
    \node{0}\\
\node{}
 \node{}
  \node{\mathcal{C}(-,E_i)}\arrow{e,t,=}{}\arrow{s,l}{}
   \node{\mathcal{C}(-,E_i)}\arrow{s,l}{}
   \node{}
    \node{}\\
   \node{0}\arrow{e,t}{}
 \node{K\bigcap F^{(i)}}\arrow{e,t}{}
  \node{F^{(i)}}\arrow{e,t}{}
   \node{G^{(i)}}\arrow{e,t}{}
   \node{0.}
    \node{}
\end{diagram}
\]

Thus, $\mathcal{C}(-,E_i')$ is a direct summand of $\mathcal{C}(-,E_i )$, i.e, 
$\mathcal{C}(-,E_i )\cong \mathcal{C}(-,E_i ')\coprod Q$ for some $\mathcal{C}$-module $Q$, and as a consequence  $L\cong Q\coprod \Omega G^{(i)}$. In this
way $L$ can be covered  by $\mathcal{C}(-,E_i )\coprod \mathcal{C}(-,E_{i-1} ')$. Thus,  there is an epimorphism 
\[
 \mathcal{C}(-,E_i )\coprod \mathcal{C}(-,E_{i-1} ')\rightarrow K\bigcap F^{(i)},
\]
and therefore $ (K\bigcap F^{(i)})^{(i)}= K\bigcap F^{(i)}$.

\end{proof}

Assume $\mathcal{C}$ is Noetherian and  $F$ is  a finitely presented functor. Thus, the subfunctor  $F^{(i)}\subset F$ is finitely generated, and therefore finitely presented.  In this way, the existence of a epimorphism $\coprod_{E\in\mathcal{B}_i}(-,B_i)\rightarrow F^{(i)}$ implies the existence of a epimorphism  
$(-,B)\rightarrow F^{(i)}$ with $B\in\mathcal{B}_i$;  morever $F^{(i)}$ has a  presentation   as that required in the conditions of Lemma \ref{Traceexact}. On the other hand if $F$ is in $\mathcal{F}(\Delta)$  we have the same presentation by Lemma \ref{traceFunctor}.
\bigskip

  In the next results, we  have more explicit characterizations of $\mathcal{F}(\Delta)$ when $0=\mathcal{B}_0\subset\mathcal{B}_1\subset\cdots\mathcal{B}_n=\mathcal{C}$  is a finite filtration.

\begin{theorem}\label{FDelta}
\begin{itemize}
\item[(i)]  Assume that $\mathcal{C}$ has a finite filtration 
 $0=\mathcal{B}_0\subset\mathcal{B}_1\subset\cdots\mathcal{B}_n=\mathcal{C}$. Let $F\in\mathcal{F}(\Delta)$, then
 $\mathrm{Ext}^1(F,\nabla)=\mathrm{Tor}_1(F,\Delta^\circ)=0$.
\item[(ii)] $\mathcal{F}(\Delta)$ is closed under kernels of epimorphisms. In adition, if $0=\mathcal{B}_0\subset\mathcal{B}_1\subset\cdots\mathcal{B}_n=\mathcal{C}$ is finite we have
\[
 \mathrm{Ext}^t(F,\nabla)=\mathrm{Tor}_t(F,\Delta^\circ)=0, \ \text{for all } F\in \mathcal{F}(\Delta) \text{ and }t\ge 1.
\]
\end{itemize}
\end{theorem}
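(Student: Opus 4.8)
\textbf{Proof plan for Theorem \ref{FDelta}.}

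The plan is to handle the two parts by induction on the length of the $\Delta$-filtration of $F$, using the trace filtration $0 = F^{(0)} \subset F^{(1)} \subset \cdots$ and the long exact sequences for $\mathrm{Ext}$ and $\mathrm{Tor}$. For part (i), I would argue that it suffices to prove the vanishing when $F$ itself lies in some $\Delta(i)$, i.e., $F = \mathcal{C}(-,E)/I_{\mathcal{B}_{i-1}}(-,E)$ for an indecomposable $E \in \mathcal{B}_i$; the general case then follows because $\mathrm{Ext}^1(-,\nabla)$ and $\mathrm{Tor}_1(-,\Delta^\circ)$ are additive in the first variable and, given the short exact sequence $0 \to F^{(i-1)} \to F^{(i)} \to F^{(i)}/F^{(i-1)} \to 0$ with $F^{(i)}/F^{(i-1)}$ a finite direct sum of objects of $\Delta(i)$, the long exact sequence sandwiches $\mathrm{Ext}^1(F^{(i)},\nabla)$ (resp. $\mathrm{Tor}_1(F^{(i)},\Delta^\circ)$) between two terms that vanish by the induction hypothesis and the base case. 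But the base case—$\mathrm{Ext}^1(\Delta(i),\nabla(j)) = 0$ for all $i,j$ and $\mathrm{Tor}_1(\Delta(i),\Delta^\circ(j)) = 0$ for all $i,j$—is exactly Lemma \ref{DeltaNabla}, parts 2) and 2'). So part (i) reduces entirely to an induction on filtration length plus Lemma \ref{DeltaNabla}; since the filtration is finite, every $F \in \mathcal{F}(\Delta)$ has a finite trace filtration and the induction terminates.

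For part (ii), I would first prove that $\mathcal{F}(\Delta)$ is closed under kernels of epimorphisms. Given an epimorphism $f\colon F \to G$ with $F, G \in \mathcal{F}(\Delta)$, Lemma \ref{Traceexact} (applicable since, the filtration being finite, all such modules are finitely presented and carry the required presentations by Lemma \ref{traceFunctor}) gives the short exact sequences
\[
0 \to (\mathrm{Ker}f)^{(i)}/(\mathrm{Ker}f)^{(i-1)} \to F^{(i)}/F^{(i-1)} \to G^{(i)}/G^{(i-1)} \to 0
\]
for every $i \ge 1$. The right-hand term is a finite direct sum of objects of $\Delta(i)$, and the middle term likewise; since $\mathrm{Ext}^1(G^{(i)}/G^{(i-1)}, \mathrm{objects of }\Delta(i)) $ need not vanish in general, the point is rather that each $\Delta(i)$-object is, by Lemma \ref{DeltaLemma1}(3) and quasi-heredity, relatively projective among the layers of its own degree, so the sequence above splits and $(\mathrm{Ker}f)^{(i)}/(\mathrm{Ker}f)^{(i-1)}$ is a direct summand of a finite sum of $\Delta(i)$-objects, hence again such a sum (using that $\Delta(i)$ is closed under summands by its definition via $\mathcal{B}_i$). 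Thus $\mathrm{Ker}f \in \mathcal{F}(\Delta)$.

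Once closure under kernels of epimorphisms is established, the higher vanishing $\mathrm{Ext}^t(F,\nabla) = \mathrm{Tor}_t(F,\Delta^\circ) = 0$ for all $t \ge 1$ follows by dimension shifting: take a projective cover $0 \to \Omega F \to P \to F \to 0$ with $P$ projective (available since $\mathcal{C}$ is Krull--Schmidt, so finitely presented functors have projective covers); then $P \in \mathcal{F}(\Delta)$ because a finitely generated projective $\mathcal{C}(-,E)$ has the trivial $\Delta$-filtration coming from the filtration $\{\mathcal{B}_i\}$, and $\Omega F = \mathrm{Ker}(P \to F) \in \mathcal{F}(\Delta)$ by the closure just proved. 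Applying $\mathrm{Ext}^{t-1}(-,\nabla)$ and $\mathrm{Tor}_{t-1}(-,\Delta^\circ)$ to this sequence and using $\mathrm{Ext}^{\ge 1}(P,-) = \mathrm{Tor}_{\ge 1}(P,-) = 0$ gives $\mathrm{Ext}^t(F,\nabla) \cong \mathrm{Ext}^{t-1}(\Omega F, \nabla)$ and similarly for $\mathrm{Tor}$; induction on $t$, with base case $t=1$ from part (i), closes the argument. The main obstacle I anticipate is the splitting step in the closure-under-kernels argument: one must check carefully that the layer $F^{(i)}/F^{(i-1)} \to G^{(i)}/G^{(i-1)}$ of $\Delta(i)$-modules is a split epimorphism, which comes down to the fact that after passing to $\mathcal{C}/I_{\mathcal{B}_{i-1}}$ these become projective modules (Lemma \ref{quotient01} and the heredity of $I_{\mathcal{B}_i}/I_{\mathcal{B}_{i-1}}$), so the epimorphism of projectives over the quotient category splits and the splitting pulls back; verifying this identification is where the bulk of the care is needed.
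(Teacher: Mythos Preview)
Your proposal is correct and matches the paper's approach: induction along the trace filtration via Lemma~\ref{DeltaNabla} for part (i), Lemma~\ref{Traceexact} plus projectivity of the $\Delta(i)$-layers over $\mathcal{C}/I_{\mathcal{B}_{i-1}}$ to get the splitting in part (ii), and then dimension shifting through a projective resolution for the higher vanishing. (Your parenthetical that $\mathrm{Ext}^1(G^{(i)}/G^{(i-1)},\Delta(i))$ ``need not vanish in general'' is actually off---Lemma~\ref{DeltaLemma1}(2) gives $\mathrm{Ext}^1(\Delta(i),\Delta(i))=0$---but this is harmless, since your subsequent argument via projectivity over the quotient category is exactly the one the paper uses.)
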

\begin{proof}
(i) It can be proved by induction that $\mathrm{Ext}^1(F^{(i)},\nabla)=0$. The  case $i=1$ is trivial since
 $F^{(1)}$ is projective. Assume that $\mathrm{Ext}^1(F^{(i-1)},\nabla)=0$; hence,  after applying
 $\mathrm{Ext}^1(-,\nabla)$ to the exact sequence 
 $0\rightarrow F^{(i-1)}\rightarrow F^{(i)}\rightarrow F^{(i)}/F^{(i)}\rightarrow 0$ we have 
 $\mathrm{Ext}^1(F^{(i)},\nabla)=0$ by Lemma \ref{DeltaNabla}.
 
 (ii) Let $0\rightarrow\mathrm{Ker} f\rightarrow F\xrightarrow{f} G\rightarrow 0$  be an epimorphism with $F, G\in\mathcal{F}(\Delta)$. For each $i\ge 1$, Lemma \ref{Traceexact}  gives the (split) exact sequence (\ref{Traceexact11}) of projective $\mathcal{C}/I_{\mathcal{B}_i}$-modules. Hence, 
$(\mathrm{Ker}f)^{(i)}/(\mathrm{Ker}f)^{(i-1)}$ is a direct summand of $F^{(i)}/F^{(i-1)}$. Given $F\in\mathcal{F}(\Delta)$
 and an exact sequence  $0\rightarrow F'\rightarrow \mathcal{C}(-,C)\rightarrow F\rightarrow 0$, we have 
 $F'\in\mathcal{F}(\Delta)$ because $\mathcal{C}(-,C)\in\mathcal{F}(\Delta)$ since $\mathcal{C}$ is quasi-hereditary. Therefore,
  $\mathrm{Ext}^{t+1}(F,\nabla)=\mathrm{Ext}^t(F',\nabla)=0$ for all $t\ge 1$, and the proof follows by induction and part (i).
\end{proof}
\bigskip

To obtain a result similar to $\mathrm{mod}(\Lambda)$  for  finite-dimensional algebras  $\Lambda$,  we add the condition that $\mathcal{C}$ is Noetherian;
in this way  if we take $F\in\mathrm{mod}(\mathcal{C})$, then $F^{(i)}\in \mathrm{mod}(\mathcal{C})$ for every $i\ge 1$.
\bigskip

Now we give one of the main results of this section (see Proposition A.2.3 in  \cite{Dlab}).

\begin{theorem}
Assume $\mathcal{C}$ is Noetherian and  $(\mathcal{B})$ is a finite filtration for $\mathcal{C}$. Then,
\[
 \mathcal{F}(\Delta)=\{F|\mathrm{Tor}_1(F,\Delta^{\circ})=0\}.
\]
\end{theorem}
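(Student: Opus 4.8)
The plan is to prove the two inclusions of $\mathcal{F}(\Delta)=\{F\mid \mathrm{Tor}_1(F,\Delta^{\circ})=0\}$ separately. The inclusion $\mathcal{F}(\Delta)\subseteq\{F\mid \mathrm{Tor}_1(F,\Delta^{\circ})=0\}$ is already essentially contained in Theorem \ref{FDelta}(i), since $\Delta^\circ=\bigoplus_j\Delta^\circ(j)$ in the finite case and $\mathrm{Tor}_1(F,\Delta^\circ)=\bigoplus_j\mathrm{Tor}_1(F,\Delta^\circ(j))=0$. So the content is the reverse inclusion: I must show that if $F$ is a $\mathcal{C}$-module (necessarily finitely presented, since $\mathcal{C}$ is Noetherian and $\mathcal{B}$ finite, so one should first check $F\in\mathrm{mod}(\mathcal{C})$, i.e.\ that the statement is tacitly about $F\in\mathrm{mod}(\mathcal{C})$) with $\mathrm{Tor}_1(F,\Delta^\circ)=0$, then the trace filtration $0=F^{(0)}\subset F^{(1)}\subset\cdots\subset F^{(n)}=F$ has each quotient $F^{(i)}/F^{(i-1)}$ a finite direct sum of objects from $\Delta(i)$.

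First I would set up the trace filtration of $F$ with respect to $\{\mathcal{B}_i\}$ and recall that, since $\mathcal{C}$ is Noetherian, each $F^{(i)}$ is finitely presented and (by the discussion preceding Theorem \ref{FDelta}) admits a presentation $\mathcal{C}(-,E_{i-1})\to\mathcal{C}(-,E_i)\to F^{(i)}\to 0$ with $E_{i-1}\in\mathcal{B}_{i-1}$, $E_i\in\mathcal{B}_i$. The key is to analyze the quotient $M_i:=F^{(i)}/F^{(i-1)}$. By construction of the trace filtration, $M_i$ is generated by images of representables $\mathcal{C}(-,E)$ with $E\in\mathcal{B}_i$, and moreover $M_i^{(i-1)}=0$, so $M_i$ is a module over $\mathcal{C}/I_{\mathcal{B}_{i-1}}$ generated in "degree $i$"; equivalently $M_i\cong \mathcal{C}\otimes_{\mathcal{B}_i}(M_i|_{\mathcal{B}_i})$ and $M_i|_{\mathcal{B}_i}$ is a module over $\mathcal{B}_i/I_{\mathcal{B}_{i-1}}$ which (after restricting further) is "concentrated on $\mathcal{B}_i\smallsetminus\mathcal{B}_{i-1}$". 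I would then show that the hypothesis $\mathrm{Tor}_1(F,\Delta^\circ)=0$ forces $M_i$ to be a \emph{projective} $\mathcal{C}/I_{\mathcal{B}_{i-1}}$-module: applying $-\otimes_{\mathcal{C}}\Delta^\circ(i)$ (equivalently $-\otimes$ the relevant quotient bifunctor $\mathcal{C}(?,-)/I_{\mathcal{B}_{i-1}}(?,-)$, cf.\ the computations at the end of the proof of Theorem \ref{TheoremQh}) to the short exact sequences $0\to F^{(i-1)}\to F^{(i)}\to M_i\to 0$ and using a descending/ascending induction together with the vanishing of $\mathrm{Tor}_1(F^{(i-1)},\Delta^\circ)$ and of the higher $\mathrm{Tor}$'s of representables, one gets that $\mathrm{Tor}_1$ of each graded piece with $\Delta^\circ$ vanishes, and then that each $M_i$, being a finitely presented $\mathcal{C}/I_{\mathcal{B}_{i-1}}$-module with $\mathrm{Tor}_1(-,\Delta^\circ(i))=0$ and supported in the top layer, is in fact projective over $\mathcal{C}/I_{\mathcal{B}_{i-1}}$. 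Finally, Lemma \ref{quotient01} (applied with $\mathcal{B}=\mathcal{B}_{i-1}$, $\mathcal{B}'=\mathcal{B}_i$) identifies a finitely generated projective $\mathcal{C}/I_{\mathcal{B}_{i-1}}$-module supported suitably with a finite direct sum of objects $\mathcal{C}(-,E)/I_{\mathcal{B}_{i-1}}(-,E)$, $E\in\mathcal{B}_i$, i.e.\ with a finite direct sum of objects of $\Delta(i)$; hence $F\in\mathcal{F}(\Delta)$.

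The main obstacle I anticipate is the step that turns "$\mathrm{Tor}_1$ with $\Delta^\circ$ vanishes" into "$M_i$ is projective over $\mathcal{C}/I_{\mathcal{B}_{i-1}}$". One must argue that the only obstruction to $M_i$ being projective, detected after moding out by $I_{\mathcal{B}_{i-1}}$, lies in the vanishing against the costandard (equivalently, via $D$, the $\mathrm{Tor}$ against covariant standard) objects — this is where the heredity-chain structure from Theorem \ref{TheoremQh}, the projective presentations of $I_{\mathcal{B}_{j-1}}(-,X)$, and the finiteness of the filtration all enter, and it requires care about which category one is computing $\mathrm{Ext}/\mathrm{Tor}$ in and about passing between $F$ and its layers. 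A clean way to package this is: do induction on the length $n$ of the filtration; for the top layer use that $F/F^{(n-1)}$ is a $\mathcal{C}/I_{\mathcal{B}_{n-1}}$-module on which the quotient category is "hereditary" (the graded piece $I_{\mathcal{B}_n}/I_{\mathcal{B}_{n-1}}$ being heredity), so $\mathrm{Tor}_1$-vanishing against the standards is equivalent to projectivity there; then deduce $F^{(n-1)}\in\mathcal{F}(\Delta)$ with $\mathrm{Tor}_1(F^{(n-1)},\Delta^\circ)=0$ from the long exact sequence and apply the inductive hypothesis over $\mathcal{C}/I_{\mathcal{B}_1}$ or over the truncated filtration. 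I would also double-check the base case and the compatibility of $\mathrm{Tor}$ with the localization/quotient functors, citing the tensor-product computations already performed in the proof of Theorem \ref{TheoremQh}.
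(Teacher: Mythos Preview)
Your overall plan is right, and the forward inclusion via Theorem~\ref{FDelta}(i) is exactly what the paper does. But your inductive architecture for the reverse inclusion is different from the paper's and, as written, has a genuine gap.

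The paper does \emph{not} induct on the length $n$ of the filtration or peel off the top layer. It runs a \emph{reverse induction on the vanishing level}: it proves, for $j=n,n-1,\dots,0$, that $\{F:\mathrm{Tor}_1(F,\Delta^\circ)=0\text{ and }F^{(j)}=0\}\subset\mathcal{F}(\Delta)$. Given $F$ with $F^{(i-1)}=0$, it splits $0\to F^{(i)}\to F\to G\to 0$ with $G^{(i)}=0$, shows $\mathrm{Tor}_1(G,\Delta^\circ)=0$ by a direct computation, applies the inductive hypothesis at level $i$ to get $G\in\mathcal{F}(\Delta)$, then uses $\mathrm{Tor}_2(G,\Delta^\circ)=0$ to deduce $\mathrm{Tor}_1(F^{(i)},\Delta^\circ)=0$. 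The technical heart is then showing that the kernel $H$ of a minimal cover $\coprod\Delta_{E'}(i)\twoheadrightarrow F^{(i)}$ is zero, done by an explicit computation of $H\otimes\Delta^\circ(j)$ for all $j$ and a second induction showing $H(E)=0$ on every $E\in\mathcal{C}$. Everything stays inside $\mathrm{Mod}(\mathcal{C})$; no change of category is needed.

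Your top-down route has the following problem. Peeling off $M_n=F/F^{(n-1)}$ is indeed free: since $\mathcal{B}_n=\mathcal{C}$, the quotient $\mathcal{C}/I_{\mathcal{B}_{n-1}}$ is semisimple, so $M_n$ is a finite sum of $\Delta(n)$'s without using the $\mathrm{Tor}$ hypothesis at all. You then correctly get $\mathrm{Tor}_1(F^{(n-1)},\Delta^\circ)=0$. But now you cannot ``apply the inductive hypothesis over $\mathcal{C}/I_{\mathcal{B}_1}$ or over the truncated filtration'': $F^{(n-1)}$ is not a $\mathcal{C}/I_{\mathcal{B}_1}$-module, and the chain $\mathcal{B}_0\subset\cdots\subset\mathcal{B}_{n-1}$ does not exhaust $\mathcal{C}$, so the length of the filtration of $\mathcal{C}$ has not decreased. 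Restricting to $\mathcal{B}_{n-1}$ would require comparing $\mathrm{Tor}_1^{\mathcal{C}}(F^{(n-1)},\Delta^\circ_{\mathcal{C}})$ with $\mathrm{Tor}_1^{\mathcal{B}_{n-1}}(F^{(n-1)}|_{\mathcal{B}_{n-1}},\Delta^\circ_{\mathcal{B}_{n-1}})$, and for that you would need $F^{(n-1)}$ to be projectively \emph{presented} over $\mathcal{B}_{n-1}$, which you only know once $F^{(n-1)}\in\mathcal{F}(\Delta)$ (Lemma~\ref{traceFunctor}) --- circular. Likewise, your first attempt to get $\mathrm{Tor}_1(M_i,\Delta^\circ)=0$ directly from the layer sequences is circular for $i<n$, since the required $\mathrm{Tor}_2$-vanishing of the adjacent pieces is exactly what is at stake. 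The paper's bottom-up induction sidesteps all of this because at each stage the ``bottom'' piece $F^{(i)}$ (with $F^{(i-1)}=0$) is already supported where the $\Delta(i)$'s live, and the vanishing of $H$ is proved by tensor computations internal to $\mathcal{C}$ rather than by descent to a subcategory.
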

\begin{proof}
By Lemma $\mathcal{F}(\Delta)\subset \{F|\mathrm{Tor}_1(F,\Delta^{\circ})=0\}$.

 Now we will prove by reverse induction that
 \[
  \{F|\mathrm{Tor}_1(F,\Delta^{\circ})=0\text{ and } F^{(j)}=0\}\subset \mathcal{F}(\Delta),
 \]
 for $ j=1,\ldots,n$.
 
The case $j=n$ is trivial. Assume that the statement is true for $j=i$.  Let $F\in\{X|\mathrm{Tor}_1(X,\Delta^{\circ})=0\text{ and } X^{(i-1)}=0\}$. We have the
following exact sequences

\begin{eqnarray}
0\rightarrow F^{(i)}\rightarrow F\rightarrow G\rightarrow 0,\ \ G^{(i)}=0,\label{Fdelta}\\
 0\rightarrow H\rightarrow \coprod \Delta_{E'}(i)\rightarrow F^{(i)}\rightarrow 0, \ \ \text{with\ } H^{(i)}=0 \label{Fdelta1},
\end{eqnarray}
where $\coprod \Delta_{E'}(i)$ is a finite coproduct of $\mathcal{C}$-modules 
$ \Delta_{E'}(i)=\mathcal{C}(-,E')/I_{\mathcal{B}_{i-1}}(-,E')$,  with $E'\in\mathcal{B}_i$.

\textbf{a}) First, we show the existence of the exact  sequence (\ref{Fdelta1}).
 
  Let $\mathcal{C}(-,E_i)\xrightarrow{\varphi} F^{(i)}\rightarrow 0,\ E_i\in\mathcal{B}_i$, be a projective cover. Thus,  we have a 
  commutative diagram
  \[
  \dgARROWLENGTH=.8em
\dgTEXTARROWLENGTH=.5em
   \begin{diagram}
    \node{}
     \node{}
     \node{I_{\mathcal{B}_{i-1}}(-,E_i)}\arrow{e,t}{}\arrow{s,l,J}{}
      \node{F^{(i-1)}=0}\arrow{s,l,J}{}
       \node{}\\
   \node{0}\arrow{e,t}{}
    \node{K}\arrow{e,t}{u}
    \node{\mathcal{C}(-,E_i)}\arrow{e,t}{\varphi}
     \node{F^{(i)}}\arrow{e,t}{}
      \node{0.}
   \end{diagram}
  \]
  
  In this way, we have a commutative diagram
  \[
  \dgARROWLENGTH=.3em
\dgTEXTARROWLENGTH=.5em
   \begin{diagram}
    \node{}
     \node{0}\arrow{s,l}{}
      \node{0}\arrow{s,l}{}
       \node{}
        \node{}\\
   \node{}
    \node{I_{\mathcal{B}_{i-1}}(-,B_i)}\arrow{e,t,=}{}\arrow{s,l}{}
     \node{I_{\mathcal{B}_{i-1}}(-,B_i)}\arrow{s,l}{}
      \node{}
       \node{}\\
   \node{0}\arrow{e,t}{}
    \node{K}\arrow{e,t}{u}\arrow{s,l}{\pi}
     \node{\mathcal{C}(-,E_i)}\arrow{e,t}{\varphi}\arrow{s,l}{\pi'}
      \node{F^{(i)}}\arrow{e,t}{}\arrow{s,l,=}{}
       \node{0}\\
    \node{0}\arrow{e,t}{}
    \node{H}\arrow{e,t}{u'}\arrow{s,l}{}
     \node{\coprod\Delta_{E'}(i)}\arrow{e,t}{\varphi}\arrow{s,l}{}
      \node{F^{(i)}}\arrow{e,t}{}\arrow{s,l}{}
       \node{0}\\
      \node{}
     \node{0}
      \node{0}
       \node{0,}
        \node{}  
      \end{diagram}
  \]
where $\coprod\Delta_{E'}(i)$ is a finite sum, $E'\in\mathcal{B}_i$.

It is clear that $H^{(i-1)}=(\frac{K}{I_{\mathcal{B}_{i-1}}(-,E_i)})^{(i-1)}=0$. Now we show $H^{(i)}=0$. Indeed we put
$E_i=\coprod_{t=1}^sE_t'$ as a sum of indescomposable sumands.
Let $\psi:\mathcal{C}(-,\tilde{E})\rightarrow H$
 any morphism, with $\tilde{E}\in\mathcal{B}_i$ an indecomposable object.  Since $\mathcal{C}(-,\tilde{E})$ is projective, there exists a map 
 $p:\mathcal{C}(-,\tilde{E})\rightarrow K$ such that  $\pi p=\psi$. 
 Let $p_i:\coprod_{t=1}^sE_t'\rightarrow E_i'$ be a projection. It follows
 that the composition
 \[
  \mathcal{C}(-,\tilde{E})\xrightarrow{p} K\xrightarrow{u} \mathcal{C}(-,\coprod_{t=1}^sE_t')\xrightarrow{p_i}\mathcal{C}(-,E_i')
 \]
is an isomorphism or it  lies in $\mathrm{rad}\mathcal{C}(\tilde{E},E_i')=I_{\mathcal{B}_{i-1}}(\tilde{E},E_i')$. Assume the above
morphism is an isomorphism, then we get a contradiction with  the fact that $\varphi$ is a projective cover. Therefore, the above morphism
lies in $I_{\mathcal{B}_{i-1}}(\tilde{E},E_i')$, and we get $qup=u'\psi=0$; finally $\psi=0$ because $u'$ is a monomorphism.
\bigskip

\textbf{b}) Assume that $i<j$, then $F^{(i)}\otimes \Delta^{\circ}(j)=0$. Indeed since $i<j$, we have $i\le j-1$ and 
$\mathcal{B}_{i}\subset \mathcal{B}_{j-1}$. Set $\Delta_{E}^\circ(j)=\frac{\mathcal{C}(E,-)}{I_{\mathcal{B}_{j-1}}(E,-)}\in\Delta^\circ(j)$, 
then there exists an  epimorphism 
$\mathcal{C}(-, E_i)\xrightarrow{\varphi} F^{(i)}\rightarrow 0$, with $E_i\in\mathcal{B}_i$,  which implies the epimorphism 
\[
 0=\frac{\mathcal{C}(E,E_i)}{I_{\mathcal{B}_{j-1}}(E,E_i)}=\mathcal{C}(-,E_i)\otimes \Delta_E^{\circ}(j)\rightarrow F^{(i)}\otimes\Delta_E^{\circ}(j) \rightarrow 0,
\]
since $E\in\mathcal{B}_j$.
\bigskip

\textbf{c}) Assume that $j\le i$, then $\mathrm{Tor}_1(G,\Delta(j))=0$. Set 
$\Delta_{E}(j)=\frac{\mathcal{C}(E,-)}{I_{\mathcal{B}_{j-1}}(E,-)}$. Thus, there exist exact sequences 
\begin{eqnarray}
\mathcal{C} (E_{j-1},-)\rightarrow I_{\mathcal{B}_{j-1}}(E,-)\rightarrow 0, \ E_{j-1}\in\mathcal{B}_{j-1},\label{Fdelta01}\\
0\rightarrow I_{\mathcal{B}_{j-1}}(E,-)\rightarrow\mathcal{C}(E,-)\rightarrow \Delta^{\circ}_E(j)\rightarrow 0. \label{Fdelta02}
\end{eqnarray}
On the other hand, the fact   $j\le i$ implies $\mathcal{B}_{j-1}\le \mathcal{B}_{i-1}$ and
 $G^{(j-1)}\subset G^{(i-1)}\subset G{(i)}=0$. Therefore  
 $G\otimes\mathcal{C}(E_{j-1},-)=G(E_{j-1})=(\mathcal{C}(-,E_{j-1}),G)=0$ because $G^{(j-1)}=0$. Thus, the
 exact sequence (\ref{Fdelta01}) implies the following exact sequence:
 $
  0=G\otimes\mathcal{C}(E_{j-1},-)\rightarrow G\otimes I_{\mathcal{B}_{j-1}}(E,-)\rightarrow 0.
 $
Thus, 
\begin{equation}\label{Fdelta03}
 G\otimes I_{\mathcal{B}_{j-1}}(E,-)=0.
\end{equation}

After applying $G\otimes -$ to the exact sequence (\ref{Fdelta02}) and by using (\ref{Fdelta03}), we have the following exact sequence:
\begin{eqnarray*}
 0\rightarrow\mathrm{Tor}_1(G,\Delta_E^\circ(j))\rightarrow G\otimes I_{\mathcal{B}_{j-1}}(E,-)
 \rightarrow G\otimes \mathcal{C}(E,-)\rightarrow G\otimes\Delta_E^\circ(j)\rightarrow 0,
\end{eqnarray*}
which implies $\mathrm{Tor}_1(G,\Delta_E^\circ(j))=0$, by (\ref{Fdelta03}).
 \bigskip
 
 \textbf{d}) $\mathrm{Tor}_1(G,\Delta^{\circ}(j))=0$ for all $j\ge 0$.  First, observe that after applying $-\otimes \Delta^{\circ}(j)$ to the exact 
 sequence (\ref{Fdelta}) and by using the fact $\mathrm{Tor}_1(F,\Delta^\circ(j))=0$, we obtain  by the long homology sequence the following
 exact sequence  
 \begin{eqnarray}\label{Fdelta07}
  \cdots\rightarrow\mathrm{Tor}_2(G,\Delta^{\circ}(j))\rightarrow \mathrm{Tor}_1(F^{(i)},\Delta^{\circ}(j))\rightarrow 0\rightarrow\mathrm{Tor}_1(G,\Delta^{\circ}(j))\rightarrow \\
    F^{(i)}\otimes\Delta^{\circ}(j)\rightarrow F\otimes\Delta^{\circ}(j)\rightarrow G\otimes\Delta^{\circ}(j)\rightarrow 0,\notag
 \end{eqnarray}
 for all $j\ge 0$.
 \bigskip
 
 By part (c), it only remains to prove  $\mathrm{Tor}_1(G,\Delta^{\circ}(j))=0$ when  $i<j$. By part (b), however, we
 have  $G^{(i)}\otimes \Delta^{\circ}(j)=0$, and 
 therefore we get $\mathrm{Tor}_1(G,\Delta^{\circ}(j))=0$ by the sequence (\ref{Fdelta07}). 
 \bigskip
 
 \textbf{e})  $G\in F(\Delta)$ and $\mathrm{Tor}_1(G,\Delta^{\circ})=\mathrm{Tor}_1(F^{(i)},\Delta^{\circ})=0$.  In  part (d), we 
 proved 
 $\mathrm{Tor}_1(G,\Delta^{\circ}(j))=0$, and since $G^{(i)}=0$ it follows that $G\in F(\Delta)$  by induction hypothesis. Thus,  $\mathrm{Tor}_2(G,\Delta^{\circ})=0$ by Lemma \ref{FDelta}. Finally, it follows from 
 (\ref{Fdelta07}) that $\mathrm{Tor}_1(F^{(i)},\Delta^{\circ})=0$.
 \bigskip
 
\textbf{f}) $H\otimes \Delta^\circ (j)=0$ for all $j\ge 0$.  The exact sequence (\ref{Fdelta1}) implies the following long exact sequence:
\begin{eqnarray}\label{FDelta111}
 \hspace{1.2cm} \mathrm{Tor}_1(F^{(i)},\Delta^\circ(j))\rightarrow 
 H\otimes\Delta^{\circ}(j)\rightarrow  \Delta(i)\otimes\coprod \Delta_{E'}^{\circ}(j)\rightarrow F^{(i)}\otimes \Delta^{\circ}(j)\rightarrow 0.
\end{eqnarray}

 First assume that $i\neq j$. Then, $\Delta(i)\otimes \Delta^{\circ}(j)=0$ by Lemma \ref{DeltaNabla}; therefore 
the exact sequence (\ref{FDelta111}) and part (e) imply that $H\otimes \Delta^\circ (j)=0$. 

Assume $j=i$, then there exist exact sequences
\[
 \mathcal{C}(E',-)\rightarrow \Delta_{E'}^\circ (i)\rightarrow 0, E'\in\mathcal{B}_i
\]
which implies $H\otimes\mathcal{C}(E',-)\rightarrow H\otimes\Delta_{E'}^\circ (i)\rightarrow 0$. Since
$H^{(i)}=0$, we have $H\otimes\mathcal{C}(E',-)=H(E')=(\mathcal{C}(-,E'),H)=0$.  It follows that $H\otimes \Delta^\circ (i)=0$.
\bigskip

\textbf{g}) $H\cong 0$. By induction, let $E_1\in\mathcal{B}_1$ and put $\Delta_{E_1}^\circ(1)=\frac{\mathcal{C}(E_1,-)}{I_{\mathcal{B}_0}(E_1,-)}=
\mathcal{C}(E_1,-)$. It follows that $H(E_1)=H\otimes \mathcal{C}(E_1,-)=H\otimes\Delta_{E_1}^{\circ}(1)=0$. Therefore
$H(E_1)=0$ for all $E_1\in\mathcal{B}_1$.

Assume  $H(E_{i-1})=0$ for all $E_{i-1}\in\mathcal{B}_{i-1}$. Let $E_i\in\mathcal{B}_i$, then $H\otimes I_{\mathcal{B}_{i-1}}(E_i,-)=0$. 
Indeed we have an exact sequence
\begin{eqnarray}
  \hspace{1.2cm} \mathcal{C}(E_{i-2},-)\rightarrow \mathcal{C}(E_{i-1},-)\rightarrow I_{\mathcal{B}_{i-1}}(E_i,-)\rightarrow 0, 
 E_{i-2}\in\mathcal{B}_{i-2},  E_{i-1}\in\mathcal{B}_{i-1}.
\end{eqnarray}
which implies 
\[
 0=H(E_{i-1})=H\otimes \mathcal{C}(E_{i-1},-)\rightarrow H\otimes I_{\mathcal{B}_{i-1}}(E_i,-)\rightarrow 0.
\]
As a result  $H\otimes I_{\mathcal{B}_{i-1}}(E_i,-)=0$.

In this way, there exists an exact sequence 
\[
 0\rightarrow I_{\mathcal{B}_{i-1}}(E_1,-)\rightarrow\mathcal{C}(E_1,-)\rightarrow 
\frac{\mathcal{C}(E_i,-)}{I_{\mathcal{B}_{i-1}}(E_i,-)}=\Delta_{E_i}^\circ(i)\rightarrow 0,
\]
which implies
 \[
 \dgARROWLENGTH=.3em
\dgTEXTARROWLENGTH=.5em
  \begin{diagram}
     \node{H\otimes I_{\mathcal{B}_{i-1}}(E_1,-)}\arrow{e,t}{}\arrow{s,l}{\cong}
      \node{H\otimes\mathcal{C}(E_i,-)}\arrow{e,t}{}\arrow{s,l}{}
       \node{H\otimes\Delta_{E_i}^\circ(i)}\arrow{e,t}{}\arrow{s,l}{\cong}
        \node{0}\\   
  \node{0}\arrow{e,t}{}
   \node{H(E_i)}\arrow{e,t}{}
    \node{0}\arrow{e,t}{}
     \node{0}
      \node{}
  \end{diagram}
 \]
by part (f). Therefore $H(E_i)=0$ for all $E_i\in\mathcal{B}_i$.

\textbf{h}) Since $H\cong 0$ and $G\in\mathcal{F}(\Delta)$, the sequences (\ref{Fdelta}) and (\ref{Fdelta1}) imply that $F\in \mathcal{F}(\Delta)$.

\end{proof}

\subsection{Quasi-hereditary categories and their subcategories}
Assume that $\mathcal{C}$ is  a $\mathrm{Hom}$-finite  Krull-Schmidt quasi-hereditary category $K$-category with an exhaustive filtration $\{\mathcal{B}_i\}_{i\ge 0}$:
\[
 0=\mathcal{B}_0\subset\cdots\subset \mathcal{B}_{i-1}\subset\mathcal{B}_{i}\subset\cdots\subset \mathcal{C}.
\]

 According with the definition, each $\mathcal{B}_i$ is a quasi-hereditary category. We denote by 
\[
 \Delta_{\mathcal{B}_i}=\{\hat{\Delta}(j)|0\le j\le i\},
\]
the corresponding sequence of standard subcategories with respect to the
filtration 
\[
 0=\mathcal{B}_0\subset\cdots \subset \mathcal{B}_{i-1}\subset\mathcal{B}_i
\]
where 
\[
 \hat{\Delta}(j)=\{\mathcal{B}_i(-,E)/\hat{I}_{\mathcal{B}_{j-1}}(-,E)|E\in\mathcal{B}_j\},
\]
 and
 \[
  \hat{I}_{\mathcal{B}_{j-1}}(-,E)=\mathrm{Tr}_{\{\mathcal{B}_i(-,E')|E'\in\mathcal{B}_{j-1}\}}\mathcal{B}_i(-,E).
 \]
\bigskip
 
Thus, there is  an associated category
$\mathcal{F}(\Delta_{\mathcal{B}_i})\subset \mathrm{mod}(\mathcal{B}_i)$  consisting of
$\mathcal{B}_i$-modules which have a $\Delta_{\mathcal{B}_i}$-filtration.
\bigskip

Here we study a relationship between 
$\mathcal{F}(\Delta_{\mathcal{B}_i})$ and $\mathcal{F}(\Delta_{\mathcal{C}})$. Remember that there is a pair of functors
\begin{eqnarray}\label{functors1}
|_{\mathcal {B}_i}:\mathrm{mod}(\mathcal C)\rightarrow \mathrm{mod}(\mathcal B_i), \;\mathcal C\otimes_{\mathcal{B}_i}-: \mathrm{mod}(\mathcal B_i)\rightarrow \mathrm{mod}(\mathcal C).
\end{eqnarray}
\bigskip 
	
We denote by  $ \mathrm{mod}(\mathcal C)^{(i)}$ the full subcategory of all  
$F\in\mathcal{F}(\Delta_{\mathcal{C}})$ for which $F=F^{(i)}$
\bigskip

\begin{theorem}
Let $\mathcal{C}$ be a quasi-hereditary category with respect to $\{\mathcal{B}_i\}_{i\ge 0}$. The restrictions of the functors (\ref {functors1}) define 
an equivalence of  $\mathrm{mod}(\mathcal{C})^{(i)}$ and $\mathcal{F}(\Delta_{{\mathcal B}_i})$
\end{theorem}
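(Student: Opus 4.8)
The plan is to reduce the statement to the general equivalence between $\mathrm{Mod}(\mathcal{B}_i)$ and $\mathcal{C}\otimes_{\mathcal{B}_i}\mathrm{Mod}(\mathcal{B}_i)$ recalled after Proposition \ref{cap1.9}. Once we know that the two functors of (\ref{functors1}) carry $\mathrm{mod}(\mathcal{C})^{(i)}$ into $\mathcal{F}(\Delta_{\mathcal{B}_i})$ and conversely, the quasi-inverse statement comes for free: every $F\in\mathrm{mod}(\mathcal{C})^{(i)}$ equals $F^{(i)}$, hence is projectively presented over $\mathcal{B}_i$ by Lemma \ref{traceFunctor}(i), so it lies in $\mathcal{C}\otimes_{\mathcal{B}_i}\mathrm{Mod}(\mathcal{B}_i)$ and $\mathcal{C}\otimes_{\mathcal{B}_i}(F|_{\mathcal{B}_i})\cong F^{(i)}=F$ by Lemma \ref{traceFunctor}(ii), while $(\mathcal{C}\otimes_{\mathcal{B}_i}M)|_{\mathcal{B}_i}\cong M$ by Proposition \ref{cap1.9}(b); naturality of these isomorphisms is part of the general statement. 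So the whole proof amounts to verifying the two inclusions of essential images.

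For the first inclusion I use a restriction computation. Since each $\mathcal{B}_j$, $j\le i$, is a full subcategory of $\mathcal{B}_i$ closed under summands, Lemma \ref{IdempotentIdeal} gives $I_{\mathcal{B}_{j-1}}(-,E)|_{\mathcal{B}_i}=\hat I_{\mathcal{B}_{j-1}}(-,E)$ for $E\in\mathcal{B}_j$ (both are the ideal of morphisms factoring through $\mathcal{B}_{j-1}$); as restriction is exact, this identifies the objects of $\Delta_{\mathcal{C}}(j)$ with those of $\hat\Delta(j)$ after restriction. A Yoneda argument shows that restriction commutes with traces, $(F^{(j)})|_{\mathcal{B}_i}=(F|_{\mathcal{B}_i})^{(j)}$ for $j\le i$. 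Hence for $F\in\mathrm{mod}(\mathcal{C})^{(i)}$ the trace filtration of $F|_{\mathcal{B}_i}$ in $\mathrm{mod}(\mathcal{B}_i)$ is the restriction of that of $F$: its subquotients are restrictions of finite direct sums of objects of $\Delta_{\mathcal{C}}(j)$, hence finite direct sums of objects of $\hat\Delta(j)$, and it reaches $F|_{\mathcal{B}_i}$ at step $i$. Thus $F|_{\mathcal{B}_i}\in\mathcal{F}(\Delta_{\mathcal{B}_i})$.

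The second inclusion is the crux. Take $M\in\mathcal{F}(\Delta_{\mathcal{B}_i})$ with filtration $0=M_0\subset M_1\subset\cdots\subset M_i=M$ and $M_j/M_{j-1}\cong\bigoplus_k\hat\Delta_{E_{jk}}(j)$, and put $N=\mathcal{C}\otimes_{\mathcal{B}_i}M$; applying $\mathcal{C}\otimes_{\mathcal{B}_i}-$ to a finite presentation of $M$ over $\mathcal{B}_i$ shows $N\in\mathrm{mod}(\mathcal{C})$ with $N=N^{(i)}$. Two facts remain: (a) $\mathcal{C}\otimes_{\mathcal{B}_i}(\hat\Delta_E(j))\cong\mathcal{C}(-,E)/I_{\mathcal{B}_{j-1}}(-,E)$ for $E\in\mathcal{B}_j$; and (b) the first left derived functor of $\mathcal{C}\otimes_{\mathcal{B}_i}-$ vanishes on each $\hat\Delta_E(j)$. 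For both I invoke Theorem \ref{TheoremQh}(ii) in $\mathcal{C}$ to obtain an exact sequence $\mathcal{C}(-,E'')\to\mathcal{C}(-,E')\to\mathcal{C}(-,E)\to\mathcal{C}(-,E)/I_{\mathcal{B}_{j-1}}(-,E)\to0$ with $E'\in\mathcal{B}_{j-1}$, $E''\in\mathcal{B}_{j-2}$; restricting it to $\mathcal{B}_i$ gives (by the second paragraph) a partial projective resolution of $\hat\Delta_E(j)$ over $\mathcal{B}_i$, and applying $\mathcal{C}\otimes_{\mathcal{B}_i}-$ to that resolution returns the original $\mathcal{C}$-sequence, which is exact — this yields (a) from right-exactness and (b) because the first homology in question is precisely the exactness at $\mathcal{C}(-,E')$. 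As $\mathcal{C}\otimes_{\mathcal{B}_i}-$ preserves direct sums, (b) extends to each $M_j/M_{j-1}$, so the long exact sequence and induction on $j$ show that $\mathcal{C}\otimes_{\mathcal{B}_i}M_{j-1}\to\mathcal{C}\otimes_{\mathcal{B}_i}M_j$ is a monomorphism. We thus obtain a filtration $0=\mathcal{C}\otimes_{\mathcal{B}_i}M_0\subset\cdots\subset\mathcal{C}\otimes_{\mathcal{B}_i}M_i=N$ with subquotients $\bigoplus_k\mathcal{C}(-,E_{jk})/I_{\mathcal{B}_{j-1}}(-,E_{jk})$, finite direct sums of objects of $\Delta_{\mathcal{C}}(j)$. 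A final Yoneda argument — using $(\mathcal{C}\otimes_{\mathcal{B}_i}M_j)|_{\mathcal{B}_i}\cong M_j$ and $M(E)=M_j(E)$ for $E\in\mathcal{B}_j$ — identifies $\mathcal{C}\otimes_{\mathcal{B}_i}M_j$ with $N^{(j)}$, so this is the trace filtration of $N$ and $N\in\mathcal{F}(\Delta_{\mathcal{C}})$; since $N=N^{(i)}$ this gives $N\in\mathrm{mod}(\mathcal{C})^{(i)}$. The main obstacle is (b) — keeping $\mathcal{C}\otimes_{\mathcal{B}_i}-$ exact along the $\Delta_{\mathcal{B}_i}$-filtration — together with the bookkeeping needed to recognize the resulting filtration of $N$ as its trace filtration; all the rest is the change-of-category formalism of Section 1.3 combined with Lemma \ref{traceFunctor} and Theorem \ref{TheoremQh}.
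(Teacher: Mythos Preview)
Your proof is correct and follows the same overall strategy as the paper: show that each of the two functors carries one subcategory into the other, and then invoke the general equivalence between $\mathrm{Mod}(\mathcal{B}_i)$ and the projectively-$\mathcal{B}_i$-presented $\mathcal{C}$-modules (Proposition \ref{cap1.9}) to conclude. The restriction half is handled identically in spirit: you use a direct Yoneda argument to see that restriction commutes with traces and sends $\Delta_E(j)$ to $\hat\Delta_E(j)$, while the paper reaches the same conclusions via the presentations provided by Lemma \ref{traceFunctor}.

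The only genuine technical difference is in the tensor half. To show that $\mathcal{C}\otimes_{\mathcal{B}_i}-$ preserves the $\Delta$-filtration, the paper \emph{first} proves the identification $(\mathcal{C}\otimes_{\mathcal{B}_i}G)^{(j)}\cong \mathcal{C}\otimes_{\mathcal{B}_i}G^{(j)}$ by manipulating presentations (its part a)), and then in part c) reads off the injectivity of $\mathcal{C}\otimes_{\mathcal{B}_i}G^{(j-1)}\to\mathcal{C}\otimes_{\mathcal{B}_i}G^{(j)}$ from the exactness of the trace filtration of $\mathcal{C}\otimes_{\mathcal{B}_i}G$. You reverse the order: you prove the Tor-type statement $L_1(\mathcal{C}\otimes_{\mathcal{B}_i}-)(\hat\Delta_E(j))=0$ directly from the exact sequence supplied by Theorem \ref{TheoremQh}(ii), deduce injectivity along the filtration from the long exact sequence, and only afterwards identify the resulting filtration with the trace filtration of $N$. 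Both routes are valid; your derived-functor formulation isolates the key homological fact more cleanly, while the paper's version stays entirely within the language of presentations and avoids introducing $L_1$.
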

\begin{proof}
\textbf{1}) Let $G\in\mathcal{F}(\Delta_{{\mathcal B}_i})$ with filtration
\[
G^{(0)}\subset G^{(1)}\subset\cdots \subset G^{(i-1)}\subset G^{(i)}=G.
\]
We will show that   $\mathcal{C}\otimes_{\mathcal{B}_i}G$ lies in  $ \mathrm{mod}(\mathcal C)^{(i)}$.
\bigskip

Consider the following  exact sequence 
\begin{eqnarray}\label{ext0}
\mathcal{B}_i(-,E\rq{})\rightarrow\mathcal{B}_i(-,E)\rightarrow G\rightarrow 0,
\end{eqnarray}
with $\ E\rq{},E\in\mathcal{B}_i$. Thus,   after applying $\mathcal{C}\otimes_{\mathcal{B}_i}$  to (\ref{ext0}), we obtain
\begin{eqnarray}\label{ext4}
\mathcal{C}(-E\rq{})\rightarrow\mathcal{C}(-,E)\rightarrow \mathcal{C}\otimes_{\mathcal{B}_i}G\rightarrow 0. 
\end{eqnarray}
\bigskip

\textbf{a)} First we prove that 
 \[
(\mathcal{C}\otimes_{\mathcal{B}_i} G)^{(j)}=\begin{cases} \mathcal{C}\otimes_{\mathcal{B}_i} G^{(j)}& \text{if } j< i;\\
                                                                                \mathcal{C}\otimes_{\mathcal{B}_i} G& \text{if } j\ge i.
                                                                                            
 \end{cases}
 \]

a.1) Assume that $j< i$.  Given  $E\in B_i$, there is a pair of functors 
\begin{eqnarray*}
 I_{\mathcal B_j}(-,E)\subset \mathcal{C}(-,E):\mathrm{mod}(\mathcal C)\rightarrow \mathbf{Ab},\\
 \hat{I}_{\mathcal B_j}(-,E)\subset \mathcal{B}_i(-,E):\mathrm{mod}(\mathcal B_i)\rightarrow \mathbf{Ab}.
\end{eqnarray*}

 Clearly, we have 
 \[
 I_{\mathcal B_j}(-,E)|_{\mathcal{B}_i}=\hat{I}_{\mathcal B_j}(-,E).
\]

Now consider the exact sequence   
\[
\mathcal{C}(-,E_{j-1})\rightarrow\mathcal{C}(-,E_{j})\rightarrow I_{\mathcal B_j}(-,E)\rightarrow 0,
\]
with $ E_j\in \mathcal{B}_j,E_{j-1}\in\mathcal{B}_{j-1}$.   Since $j< i$, we have $E_{j},E_{j-1}\in B_i$.  In this way,  $I_{\mathcal{B}_j}(-,E)$ is projectively  presented over $B_i$; thus we have
\begin{eqnarray}\label{ext2}
 I_{\mathcal B_j}(-,E)&=&\mathcal{C}\otimes_{\mathcal{B}_i}( I_{\mathcal B_j}(-,E)|_{\mathcal{B}_i})\\
                               &=&\mathcal{C}\otimes_{\mathcal{B}_i}\hat{I}_{\mathcal B_j}(-,E).\notag
\end{eqnarray}

 After applying  $|_{\mathcal{B}_j}$ followed by $\mathcal{B}_i\otimes_{\mathcal{B}_j}$ to the exact sequence (\ref{ext0}),  we obtain 
 by  Lemma \ref{traceFunctor} the following exact sequence:
\begin{eqnarray}\label{ext1}
 \hat{I}_{\mathcal B_j}(-,E\rq{})\rightarrow \hat{I}_{\mathcal B_j}(-,E)\rightarrow G^{(j)}\rightarrow 0.
\end{eqnarray} 

After applying $\mathcal{C}\otimes_{\mathcal{B}_i}$ to the exact sequence (\ref{ext1}) and by using  (\ref{ext2})  we have the following exact sequence
\begin{eqnarray}\label{ext3}
 {I}_{\mathcal B_j}(-,E)\rightarrow {I}_{\mathcal B_j}(-,E)\rightarrow \mathcal{C}\otimes_{\mathcal{B}_i}G^{(j)}\rightarrow 0
\end{eqnarray}

After   applying $|_{\mathcal{B}_j}$ followed by $\mathcal{C}\otimes_{\mathcal{B}_j}$ to the exact sequence (\ref{ext4}) and by using 
Lemma \ref{traceFunctor},  we obtain the following exact sequence
\begin{eqnarray}\label{ext5}
 {I}_{\mathcal B_j}(-,E\rq{})\rightarrow {I}_{\mathcal B_j}(-,E)\rightarrow( \mathcal{C}\otimes_{\mathcal{B}_i}G)^{(j)}\rightarrow 0.
\end{eqnarray}
Therefore, we obtain
\[
\mathcal{C}\otimes_{\mathcal{B}_i}G^{(j)}\cong ( \mathcal{C}\otimes_{\mathcal{B}_i}G)^{(j)}
\]
by  (\ref{ext5}) and (\ref{ext3}).

a.2)  Assume $j\ge i$. Thus, after applying  $|_{\mathcal{B}_j}$ followed by $\mathcal{C}\otimes_{\mathcal{B}_j}$ to the exact sequence  (\ref{ext4}), we have
\begin{eqnarray}\label{ext6}
I_{\mathcal{B}_j}(-E\rq{})\rightarrow I_{\mathcal{B}_j}(-,E)\rightarrow( \mathcal{C}\otimes_{\mathcal{B}_i}G)^{(j)}\rightarrow 0.
\end{eqnarray}
Since $E\rq{},E\in \mathcal{B}_i\subset\mathcal{B}_j$, we conclude  that $I_{\mathcal{B}_j}(-E\rq{})=\mathcal{C}(-,E\rq{})$ and  $I_{\mathcal{B}_j}(-E)=\mathcal{C}(-,E)$. Thus (\ref{ext6}) can be written as 
 \[
\mathcal{C}(-E\rq{})\rightarrow \mathcal{C}(-,E)\rightarrow( \mathcal{C}\otimes_{\mathcal{B}_i}G)^{(j)}\rightarrow 0.
 \] 
 This implies that  $( \mathcal{C}\otimes_{\mathcal{B}_i}G)^{(j)}=\mathcal{C}\otimes_{\mathcal{B}_i}G$ by (\ref{ext4}).
\bigskip

\textbf{ b)} Let $E\in \mathcal{B}_j$, and assume that $\hat{\Delta}_E(j)=\mathcal{B}_i(-,E)/\hat{I}_{{\mathcal B}_{j-1}}(-,E)$ is in $\hat{\Delta}(j)$. 
 Then $\mathcal{C}\otimes \hat{\Delta}_E(j)$ lies in $\Delta(j)$. Indeed, consider the exact sequence
 \[
 0\rightarrow \hat{I}_{\mathcal{B}_{j-1}}(-,E)\rightarrow\mathcal{B}_{i}(-,E)\rightarrow \hat{\Delta}_E(j)\rightarrow 0.
 \]
Then,  after applying $\mathcal{C}\otimes_{\mathcal{B}_i}$ and using (\ref{ext2}), we have a commutative diagram with exact arrows:
\[
\begin{diagram}\dgARROWLENGTH=1em
\node{}\arrow{e,!}
 \node{\mathcal{C}\otimes_{\mathcal{B}_i}\hat{I}_{\mathcal{B}_{j-1}}(-,E)}\arrow{e,t}{}\arrow{s,l}{\cong}
  \node{\mathcal{C}\otimes_{\mathcal{B}_i}\mathcal{B}_i(-,E)} \arrow{e,t}{}\arrow{s,l}{\cong}
   \node{\mathcal{C}\otimes_{\mathcal{B}_i}\hat{\Delta}_E(j)}\arrow{e,t}{}\arrow{s,l}{}
 \node{0}\\
\node{0}\arrow{e,t}{}
 \node{{I}_{\mathcal{B}_{j-1}}(-,E)}\arrow{e,t}{}
  \node{\mathcal{B}(-,E)} \arrow{e,t}{}
   \node{{\Delta}_E(j)}\arrow{e,t}{}
 \node{0}
\end{diagram} 
\]
Therefore  $\mathcal{C}\otimes \hat{\Delta}_E(j)\cong  \Delta_E(j)\in\Delta(j)$.
\bigskip

\textbf{c)} We have a $\Delta_\mathcal{C}$-filtration 
\[
(\mathcal{C}\otimes G)^{(0)}\subset (\mathcal{C}\otimes G)^{(1)}\subset (\mathcal{C}\otimes G)^{(2)}\subset \cdots
\]
such that $(\mathcal{C}\otimes G)^{(j)}=\mathcal{C}\otimes G$ for $j\ge i$. Indeed  after applying  $\mathcal{C}\otimes_{\mathcal{B}_i}$
to the exact sequence 
\[
0\rightarrow G^{(j-1)}\rightarrow G^{(j)}\rightarrow\frac{G^{(j)}}{G^{(j-1)}}\rightarrow 0,
\]
we obtain the following  commutative diagram with exact arrows:
\begin{eqnarray}\label{Delta6}
\begin{diagram}\dgARROWLENGTH=2em
\node{}\arrow{e,!}{}
 \node{\mathcal{C}\otimes_{\mathcal{B}_i}G^{(j-1)}}\arrow{e,t}{}\arrow{s,l}{\cong}
  \node{\mathcal{C}\otimes_{\mathcal{B}_i}G^{(j)}}\arrow{e,t}{}\arrow{s,l}{\cong}
   \node{\mathcal{C}\otimes_{\mathcal{B}_i}\frac{G^{(j)}}{Y^{(j-1)}}}\arrow{e,t}{}\arrow{s,l}{\cong}
    \arrow{e,t}{}
     \node{0}\\
\node{0}\arrow{e,t}{}
 \node{(\mathcal{C}\otimes_{\mathcal{B}_i}G)^{(j-1)}}\arrow{e,t}{}
  \node{(\mathcal{C}\otimes_{\mathcal{B}_i}G)^{(j)}}\arrow{e,t}{}
   \node{\frac{(\mathcal{C}\otimes_{\mathcal{B}_i}G)^{(j)}}{(\mathcal{C}\otimes_{\mathcal{B}_i}G)^{(j-1)}}}\arrow{e,t}{}
    \arrow{e,t}{}
     \node{0}
\end{diagram}
\end{eqnarray}

Thus, the  map $\mathcal{C}\otimes_{\mathcal B_i}G^{(j-1)}\rightarrow \mathcal{C}\otimes_{\mathcal B_i}G^{(j)}$ is  a monomorphism. Morever, we have
\[
\frac{\mathcal{C}\otimes_{\mathcal{B}_i}G^{(j)}}{\mathcal{C}\otimes_{\mathcal{B}_i}G^{(j-1)}}\cong \frac{(\mathcal{C}\otimes_{\mathcal{B}_i}G)^{(j)}}{(\mathcal{C}\otimes_{\mathcal{B}_i}G)^{(j-1)}}\cong \mathcal{C}\otimes_{\mathcal{B}_i}(\frac{G^{(j)}}{G^{(j-1)}}).
\]
Since $ \frac{G^{(j)}}{G^{(j-1)}}$ is a direct sum of objects from the category $\hat{\Delta}(j)$, we conclude that 
$\mathcal{C}\otimes_{\mathcal{B}_i}\frac{G^{(j)}}{G^{(j-1)}}$  is a direct sum of objects from the category ${\Delta}(j)$ by the part (b).
\bigskip

\textbf{2}) Let $F\in \mathrm{mod}(\mathcal{C})^{(i)}$, and assume that $F$ has a trace filtration
\[
 0=F^{(0)}\subset F^{(1)}\subset\cdots \subset F^{(i-1)}\subset F^{(i)}=F
\]

Since the functor restriction  $|_{\mathcal {B}_i}$ is exact, we have a filtration for  $F|_{\mathcal {B}_i}$:

\begin{eqnarray}\label{traceRest}
 0=F^{(0)}|_{\mathcal {B}_i}\subset F^{(1)}|_{\mathcal {B}_i}\subset\cdots
 \subset F^{(i-1)}|_{\mathcal {B}_i}\subset F^{(i)}|_{\mathcal {B}_i}=F|_{\mathcal {B}_i}.
\end{eqnarray}
We will prove that (\ref{traceRest}) is a $\Delta_{\mathcal{B}_i}$-filtration for $F|_{\mathcal {B}_i}$.
\bigskip

\textbf{a}) Assume that $j\le i$. First we prove that $( F|_{\mathcal{B}_i})^{(j)}\cong F^{(j)}|_{\mathcal{B}_i}$.
\bigskip

By Lemma \ref{traceFunctor} the functor $F^{(i)}$, and therefore $F$  has a presentation
\begin{eqnarray}\label{***}
  \mathcal{C}(-,E_{i-1})\rightarrow\mathcal{C}(-,E_i)\rightarrow F\rightarrow 0, 
\end{eqnarray}
$E_{i-1}\in\mathcal{B}_{i-1}, \ E_i\in\mathcal{B}_{i}$

After applying $|_{\mathcal{B}_i}$ to the exact sequence (\ref{***}), we obtain the following exact sequence:
\begin{eqnarray}\label{****}
 \mathcal{B}_i(-,E_{i-1})\rightarrow\mathcal{B}_i(-,E_i)\rightarrow F|_{\mathcal{B}_i}\rightarrow 0,
\end{eqnarray}
with $ E_{j-1}\in\mathcal{B}_{j-1},E_j\in\mathcal{B}_{j}$.
\bigskip

After applying $\mathcal{C}\otimes_{\mathcal{B}_j}(-|_{\mathcal{B}_j})$ and 
$\mathcal{B}_i\otimes_{\mathcal{B}_j}(-|_{\mathcal{B}_j})$ to the exact sequences (\ref{***}) and
(\ref{****}) respectively, we obtain the following exact sequences:
\begin{eqnarray}
  {I}_{\mathcal B_j}(-,E_{i-1})\rightarrow {I}_{\mathcal B_j}(-,E_i)\rightarrow  F^{(j)}\rightarrow 0, \label{Deltarest}\\
  \hat{I}_{\mathcal B_j}(-,E_{i-1})\rightarrow \hat{I}_{\mathcal B_j}(-,E_i)\rightarrow ( F|_{\mathcal{B}_i})^{(j)}\rightarrow 0\label{tracerest1},
\end{eqnarray}
by Lemma \ref{traceFunctor}.

After applying  $|_{\mathcal{B}_i}$  to the exact sequence (\ref{Deltarest}), we obtain the exact sequence
\begin{eqnarray}\label{tracerest2}
  \hat{I}_{\mathcal B_j}(-,E_{i-1})\rightarrow \hat{I}_{\mathcal B_j}(-,E_i)\rightarrow F^{(j)}|_{\mathcal{B}_i}\rightarrow 0.
\end{eqnarray}
It follows from (\ref{tracerest1}) and (\ref{tracerest2}) that  $( F|_{\mathcal{B}_i})^{(j)}\cong F^{(j)}|_{\mathcal{B}_i}$.
\bigskip

\textbf{b})  $(F|_{\mathcal{B}_i})^{(j)}/(F|_{\mathcal{B}_i})^{(j-1)}$ is a finite sum of
 objects from $\hat{\Delta}(j)$.

Let $\Delta_{E}(j)=\mathcal{C}(-,E)/I_{\mathcal{B}_{j-1}}(-,E)$ with $E\in\mathcal{B}_j$. Thus, after applying
$|_{\mathcal{B}_i}$ to the exact sequence 
$0\rightarrow I_{\mathcal{B}_{j-1}}(-,E)\rightarrow\mathcal{C}(-,E)\rightarrow \Delta_{E}(j)\rightarrow 0$, we obtain  the
following commutative diagram: 
\[
\begin{diagram}\dgARROWLENGTH=1em
\node{0}\arrow{e,t}{}
 \node{{I}_{\mathcal{B}_{j-1}}(-,E)|_{\mathcal{B}_i}}\arrow{e,t}{}\arrow{s,l}{\cong}
  \node{\mathcal{C}(-,E)|_{\mathcal{B}_i}} \arrow{e,t}{}\arrow{s,l}{\cong}
   \node{\Delta_{E}(j)|_{\mathcal{B}_i}}\arrow{e,t}{}\arrow{s,l}{}
 \node{0}\\
\node{0}\arrow{e,t}{}
 \node{\hat{I}_{\mathcal{B}_{j-1}}(-,E)}\arrow{e,t}{}
  \node{\mathcal{B}_i(-,E)} \arrow{e,t}{}
   \node{\hat{\Delta}_{E}(j)}\arrow{e,t}{}
 \node{0}
\end{diagram} 
\]
Thus, $\Delta_{E}(j)|_{\mathcal{B}_i}\cong\hat{\Delta}_{E}(j)$.

Since $F^{(j)}/F^{(j-1)}$ is a finite sum of objects from $\Delta(j)$, it follows that  the quotient
\[
 \frac{ F^{(j)}|_{\mathcal{B}_i}}{ F^{(j-1)}|_{\mathcal{B}_i}}\cong\frac{F^{(j)}}{F^{(j-1)}}|_{\mathcal{B}_i}
\]
is a finite sum of objects from $\hat{\Delta}(j)$.

\end{proof}

\section{$\mathcal{F}(\Delta)$  is functorially finite }

In order to  to have another analogous result closer to the theory of  finite-dimensional quasi-hereditary algebras, we  add more restrictions to our categories; in  particular, we need the existence of duality. We will assume in this section that  $\mathcal{C} $ is a dualizing Krull-Schmidt   $K$-variety.  In this way, finitely presented functors have projective covers (see Theorem 2 in \cite{MVO1}), and the category of finitely presented functors $\mathrm{mod}(\mathcal{C})$  has enough projective and injective objects. In the rest of this work, all the $\mathcal{C}$-modules we are considering are finitely presented.
\bigskip

The main result of this section is to prove that  $\mathcal{F}(\Delta)$ is functorially finite in $\mathrm{mod}(\mathcal{C})$  if we add the additional condition that $\mathcal{C}$ is Noetherian \cite{Rin2}. We begin the subsection recalling some definitions from \cite{AB, AuRe}.
\bigskip

Let $\mathcal{X}$ be  a full subcategory of $\mathrm{mod}(\mathcal{C})$; a morphism $f : X\rightarrow M$ in $\mathrm{mod}(\mathcal{C})$, with $X$ in
 $\mathcal{X}$, is a right $\mathcal{X}$-approximation of $M$ if $(-,X)_{\mathcal{X}}\xrightarrow{ (-,f)_{\mathcal{X}}} (-,M)_\mathcal{X} $ is an exact sequence.
 Dually, let $\mathcal{Y}$  be a subcategory of $\mathrm{mod}(\mathcal{C})$, a morphism $g: M \rightarrow Y$ is a left $\mathcal{Y}$-approximation of $M$ if $(Y,-)_{\mathcal{Y}}\xrightarrow{(g,-)_{\mathcal{Y}}}(M,-)_{\mathcal{Y}}\rightarrow 0$  is exact.

 A subcategory $X$  of $\mathrm{mod}(\mathcal{C})$ is  contravariantly (covariantly) finite in $\mathcal{C}$ if every object 
$ M$  in $\mathrm{mod}(\mathcal{C})$  has a right (left) $\mathcal{X}$-approximation.

A subcategory $X$  of $\mathrm{mod}(\mathcal{C})$   is  resolving (coresolving) if it satisfies the following three conditions: (a) it is closed under extensions; (b) it is closed under kernels (cokerneles)  of epimorphisms (monomorphisms) 
and (c) it contains the projective (injective) objects.
\bigskip

A full subcategory $\mathcal{X}$ in $\mathrm{mod}(\mathcal{C})$ is said to be functorially finite in $\mathrm{mod}(\mathcal{C})$  provided every $\mathcal{C}$-module has both  a right $\mathcal{X}$-approximation and  a left $\mathcal{X}$-approximation.
Assume that $\mathcal{C}$ is  quasi-hereditary  with respect to a finite filtration $0=\mathcal{B}_0\subset\cdots\subset \mathcal{B}_n=\mathcal{C}$. We follow closely the arguments given in  \cite{Rin2},  so we obtain the following

\begin{theorem}[\cite{Rin2}, Theorem 1]\label{RingT} Let $\mathcal{C}$ be    a dualizing Krull-Schmidt Noetherian  $K$-category. Assume $\mathcal{C}$ is quasi-hereditary with respect to a finite filtration  $0=\mathcal{B}_0\subset\cdots\subset \mathcal{B}_n=\mathcal{C}$, then  $\mathcal{F}(\Delta)$ is functorially finite in $\mathrm{mod}(\mathcal{C})$.
\end{theorem}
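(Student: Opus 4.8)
The plan is to adapt Ringel's original argument (\cite{Rin2}, Theorem 1) to the functor-category setting, using the machinery developed in the previous sections. The key structural facts available to us are: $\mathcal{F}(\Delta)$ is closed under kernels of epimorphisms and under extensions (Theorem \ref{FDelta}); the vanishing $\mathrm{Ext}^t(\mathcal{F}(\Delta),\nabla)=0$ for all $t\ge 1$ (Theorem \ref{FDelta}); and, since $\mathcal{C}$ is Noetherian and dualizing, $\mathrm{mod}(\mathcal{C})$ has projective covers, enough injectives, and $F^{(i)}\in\mathrm{mod}(\mathcal{C})$ whenever $F\in\mathrm{mod}(\mathcal{C})$. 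Dually one has the costandard category $\mathcal{F}(\nabla)$, obtained by applying the duality $D$, which is closed under cokernels of monomorphisms and extensions, with $\mathrm{Ext}^t(\Delta,\mathcal{F}(\nabla))=0$.

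First I would prove that $\mathcal{F}(\Delta)$ is contravariantly finite in $\mathrm{mod}(\mathcal{C})$. The idea is the standard one: for $M\in\mathrm{mod}(\mathcal{C})$ one constructs a right $\mathcal{F}(\Delta)$-approximation by an inductive "killing extensions" procedure along the finite filtration. Concretely, since the filtration $0=\mathcal{B}_0\subset\cdots\subset\mathcal{B}_n=\mathcal{C}$ is finite, one argues by downward induction on the filtration level. At each step one uses that $\mathrm{Ext}^1(\Delta(j),-)$ applied to the relevant syzygies is controlled, together with the fact that $\mathcal{C}$ being dualizing and Noetherian keeps everything inside $\mathrm{mod}(\mathcal{C})$; the finiteness of the filtration guarantees the process terminates. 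The universal extension
\[
0\to Y\to E\to M\to 0
\]
with $Y\in\mathcal{F}(\nabla)$ and $\mathrm{Ext}^1(\Delta,Y)=0$ produces $E$ whose projective cover sits in $\mathcal{F}(\Delta)$; the composite $E\to M$ is then the desired approximation after finitely many iterations. Here one invokes that $\mathrm{Ext}^1(F,\nabla)=0$ for $F\in\mathcal{F}(\Delta)$ (Theorem \ref{FDelta}) to see that morphisms from $\mathcal{F}(\Delta)$ lift.

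Next I would obtain covariant finiteness. The cleanest route is to apply the duality $D:\mathrm{mod}(\mathcal{C})\to\mathrm{mod}(\mathcal{C}^{\mathrm{op}})$, which sends $\mathcal{F}(\Delta_{\mathcal{C}})$ to $\mathcal{F}(\nabla_{\mathcal{C}^{\mathrm{op}}})$ and, since $\mathcal{C}^{\mathrm{op}}$ is again a dualizing Krull-Schmidt Noetherian quasi-hereditary $K$-category with respect to the same finite filtration, reduces a left $\mathcal{F}(\Delta_{\mathcal{C}})$-approximation of $M$ to a right $\mathcal{F}(\Delta_{\mathcal{C}^{\mathrm{op}}})$-approximation of $DM$ — which exists by the first part applied to $\mathcal{C}^{\mathrm{op}}$. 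Combining the two halves gives that $\mathcal{F}(\Delta)$ is functorially finite in $\mathrm{mod}(\mathcal{C})$.

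The main obstacle I anticipate is making the inductive approximation construction genuinely work "with several objects": one must check that the universal extensions and syzygies stay within $\mathrm{mod}(\mathcal{C})$ (this is where Noetherianness is essential, via the fact that $F^{(i)}$ is finitely presented whenever $F$ is), that the relevant $\mathrm{Ext}$-groups are finitely generated over $R$ so that the universal extension is built from a \emph{finite} coproduct of copies of $\Delta(j)$'s (this uses $\mathrm{Hom}$-finiteness/dualizing), and that the process terminates after exactly $n$ steps because the filtration has length $n$. Once these finiteness points are secured, the homological bookkeeping is formally identical to Ringel's.
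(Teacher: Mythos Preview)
Your overall strategy matches the paper's: adapt Ringel's argument via iterated universal extensions along the finite filtration, check the finiteness issues using Noetherianness and Hom-finiteness (the paper does this in Lemma~\ref{lemmaF}, showing $\mathrm{Ext}^1(\Delta_E(t),N)$ is finitely generated and nonzero for only finitely many $E$), and then use duality for covariant finiteness. Your list of anticipated obstacles is exactly right and corresponds to what the paper addresses.

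However, the construction in your middle paragraph is garbled, and as written it is not a proof. You describe a sequence $0\to Y\to E\to M\to 0$ with $Y\in\mathcal{F}(\nabla)$ and then say ``$E$ whose projective cover sits in $\mathcal{F}(\Delta)$''; but every projective cover lies in $\mathcal{F}(\Delta)$, so this says nothing, and you never explain why $E$ itself is in $\mathcal{F}(\Delta)$, which is what Wakamatsu's converse needs. The paper (following Ringel) runs the universal-extension argument in the \emph{other} direction: for each $N$ one builds, by induction on $t$, short exact sequences $0\to N\to N'\to Q\to 0$ with $Q$ a finite sum from $\Delta(t)$ and $\mathrm{Ext}^1(\Delta(j),N')=0$ for $j\le t$ (Lemma~\ref{Contravariant}); iterating to $t=n$ gives $0\to N\to Y\to X\to 0$ with $Y\in\mathcal{Y}(\Delta)$ and $X\in\mathcal{F}(\Delta)$. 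Only then does one produce the right approximation of a given $M$: cover the $\mathcal{F}(\Delta)$-trace of $M$ by some $X\in\mathcal{F}(\Delta)$, apply the previous sentence to the kernel $K$ to get $0\to K\to Y^K\to X^K\to 0$, push out to obtain $0\to Y^K\to Z\to M'\to 0$ with $Z\in\mathcal{F}(\Delta)$, and invoke the Wakamatsu converse (Lemma~\ref{waka1}) so that $Z\to M'\hookrightarrow M$ is the approximation (Lemma~\ref{aprox}). Your sketch collapses these two stages into one and loses the mechanism that forces the middle term into $\mathcal{F}(\Delta)$.
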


We start with an arbitrary subcategory $\mathcal{X}$ of $\mathrm{mod}(\mathcal{C})$, and we denote by $\mathcal{Y}$
 the full subcategory of $\mathrm{mod}(\mathcal{C})$ of all modules $Y$ satisfying $\mathrm{Ext}^1_{\mathcal{C}}(X,Y)=0$.
 
 \begin{lemma}\label{waka1}
  Let $0\rightarrow Y\rightarrow X\xrightarrow{\gamma} M\rightarrow 0$ be exact, with $X\in\mathcal{X}$ and 
  $Y\in\mathcal{Y}$. Then $\gamma$ is a right $\mathcal{X}$-approximation of $M$.
 \end{lemma}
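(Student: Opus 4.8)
The plan is to verify directly that the map $(-,X)_{\mathcal{X}} \xrightarrow{(-,\gamma)_{\mathcal{X}}} (-,M)_{\mathcal{X}}$ is an epimorphism of functors on $\mathcal{X}$, which is exactly the defining property of a right $\mathcal{X}$-approximation. So I would fix an arbitrary object $X' \in \mathcal{X}$ and an arbitrary morphism $h \colon X' \to M$, and I must produce a morphism $\tilde{h} \colon X' \to X$ with $\gamma \tilde{h} = h$.

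To do this, first I would apply the contravariant functor $\mathrm{Hom}_{\mathcal{C}}(X', -)$ to the given short exact sequence
\[
0 \to Y \to X \xrightarrow{\gamma} M \to 0,
\]
obtaining the long exact sequence
\[
\mathrm{Hom}_{\mathcal{C}}(X', X) \xrightarrow{(X',\gamma)} \mathrm{Hom}_{\mathcal{C}}(X', M) \to \mathrm{Ext}^1_{\mathcal{C}}(X', Y) \to \cdots.
\]
Now comes the key point: since $X' \in \mathcal{X}$ and $Y \in \mathcal{Y}$, the defining property of $\mathcal{Y}$ (namely $\mathrm{Ext}^1_{\mathcal{C}}(X,Y) = 0$ for all $X \in \mathcal{X}$, which I read as holding for each object of $\mathcal{X}$) gives $\mathrm{Ext}^1_{\mathcal{C}}(X', Y) = 0$. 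Hence $(X',\gamma) \colon \mathrm{Hom}_{\mathcal{C}}(X', X) \to \mathrm{Hom}_{\mathcal{C}}(X', M)$ is surjective, so the desired lift $\tilde{h}$ exists. Since $X'$ and $h$ were arbitrary, the sequence $(-,X)_{\mathcal{X}} \to (-,M)_{\mathcal{X}} \to 0$ is exact, i.e.\ $\gamma$ is a right $\mathcal{X}$-approximation of $M$.

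This argument is essentially immediate once one unwinds the definitions, so I do not anticipate a genuine obstacle; the only thing to be careful about is the interpretation of the hypothesis $Y \in \mathcal{Y}$ — one must read "$\mathrm{Ext}^1_{\mathcal{C}}(X,Y)=0$" in the definition of $\mathcal{Y}$ as quantified over all $X \in \mathcal{X}$, so that it may be applied to the particular object $X'$ appearing in an arbitrary test morphism $h \colon X' \to M$. With that reading the proof is a one-line consequence of the $\mathrm{Ext}$ long exact sequence, and no finiteness, Krull–Schmidt, or quasi-hereditary hypotheses are needed here — they enter only in the later lemmas that produce such short exact sequences in the first place.
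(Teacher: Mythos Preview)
Your argument is correct and is precisely the standard proof: apply $\mathrm{Hom}(X',-)$ to the short exact sequence, use $\mathrm{Ext}^1_{\mathcal{C}}(X',Y)=0$ from the definition of $\mathcal{Y}$, and conclude surjectivity of $(X',\gamma)$. One small slip: $\mathrm{Hom}_{\mathcal{C}}(X',-)$ is covariant, not contravariant, but this does not affect the argument.

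The paper itself does not give a proof; it simply remarks that this is the converse of Wakamatsu's lemma and cites Ringel. Your direct verification via the long exact sequence is exactly what that citation unwinds to, so there is no substantive difference in approach---you have just made explicit what the paper leaves to the reference.
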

 \begin{proof}
  This is the converse of Wakamatsu's lemma, and it is proved in [Ring].
 \end{proof}

Now let $\mathcal{X}=\mathcal{F}(\Delta)$. Then $\mathcal{Y}=\mathcal{Y}(\Delta)$ may be characterized alternatively as the full subcategory of $\mathcal{C}$-modules $\mathcal{Y}$ satisfiying $\mathrm{Ext}^1_{\mathcal{C}}(\Delta(i),Y)=0$  for $1\le i\le n$.
\bigskip

In the  rest of this section  $\mathcal{C}$, will be   a dualizing Krull-Schmidt Noetherian  $K$ quasi-hereditary category with respect to a finite filtration $(\mathcal{B})$.

\begin{lemma}\label{aprox} 
 Assume that $\mathcal{X}$ is closed under extensions and that for every $\mathcal{C}$-module $N$ there exists an exact sequence
 $0\rightarrow N\rightarrow  Y^N\rightarrow X^N\rightarrow  0$ with $X^N\in \mathcal{X}$ and $Y^N\in\mathcal{Y}$. Then every $\mathcal{C}$-module
 $M$ has a right $\mathcal{X}$-approximation.
\end{lemma}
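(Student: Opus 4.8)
The plan is to build the required right $\mathcal{X}$-approximation of an arbitrary $\mathcal{C}$-module $M$ by combining a projective cover with the special exact sequences hypothesized in the statement, and then invoke Lemma \ref{waka1}. First I would take a projective presentation, or rather just an epimorphism $p\colon P\rightarrow M\rightarrow 0$ with $P$ projective (which exists since $\mathrm{mod}(\mathcal{C})$ has enough projectives), and let $N=\ker p$, so that we have a short exact sequence $0\rightarrow N\rightarrow P\xrightarrow{p} M\rightarrow 0$. By hypothesis there is an exact sequence $0\rightarrow N\rightarrow Y^N\rightarrow X^N\rightarrow 0$ with $X^N\in\mathcal{X}$ and $Y^N\in\mathcal{Y}$.

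Next I would form the pushout of $N\rightarrow P$ and $N\rightarrow Y^N$. This yields a commutative diagram whose middle row is a short exact sequence $0\rightarrow Y^N\rightarrow Z\rightarrow M\rightarrow 0$ and whose right-hand column gives a short exact sequence $0\rightarrow P\rightarrow Z\rightarrow X^N\rightarrow 0$. Since $P$ is projective, hence in $\mathcal{X}=\mathcal{F}(\Delta)$ (the representable functors lie in $\mathcal{F}(\Delta)$ because $\mathcal{C}$ is quasi-hereditary), and since $X^N\in\mathcal{X}$, and $\mathcal{X}$ is closed under extensions by hypothesis, we conclude $Z\in\mathcal{X}$. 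Thus the middle row $0\rightarrow Y^N\rightarrow Z\rightarrow M\rightarrow 0$ is an exact sequence with $Z\in\mathcal{X}$ and $Y^N\in\mathcal{Y}$.

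Finally, Lemma \ref{waka1} applies directly to this sequence: the map $Z\rightarrow M$ is a right $\mathcal{X}$-approximation of $M$. Since $M$ was arbitrary, every $\mathcal{C}$-module has a right $\mathcal{X}$-approximation, which is exactly the assertion.

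The only delicate point—not really an obstacle, but the step that requires care—is verifying that the pushout column really produces an extension of $X^N$ by $P$ with middle term $Z$, i.e. that one correctly identifies the cokernel of $P\rightarrow Z$ with $X^N$; this is a routine diagram chase using the $3\times 3$ lemma, comparing the pushout square with the given sequence $0\rightarrow N\rightarrow Y^N\rightarrow X^N\rightarrow 0$. Everything else is formal: enough projectives, stability of $\mathcal{F}(\Delta)$ under extensions (which holds for standardly filtered subcategories and is implicit in the earlier development), and the converse Wakamatsu lemma already recorded as Lemma \ref{waka1}.
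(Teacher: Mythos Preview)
Your argument is correct, and it is essentially the same pushout trick the paper uses in part~(b) of its proof, but with one simplification: you start from a projective epimorphism $P\twoheadrightarrow M$ rather than from the paper's more elaborate construction. The paper first builds the $\mathcal{X}$-trace $M'\subset M$ (the submodule generated by all images of maps $X'\to M$ with $X'\in\mathcal{X}$), then uses the Noetherian hypothesis and closure of $\mathcal{F}(\Delta)$ under finite sums to produce an epimorphism $X\twoheadrightarrow M'$ with $X\in\mathcal{X}$; only after that does it run the pushout against $0\to K\to Y^K\to X^K\to 0$ to obtain $Z\to M'$, and finally composes with the inclusion $M'\hookrightarrow M$.

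By invoking directly that finitely generated projectives lie in $\mathcal{X}=\mathcal{F}(\Delta)$ (which the paper establishes earlier from quasi-heredity), you bypass the $M'$ step entirely and obtain a surjective approximation $Z\twoheadrightarrow M$. This is cleaner, and it does not even use the Noetherian hypothesis for this lemma. The trade-off is that your proof, as written, is not a proof of the lemma for an \emph{arbitrary} extension-closed $\mathcal{X}$: it needs the extra hypothesis that $\mathcal{X}$ contains the projectives. Since the paper's own proof also specializes to $\mathcal{F}(\Delta)$ in its part~(a), and since the sole application is to $\mathcal{F}(\Delta)$, this is harmless---but you should state explicitly that you are using ``projectives lie in $\mathcal{X}$'' as an additional standing assumption.
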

\begin{proof}
 \textbf{a}) Let $M$ be a finitely generated $\mathcal{C}$-module. Then, there is an
 epimorphism $\pi:X\rightarrow M$ with $X\in\mathcal{X}$. Let $M'$  be the submodule of $M$ generated by the images of maps $X'\rightarrow M$ with $X'\in \mathcal{X}$. In this way, there  exists 
 an epimorphism $\coprod_{X\in\mathcal{F}(\Delta)}X\xrightarrow{\psi} M'$. Since $M'$ is a submodule of the finitely generated 
 $\mathcal{C}$-module $M$, there exists an epimorphism $\varphi:\mathcal{C}(-,E)\rightarrow M'$, since $\mathcal{C}$ is Noetherian. Therefore, there exists
  $i\ge 1$ such that $E\in\mathcal{B}_i$ and $\varphi$ lifts to $\psi$, in other words,   the exists $f:\mathcal{C}(-,E)\rightarrow \coprod_{X\in\mathcal{F}(\Delta)}X$ such that $\psi f=\varphi$. Then,  there is
   a finite family $\{X_j\}_{j=1}^n $ of objects in  $\mathcal{F}(\Delta)$ for which the image of $f$ is contained in $\coprod_{j=1}^n X_j$. Thus, there exists
   an epimorphism $\coprod_{j=1}^n X_j^{(i)}\rightarrow M'$. Since $\mathcal{F}(\Delta)$ is closed  under direct sums, we can take 
   $X=\coprod_{j=1}^n X_j^{(i)}$.
   \bigskip
   
 \textbf{b}) By part (a), there exists an exact sequence $0\rightarrow K\rightarrow X\xrightarrow{\pi} M'\rightarrow 0$ with $X\in\mathcal{X}$:
 \[
  \dgARROWLENGTH=.3em
\dgTEXTARROWLENGTH=.5em
   \begin{diagram}
    \node{}
    \node{0}\arrow{s,l}{}
     \node{0}\arrow{s,l}{}
      \node{}
       \node{}\\
   \node{0}\arrow{e,t}{}
    \node{K}\arrow{e,t}{}\arrow{s,l}{}
     \node{Y^K}\arrow{e,t}{}\arrow{s,l}{}
      \node{X^K}\arrow{e,t}{}\arrow{s,l,=}{}
       \node{0}\\
    \node{0}\arrow{e,t}{}
    \node{X}\arrow{e,t}{}\arrow{s,l}{\pi}
     \node{Z}\arrow{e,t}{}\arrow{s,r}{\gamma}
      \node{X^K}\arrow{e,t}{}
       \node{0}\\
      \node{}
     \node{M}\arrow{e,t,=}{}\arrow{s,l}{}
      \node{M}\arrow{s,l}{}
       \node{}
        \node{}  \\
     \node{}
    \node{0}
     \node{0}
      \node{}
       \node{}   
      \end{diagram}
  \]

 Since $X$ and $ X^{K}$  belong to $\mathcal{X}$, and $\mathcal{X}$ is closed under extensions, $Z\in\mathcal{X}$. Since $Y^{K}\in\mathcal{Y}$,  we use Lemma \ref{waka1}
  for the exact sequence which appears as middle column and conclude that $\gamma:Z\rightarrow M'$ is a right approximation.
  We denote by $\mu:M'\rightarrow M$ the inclusion map. It is clear that $\mu\gamma$ is a right approximation.
 \end{proof}

Of course $\mathcal{X}=\mathcal{F}(\Delta)$ is closed under extensions and coproducts by Lemma  \ref{FDelta} since  $0=\mathcal{B}_0\subset\cdots\subset \mathcal{B}_n=\mathcal{C}$ is finite. Morever it contains the projective objects of $\mathrm{mod}(\mathcal C)$ because $\mathcal{C}$ is quasi-hereditary; thus, it is a coresolving category of $\mathrm{mod}(\mathcal C)$.
 
 \begin{lemma}\label{lemmaF}
   Let $N\in\mathrm{mod}({\mathcal C})$ and $t\in\{1,\ldots,n\}$.
 \begin{itemize}
  \item [(i)] Assume that $N$ is a finitely generated $\mathcal{C}$-module. Then $\mathrm{Ext}^1(\Delta_E(t),N)$ is
   finitely generated for all $E\in\mathcal{B}_t$.
  \item[(ii)] Assume $N$ has finite length. Then, there is a finite number of $E$'s in $\mathcal{B}_t$ for which 
  $\mathrm{Ext}^1(\Delta_E(t),N)\neq 0$ (see [A3 Prop. 3.10]).
 \end{itemize}
  \end{lemma}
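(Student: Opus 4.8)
The plan is to establish (i) directly from a projective presentation of $\Delta_E(t)$, and then to deduce (ii) from (i) by a dévissage on $N$ together with a reduction, through the duality $D$, to a finiteness statement about the tops of the first syzygies of the modules $\Delta_E(t)$. Recall first that $\Delta_E(t)$ is finitely presented: by Theorem~\ref{TheoremQh}(ii), applied with $j=t-1$ and $X=E$, there is an exact sequence $\mathcal{C}(-,E_{t-2})\to\mathcal{C}(-,E_{t-1})\to I_{\mathcal{B}_{t-1}}(-,E)\to 0$ with $E_{t-1}\in\mathcal{B}_{t-1}$, and composing $\mathcal{C}(-,E_{t-1})\to I_{\mathcal{B}_{t-1}}(-,E)$ with the inclusion $I_{\mathcal{B}_{t-1}}(-,E)\hookrightarrow\mathcal{C}(-,E)$ coming from the defining short exact sequence $0\to I_{\mathcal{B}_{t-1}}(-,E)\to\mathcal{C}(-,E)\to\Delta_E(t)\to 0$ gives a projective presentation of $\Delta_E(t)$. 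For (i), apply $\mathrm{Hom}_{\mathcal{C}}(-,N)$ to that defining short exact sequence: since $\mathcal{C}(-,E)$ is projective one obtains an epimorphism $\mathrm{Hom}_{\mathcal{C}}(I_{\mathcal{B}_{t-1}}(-,E),N)\twoheadrightarrow\mathrm{Ext}^1_{\mathcal{C}}(\Delta_E(t),N)$, while the epimorphism $\mathcal{C}(-,E_{t-1})\twoheadrightarrow I_{\mathcal{B}_{t-1}}(-,E)$ together with Yoneda's lemma embeds $\mathrm{Hom}_{\mathcal{C}}(I_{\mathcal{B}_{t-1}}(-,E),N)$ into $N(E_{t-1})$. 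As $\mathcal{C}$ is $\mathrm{Hom}$-finite and $N$ is finitely generated, $N(E_{t-1})$ is a finite length $K$-module, hence so is its subquotient $\mathrm{Ext}^1_{\mathcal{C}}(\Delta_E(t),N)$; this proves (i), and the case of a decomposable $E$ follows by additivity.

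For (ii) I would induct on the length $\ell(N)$. If $0\to N'\to N\to N''\to 0$ is exact, the long exact sequence for $\mathrm{Ext}^1_{\mathcal{C}}(\Delta_E(t),-)$ shows that $\mathrm{Ext}^1_{\mathcal{C}}(\Delta_E(t),N)\ne 0$ forces $\mathrm{Ext}^1_{\mathcal{C}}(\Delta_E(t),N')\ne 0$ or $\mathrm{Ext}^1_{\mathcal{C}}(\Delta_E(t),N'')\ne 0$, so the set of indecomposable $E\in\mathcal{B}_t$ relevant to $N$ is contained in the (finite, by induction) union of those relevant to $N'$ and to $N''$. This reduces the problem to $N=S$ a simple $\mathcal{C}$-module, and every such has the form $S_C=\mathcal{C}(-,C)/\mathrm{rad}\,\mathcal{C}(-,C)$ for an indecomposable $C\in\mathcal{C}$.

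Now fix an indecomposable $E\in\mathcal{B}_t$ with $\Delta_E(t)\ne 0$, so that $E\notin\mathcal{B}_{t-1}$. Since $\mathcal{C}$ is Noetherian, $\Omega\Delta_E(t)=I_{\mathcal{B}_{t-1}}(-,E)$ is finitely presented, and since $\mathcal{C}$ is Krull-Schmidt it has a projective cover $\mathcal{C}(-,E_{t-1})\twoheadrightarrow I_{\mathcal{B}_{t-1}}(-,E)$ with $E_{t-1}$ a finite direct sum of indecomposables, all lying in $\mathcal{B}_{t-1}$ (which is closed under summands); thus $\mathrm{top}\,I_{\mathcal{B}_{t-1}}(-,E)\cong S_{E_{t-1}}$. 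Applying $\mathrm{Hom}_{\mathcal{C}}(-,S_C)$ to $0\to I_{\mathcal{B}_{t-1}}(-,E)\to\mathcal{C}(-,E)\to\Delta_E(t)\to 0$ and using $\mathrm{Ext}^1_{\mathcal{C}}(\mathcal{C}(-,E),S_C)=0$ gives $\mathrm{Ext}^1_{\mathcal{C}}(\Delta_E(t),S_C)=\mathrm{coker}\big(S_C(E)\to\mathrm{Hom}_{\mathcal{C}}(S_{E_{t-1}},S_C)\big)$, where the source is $\mathrm{Hom}_{\mathcal{C}}(\mathcal{C}(-,E),S_C)$ and the target is $\mathrm{Hom}_{\mathcal{C}}(I_{\mathcal{B}_{t-1}}(-,E),S_C)$, since $S_C$ is semisimple. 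If $C\notin\mathcal{B}_{t-1}$ then $\mathrm{Hom}_{\mathcal{C}}(S_{E_{t-1}},S_C)=0$, so $\mathrm{Ext}^1_{\mathcal{C}}(\Delta_E(t),S_C)=0$ for all $E$ and there is nothing to prove; otherwise $C\in\mathcal{B}_{t-1}$ forces $E\not\cong C$, so $S_C(E)=0$ and $\mathrm{Ext}^1_{\mathcal{C}}(\Delta_E(t),S_C)\cong\mathrm{Hom}_{\mathcal{C}}(S_{E_{t-1}},S_C)$, which is nonzero precisely when $C$ is a direct summand of $E_{t-1}$, i.e.\ when $S_C$ occurs in $\mathrm{top}\,\Omega\Delta_E(t)$.

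So (ii) is reduced to: for a fixed indecomposable $C\in\mathcal{B}_{t-1}$, the object $\mathcal{C}(-,C)$ occurs in a projective cover of $\Omega\Delta_E(t)=I_{\mathcal{B}_{t-1}}(-,E)$ for only finitely many indecomposable $E\in\mathcal{B}_t$. This is the functor-category analogue of [A3, Prop.~3.10], and it is the crux. The way to get at it is to apply the duality $D$ (available since $\mathcal{C}$ is dualizing): $D$ sends $\Delta_E(t)$ to the costandard $\mathcal{C}^{op}$-module $D\Delta_E(t)$, which has simple socle $S_E^{\,op}$ and hence indecomposable injective envelope $D\mathcal{C}(-,E)$, and it sends $\mathrm{top}\,\Omega\Delta_E(t)$ to the socle of the first cosyzygy $DI_{\mathcal{B}_{t-1}}(-,E)$; thus the condition becomes that $D\mathcal{C}(-,C)$ occurs in the second term of the minimal injective copresentation of $D\Delta_E(t)$ for only finitely many $E$. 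Because $\mathcal{C}$, and hence $\mathcal{C}^{op}$, is $\mathrm{Hom}$-finite and Krull-Schmidt its Auslander-Reiten quiver is locally finite, and, combined with the Noetherian hypothesis, this forces the desired finiteness exactly as in Auslander's argument; once it is in hand, parts (i) and (ii) are complete, noting that $D$ preserves length so the dévissage above is legitimate. The main obstacle is precisely this last step: parts (i) and the reduction to simple modules are formal, but controlling $\mathrm{top}\,I_{\mathcal{B}_{t-1}}(-,E)$ uniformly in $E$ is the real content and requires the full force of the Noetherian and dualizing hypotheses together with a careful transcription of [A3, Prop.~3.10] to the functor-category setting.
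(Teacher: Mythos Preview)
Your argument for (i) is essentially the paper's: apply $\mathrm{Hom}(-,N)$ to the defining short exact sequence of $\Delta_E(t)$, then bound $\mathrm{Hom}(I_{\mathcal{B}_{t-1}}(-,E),N)$ by embedding it in $N(E_{t-1})$ via the presentation of $I_{\mathcal{B}_{t-1}}(-,E)$.

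For (ii) you take a genuinely different, and harder, route than the paper. You run the d\'evissage directly on $\mathrm{Ext}^1$, reducing to simple $N=S_C$, and then analyze the top of $\Omega\Delta_E(t)=I_{\mathcal{B}_{t-1}}(-,E)$; this leaves you with the statement that, for fixed indecomposable $C\in\mathcal{B}_{t-1}$, the simple $S_C$ occurs in $\mathrm{top}\,I_{\mathcal{B}_{t-1}}(-,E)$ for only finitely many indecomposable $E\in\mathcal{B}_t$. You correctly identify this as the crux, but you do not prove it---the duality sketch in your last paragraph does not actually close the argument.

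The paper sidesteps this difficulty entirely by proving the $\mathrm{Hom}$ analogue first: there are only finitely many indecomposable $E\in\mathcal{B}_t$ with $\mathrm{Hom}(\Delta_E(t),N)\neq 0$, by the same induction on $\ell(N)$. The gain is that the base case for $\mathrm{Hom}$ is a one-liner: if $N=S_C$ is simple and $E\in\mathcal{B}_t\setminus\mathcal{B}_{t-1}$ is indecomposable, then any nonzero map $\Delta_E(t)\to S_C$ factors through $\mathrm{top}\,\Delta_E(t)=S_E$, so $E\cong C$ and there is at most one such $E$. With the $\mathrm{Hom}$ statement in hand, the paper passes to $\mathrm{Ext}^1$ in a single step using the injective envelope $0\to N\to I\to\Omega^{-1}N\to 0$: since $I$ is injective, $\mathrm{Ext}^1(\Delta_E(t),N)$ is a quotient of $\mathrm{Hom}(\Delta_E(t),\Omega^{-1}N)$, and one applies the $\mathrm{Hom}$ finiteness to $\Omega^{-1}N$.

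In short, by attacking $\mathrm{Ext}^1$ directly you have pushed the hard work into the base case, where it becomes a nontrivial uniform finiteness statement about the syzygies $\Omega\Delta_E(t)$ that you leave open; the paper's trick of proving the $\mathrm{Hom}$ version first and then shifting to $\mathrm{Ext}^1$ via the cosyzygy of $N$ makes the base case trivial and renders your final duality discussion unnecessary.
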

 
\begin{proof}
i) Consider the folowing exact senquences:
 \begin{eqnarray*}
  0\rightarrow I_{\mathcal{B}_t}(-,E)\rightarrow \mathcal{C}(-,E)\rightarrow \Delta_E(t)\rightarrow 0,\\
  \mathcal{C}(-,E_{t-1})\rightarrow \mathcal{C}(-,E_{t})\rightarrow  I_{\mathcal{B}_t}(-,E)\rightarrow 0.
\end{eqnarray*}
After applying $(-,N)$  to the  above the exact sequences, we get the following exact sequences:
\begin{eqnarray*}
0\rightarrow(\Delta_E(t),N)\rightarrow(\mathcal{C}(-,E),N)\rightarrow (I_{\mathcal{B}_t}(-,E),N)\rightarrow 
\mathrm{Ext}^1(\Delta_E(t),N)\rightarrow 0,\\
0\rightarrow ( I_{\mathcal{B}_t}(-,E),N)\rightarrow (\mathcal{C}(-,E_{t}),N)\cong N(E_t).
\end{eqnarray*}
 Thus,  
 $( I_{\mathcal{B}_t}(-,E),N)$ is finitely  generated,  and it  follows that $\mathrm{Ext}^1(\Delta_E(t),N)$ is finitely
 generated.

ii)  First, we prove that if $N$ is   of finite length, there is then a finite number of $E$'s in $\mathcal{B}_t$
for which $\mathrm{Hom}(\Delta_E(t),N)\neq 0$. By induction on the length 
 $l(N)$ of $N$. If  $l(N)=1$, $N$ is simple and the claim is true in this case. Assume that the fact is true for 
 modules with lenght $<l(N)$, and consider  the 
 exact sequence $0\rightarrow S\rightarrow N\rightarrow N/S\rightarrow 0$ which implies 
 \[
  0\rightarrow (\Delta_E(t),S)\rightarrow (\Delta_E(t),N)\rightarrow (\Delta_E(t),N/S)
 \]
Then there is a finite number of $E$'s in $\mathcal{B}_t$
for which $(\Delta_E(t),S)\neq 0$ and $(\Delta_E(t),N/S)\neq 0$; therefore 
there is a finite number of $E$'s in $\mathcal{B}_t$ for which $(\Delta_E(t),N)\neq 0$.

Now consider the exact sequence $0\rightarrow N\rightarrow I\rightarrow \Omega^{-1}N\rightarrow 0$ where $I$ is the injective envelope of $N$. It implies
\begin{eqnarray*}
 0\rightarrow (\Delta_E(t),N)\rightarrow (\Delta_E(t),N)\rightarrow (\Delta_E(t),\Omega^{-1}N)\rightarrow \mathrm{Ext}^1(\Delta_E(t),N)
 \rightarrow 0.
\end{eqnarray*}

Since $(\Delta_E(t),\Omega^{-1}N)\neq 0 $ only for  a finite number of $E$'s  in $\mathcal{B}_t$, it follows
that $\mathrm{Ext}^1(\Delta_E(t),N)\neq 0$ only for  a finite number of $E$'s  in $\mathcal{B}_t$.

\end{proof}

\begin{lemma}\label{Contravariant}
Let $1\le t\le n$. Let $N$  be a $\mathcal{C}$-module with $\mathrm{Ext}^1(\Delta(j),N)=0$ for all $j<t$. Then  
there exists an exact sequence $0\rightarrow N\rightarrow N'\rightarrow Q\rightarrow 0$ with $Q$ a direct sum of objects 
from $\Delta(t)$ and $ \mathrm{Ext}^1(\Delta(j),N')=0$ for all $j\le t$.
\end{lemma}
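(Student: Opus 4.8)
The plan is to build $N'$ by a single universal extension that "kills" $\mathrm{Ext}^1(\Delta(t),N)$, and then to check that this process does not create new $\mathrm{Ext}^1$ obstructions against $\Delta(j)$ for $j\le t$. First I would invoke Lemma~\ref{lemmaF}: since we may assume $N$ has finite length (the dualizing Noetherian hypothesis lets us reduce to this case, and in any event the argument of Ringel treats finitely generated modules by the same device), there are only finitely many indecomposable $E_1,\dots,E_r\in\mathcal{B}_t$ with $\mathrm{Ext}^1(\Delta_{E_s}(t),N)\neq 0$, and each such $\mathrm{Ext}^1(\Delta_{E_s}(t),N)$ is a finitely generated module over the division algebra $\mathrm{End}(\Delta_{E_s}(t))$ (Schurian by Lemma~\ref{DeltaLemma1}), hence finite-dimensional, say of dimension $d_s$. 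Take $Q=\coprod_{s=1}^r \Delta_{E_s}(t)^{d_s}$ and let
\[
0\rightarrow N\rightarrow N'\rightarrow Q\rightarrow 0
\]
be the extension corresponding, under $\mathrm{Ext}^1(Q,N)\cong\prod_s \mathrm{Ext}^1(\Delta_{E_s}(t),N)^{d_s}$, to a basis of each factor; this is the standard universal/Bongartz-type extension.

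Next I would verify $\mathrm{Ext}^1(\Delta(j),N')=0$ for $j\le t$. Apply $\mathrm{Hom}(\Delta_E(j),-)$ to the short exact sequence above to get
\[
\mathrm{Ext}^1(\Delta_E(j),N)\rightarrow \mathrm{Ext}^1(\Delta_E(j),N')\rightarrow \mathrm{Ext}^1(\Delta_E(j),Q)\rightarrow \mathrm{Ext}^2(\Delta_E(j),N).
\]
For $j<t$: the first term vanishes by hypothesis on $N$, and $\mathrm{Ext}^1(\Delta_E(j),Q)=0$ because $Q$ is a sum of objects of $\Delta(t)$ and $\mathrm{Ext}^1(\Delta(j),\Delta(t))=0$ when $j<t$ by Lemma~\ref{DeltaLemma1}(2) (there $\mathrm{Ext}^1(\Delta(i),\Delta(j))\neq0$ forces $i>j$, so with $i=j'<t=j''$... concretely $\mathrm{Ext}^1(\Delta(j),\Delta(t))\neq 0\Rightarrow j>t$, contradiction). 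Hence $\mathrm{Ext}^1(\Delta_E(j),N')=0$. For $j=t$: here I must use the connecting map $\delta\colon\mathrm{Hom}(\Delta_E(t),Q)\rightarrow\mathrm{Ext}^1(\Delta_E(t),N)$ from the long exact sequence
\[
\mathrm{Hom}(\Delta_E(t),Q)\xrightarrow{\ \delta\ }\mathrm{Ext}^1(\Delta_E(t),N)\rightarrow\mathrm{Ext}^1(\Delta_E(t),N')\rightarrow\mathrm{Ext}^1(\Delta_E(t),Q),
\]
and $\mathrm{Ext}^1(\Delta_E(t),Q)=0$ since $Q\in\mathrm{add}\,\Delta(t)$ and $\mathrm{Ext}^1$ vanishes within a single $\Delta$-level (again Lemma~\ref{DeltaLemma1}(2): $\mathrm{Ext}^1(\Delta(t),\Delta(t))\neq0\Rightarrow t>t$, impossible). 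So $\mathrm{Ext}^1(\Delta_E(t),N')=\mathrm{coker}\,\delta$, and the construction of the extension as a basis of each $\mathrm{Ext}^1(\Delta_{E_s}(t),N)$ makes $\delta$ surjective onto each of these; for $E$ not among the $E_s$ (up to isomorphism) one has $\mathrm{Ext}^1(\Delta_E(t),N)=0$ already. Either way $\mathrm{coker}\,\delta=0$, giving $\mathrm{Ext}^1(\Delta(t),N')=0$.

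The surjectivity of $\delta$ onto $\mathrm{Ext}^1(\Delta_E(t),N)$ is the crux and the step I expect to be the main obstacle: it is exactly the statement that the chosen element of $\mathrm{Ext}^1(Q,N)$ is "universal" in the appropriate sense. The clean way to see it is to use the Yoneda/composition description: an element $\xi\in\mathrm{Ext}^1(\Delta_{E_s}(t),N)$ is the image under $\delta$ of the corresponding split inclusion $\Delta_{E_s}(t)\hookrightarrow\Delta_{E_s}(t)^{d_s}\hookrightarrow Q$ precomposed with the chosen class, since $\mathrm{End}(\Delta_{E_s}(t))$ is a division algebra and the $d_s$ copies were arranged to span $\mathrm{Ext}^1(\Delta_{E_s}(t),N)$; naturality of $\delta$ in the first variable then upgrades this from "hits a spanning set" to "is surjective". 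I would also record explicitly that $\mathrm{Hom}(\Delta_{E_s}(t),\Delta_{E_{s'}}(t))$ vanishes for $s\neq s'$ or is a division algebra for $s=s'$ (Lemma~\ref{DeltaLemma1}(1) and Schurianness), so that $\mathrm{Ext}^1(Q,N)$ decomposes as the asserted product and the "basis" choice makes sense componentwise. Finally, $Q\in\mathcal{F}(\Delta)$ trivially, and by construction $Q$ is a direct sum of objects of $\Delta(t)$, completing the proof.
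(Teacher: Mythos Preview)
Your proposal is correct and follows essentially the same route as the paper: both construct $N'$ as the universal (Bongartz-type) extension of $N$ by $Q=\coprod_s\Delta_{E_s}(t)^{d_s}$, using Lemma~\ref{lemmaF} to ensure only finitely many $E_s$ and finite $d_s$ are needed, and then run the long exact sequence in $\mathrm{Hom}(\Delta(j),-)$ together with Lemma~\ref{DeltaLemma1}(2) to kill $\mathrm{Ext}^1(\Delta(j),N')$ for $j\le t$. Your treatment of the surjectivity of the connecting map $\delta$ is in fact more explicit than the paper's, which simply asserts it; the pushout diagram the paper draws is just the concrete realization of your ``extension corresponding to a basis''.
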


Let us fix an integer $t\ge 0$ and set $\Delta_{E}(t)=\mathcal{C}(-,E)/I_{\mathcal{B}_{t-1}}(-,E)$. By  Lemma \ref{lemmaF}, the family
\[ 	
B_t=\{E\in\mathcal{B}_t|\mathrm{Ext}(\Delta_E(t),N)\neq 0\}
\]
has a finite number of objects, say $m_E$.  For all $E\in B_t$, let
\[
\{\xi_E^i:0\rightarrow N\rightarrow W_E^i\rightarrow \Delta_E(t)\rightarrow 0|1\le i\le m_E\}
\]  be a $K$-basis for 
$\mathrm{Ext}^1(\Delta_E(t),N)$. Thus, we can  take  $\{\xi_E^i|1\le i\le m_E\}_{E\in B_t}$ as a $K$-basis for  $\mathrm{Ext}^1(\coprod_{E\in B_t}\Delta_E(t),N)$.

In this way, we have  a push-out diagram:
\[
\begin{diagram}
 \dgARROWLENGTH=.3em
\dgTEXTARROWLENGTH=.5em
\node{0}\arrow{e,t}{}
 \node{\coprod_{E\in B_t}N^{m_E}}\arrow{e,t}{}\arrow{s,l}{[1,\ldots,1]}
  \node{\coprod_{E\in B_t}\coprod _{i=1}^{m_E}W_E^i}\arrow{e,t}{}\arrow{s,l}{}
   \node{\coprod_{E\in B_t}\Delta_E(t)^{m_E}}\arrow{e,t}{}\arrow{s,l,=}{}
    \node{0}\\
    \node{0}\arrow{e,t}{} 
     \node{N}\arrow{e,t}{}
     \node{N'}\arrow{e,t}{}
      \node{\coprod_{E\in B_t}\Delta_E(t)^{m_E}}\arrow{e,t}{}
       \node{0.}
\end{diagram}
\]
Let ${u_E ^i}'':N\rightarrow\coprod_{E\in B_t}N^{m_E}$,  ${u_E ^i}':W_E^i\rightarrow\coprod_{E\in B_t}\coprod _{i=1}^{m_E}W_E^i$, 
 $u_E ^i:\Delta_E(t)\rightarrow\coprod_{E\in B_t}\Delta_E(i)^{m_E}$ the corresponding inclusions. Thus, we have the following commutative diagram
 \[
\begin{diagram}
 \dgARROWLENGTH=.3em
\dgTEXTARROWLENGTH=.5em
\node{0}\arrow{e,t}{}
 \node{N}\arrow{e,t}{}\arrow{s,l}{{u_E^i}''}
  \node{W_E^i}\arrow{e,t}{}\arrow{s,l}{{u_E^i}'}
   \node{\Delta_E(i)}\arrow{e,t}{}\arrow{s,l}{{u_E^i}}
    \node{0}\\
\node{0}\arrow{e,t}{}
 \node{\coprod_{E\in B_t}N^{m_E}}\arrow{e,t}{}\arrow{s,l}{[1,\ldots,1]}
  \node{\coprod_{E\in B_t}\coprod _i^{m_E}W_E^i}\arrow{e,t}{}\arrow{s,l}{}
   \node{\coprod_{E\in B_t}\Delta_E(i)^{m_E}}\arrow{e,t}{}\arrow{s,l,=}{}
    \node{0}\\
    \node{0}\arrow{e,t}{} 
     \node{N}\arrow{e,t}{}
     \node{N'}\arrow{e,t}{}
      \node{\coprod_{E\in B_t}\Delta_E(i)^{m_E}}\arrow{e,t}{}
       \node{0.}
\end{diagram}
\]


By  Lemma  \ref{DeltaLemma1},  we have that  $\mathrm{Ext}^1(\Delta(j),\coprod_{E\in B_t}\Delta_E(t)^{m_E})=0$  for all $j\le t$. In this way, after applying $\mathcal{C}(\Delta(j),-)$ to the exact sequence at the bottom, we have

\begin{eqnarray*}
0\rightarrow (\Delta(j),N)\rightarrow (\Delta(j),N')\rightarrow (\Delta(j),\coprod_{E\in B_t}\Delta_E(t)^{m_E})\xrightarrow{\delta}\mathrm{Ext}^1(\Delta(j),N)\rightarrow\\
\mathrm{Ext}^1(\Delta(j),N')
\rightarrow\mathrm{Ext}^1(\Delta(j),\coprod_{E\in B_t}\Delta_E(t)^{m_E})=0
\end{eqnarray*}

Assume that $j<t$. Therefore, by  hypothesis we have $\mathrm{Ext}^1(\Delta(j),N)=0$, and therefore  $\mathrm{Ext}^1(\Delta(j),N')=0$. It only remains to verify the case $j=t$.
Let $\Delta_{E'}(t)=\mathcal{C}(-,E')/I_{\mathcal{B}_{t-1}}(-,E')\in\Delta(t)$. If $E'\notin B_t$, we have $\mathrm{Ext}^1(\Delta_{E'}(t),N)=0$; thus we have
$\mathrm{Ext}^1(\Delta_{E'}(t),N')=0$. If $E'\in B_t$  the connection morphism $\delta$ is an epimorphism, and we conclude $\mathrm{Ext}^1( \Delta_{E'}(t),N')=0$.

\begin{lemma}
Let $1\le t\le n$. Let $N$  be a $\mathcal{C}$-module with $\mathrm{Ext}^1(\Delta(j),N)=0$ for all $j<t$. Then  
there exists an exact sequence $0\rightarrow N\rightarrow Y\rightarrow X\rightarrow 0$ with 
$X\in\mathcal{F}(\{\Delta(1),\ldots,\Delta(t)\})$ and $Y\in\mathcal{Y}(\Delta)$. 
\end{lemma}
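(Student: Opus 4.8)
The plan is to prove the statement by iterating the previous Lemma \ref{Contravariant}, starting at the index $t$ and climbing to $n$. Set $N_{t-1}:=N$; by hypothesis $\mathrm{Ext}^1(\Delta(j),N_{t-1})=0$ for all $j<t$. Assume inductively that for some $k$ with $t\le k\le n$ we have produced a module $N_{k-1}$ with $\mathrm{Ext}^1(\Delta(j),N_{k-1})=0$ for all $j<k$. Applying Lemma \ref{Contravariant} with parameter $k$ gives a short exact sequence
\[
0\rightarrow N_{k-1}\rightarrow N_k\rightarrow Q_k\rightarrow 0,
\]
where $Q_k$ is a finite direct sum of objects of $\Delta(k)$ and $\mathrm{Ext}^1(\Delta(j),N_k)=0$ for all $j\le k$, which is exactly the hypothesis needed to continue with $k+1$. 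After running this step for $k=t,t+1,\ldots,n$ we reach a module $Y:=N_n$ with $\mathrm{Ext}^1(\Delta(j),Y)=0$ for all $1\le j\le n$, i.e.\ $Y\in\mathcal{Y}(\Delta)$. Note that it is precisely the finiteness of the filtration $0=\mathcal{B}_0\subset\cdots\subset\mathcal{B}_n=\mathcal{C}$ that makes the procedure terminate, the Ext-vanishing range growing by one at each stage.

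Next I would assemble the extension. Composing the monomorphisms $N=N_{t-1}\hookrightarrow N_t\hookrightarrow\cdots\hookrightarrow N_n=Y$ yields a monomorphism $N\hookrightarrow Y$; put $X:=Y/N$. Setting $X_k:=N_k/N$ for $t-1\le k\le n$ gives a filtration $0=X_{t-1}\subseteq X_t\subseteq\cdots\subseteq X_n=X$ whose successive quotients are $X_k/X_{k-1}\cong N_k/N_{k-1}\cong Q_k$, a finite direct sum of objects of $\Delta(k)$, for $k=t,\ldots,n$. Thus $X$ carries a filtration with all subquotients direct sums of objects of $\Delta(t),\Delta(t+1),\ldots,\Delta(n)$; in particular $X\in\mathcal{F}(\Delta)$. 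The short exact sequence $0\rightarrow N\rightarrow Y\rightarrow X\rightarrow 0$ is then the one required.

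The one point that needs care — and the step I expect to be the only real (though minor) obstacle — is checking that the filtration $\{X_k\}$ just built actually is the trace filtration of $X$, so that $X$ has a $\Delta$-filtration in the precise sense used in the paper. For this, as in the proof of Lemma \ref{traceFunctor}, one verifies that each $\Delta_E(k)$ satisfies $\Delta_E(k)^{(j)}=0$ for $j<k$ and $\Delta_E(k)^{(k)}=\Delta_E(k)$, hence $Q_k^{(j)}=0$ for $j<k$; a short induction on the length of the filtration of $X/X_j$ then gives $(X/X_j)^{(j)}=0$, while $X_j\subseteq X^{(j)}$ since $X_j$ is already generated over $\mathcal{B}_j$. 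Combining these identities yields $X^{(j)}=X_j$ for every $j$, so that $X^{(j)}/X^{(j-1)}\cong Q_j$ is a finite direct sum of objects of $\Delta(j)$ and vanishes for $j<t$. Everything else is formal, resting entirely on the already-established Lemma \ref{Contravariant} (and, through it, on Lemma \ref{DeltaLemma1}); this lemma is exactly the input required to verify the hypotheses of Lemma \ref{aprox}.
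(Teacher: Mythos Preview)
Your proof is correct and follows the same strategy as the paper's: iterate Lemma~\ref{Contravariant} and splice the resulting short exact sequences. Your indexing (starting at $N_{t-1}=N$ and climbing to $N_n=Y$) is in fact cleaner than the paper's, whose indices run the wrong way and do not match the assertion ``$\mathrm{Ext}^1(\Delta(j),N_i)=0$ for $j\le i$''; consequently the paper's stated range $X\in\mathcal{F}(\{\Delta(1),\ldots,\Delta(t)\})$ is a typo --- the argument (yours and the paper's intended one, as in Ringel) actually yields $X\in\mathcal{F}(\{\Delta(t),\ldots,\Delta(n)\})\subset\mathcal{F}(\Delta)$, which is what is needed for the subsequent corollary. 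Your extra paragraph verifying that the constructed filtration of $X$ agrees with its trace filtration is a point the paper passes over silently, and your sketch of it is sound.
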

\begin{proof}
 By Lemma \ref{Contravariant} we can construct  exact sequences $0\rightarrow N_{i+1}\rightarrow N_i\rightarrow Q_i \rightarrow 0$,
 $1\le i\le t$,  with $\mathrm{Ext}^1(\Delta(j),N_i)=0$ for all $j\le i$, and $Q_i$ is 
   a direct finite  sum of objects from $\Delta(i)$. Set $N=N_{t+1}$, $N_1=Y$ and $X=Y/N$.

   Thus, $Y\in\mathcal{Y}$ and there is a filtration for $X$
 \[
  0\subset N_t/N\subset\cdots\subset N_1/N=Y/N=X
 \]
such that $(N_{i+1}/N)/(N_{i}/N)\cong N_{i+1}/N_{i}\cong Q_i $.
\end{proof}

Since $\Delta(1)$ consists only of projective objects, of particular interest is when $t=2$ in the above theorem. As a consequence, we have the following.

\begin{lemma}
 For every $\mathcal{C}$-module $N$, there exists an exact sequence 
 $0\rightarrow N\rightarrow Y\rightarrow X\rightarrow 0$ with $X\in\mathcal{F}(\Delta)$ and $Y\in\mathcal{Y}(\Delta)$.
\end{lemma}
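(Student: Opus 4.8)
The plan is to read this off immediately from the preceding lemma by taking $t = 2$. First I would recall why its hypothesis is automatic in that case: since $I_{\mathcal{B}_0} = 0$, the standard subcategory $\Delta(1)$ is just $\{\mathcal{C}(-,E) \mid E \in \mathcal{B}_1\}$, a family of representable, hence projective, $\mathcal{C}$-modules, so $\mathrm{Ext}^1(\Delta(1), N) = 0$ for every $\mathcal{C}$-module $N$. Thus any $N$ satisfies ``$\mathrm{Ext}^1(\Delta(j), N) = 0$ for all $j < 2$'', and the preceding lemma applies to it with $t = 2$.

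Next I would invoke that lemma to obtain an exact sequence $0 \to N \to Y \to X \to 0$ with $Y \in \mathcal{Y}(\Delta)$ and $X \in \mathcal{F}(\{\Delta(1), \Delta(2)\})$, and then check that $\mathcal{F}(\{\Delta(1), \Delta(2)\}) \subseteq \mathcal{F}(\Delta)$. For the latter, each $\Delta_E(i)$ lies in $\mathcal{F}(\Delta)$ because its trace filtration relative to $\{\mathcal{B}_j\}$ is concentrated in degree $i$, with $i$-th layer equal to $\Delta_E(i) \in \Delta(i)$; and $\mathcal{F}(\Delta)$ is closed under finite direct sums and, by Theorem \ref{FDelta}, under extensions. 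Hence any module admitting a finite filtration whose subquotients lie in $\Delta(1) \cup \Delta(2)$ belongs to $\mathcal{F}(\Delta)$; in particular so does $X$. (For $t = 2$ the extension defining $X$ has projective cokernel, namely a sum of objects of $\Delta(1)$, so it even splits and $X$ is literally a direct sum of objects from $\Delta(1)$ and $\Delta(2)$.)

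There is no real obstacle here; the only thing needing comment is the degenerate case $n = 1$, where $t = 2$ lies outside the allowed range. But if $n = 1$, heredity of $\mathcal{C}(-,?) = I_{\mathcal{B}_1}$ in $\mathcal{C}$ forces $\mathrm{rad}\,\mathcal{C}(-,?) = 0$, so $\mathcal{C}$ is semisimple and $\mathrm{mod}(\mathcal{C}) = \mathcal{F}(\Delta) = \mathcal{Y}(\Delta)$; then one simply takes $Y = N$ and $X = 0$. The substance of the statement is therefore exactly the remark preceding it: because $\Delta(1)$ is projective, constructing the desired extension only requires annihilating $\mathrm{Ext}^1$ against the single layer $\Delta(2)$, which the preceding lemma achieves.
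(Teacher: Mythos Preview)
Your proposal is correct and follows exactly the paper's approach: the paper's entire argument is the one-line remark preceding the statement, namely that since $\Delta(1)$ consists of projectives, the hypothesis of the preceding lemma is vacuous for $t=2$. You have simply fleshed out the details (the inclusion $\mathcal{F}(\{\Delta(1),\Delta(2)\})\subseteq\mathcal{F}(\Delta)$ and the degenerate case $n=1$) that the paper leaves implicit.

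One small inaccuracy in your closing intuition: applying the preceding lemma with $t=2$ does not merely ``annihilate $\mathrm{Ext}^1$ against the single layer $\Delta(2)$''; its proof iterates Lemma~\ref{Contravariant} through all layers $\Delta(2),\ldots,\Delta(n)$ to reach $Y\in\mathcal{Y}(\Delta)$, so $X$ is really filtered by $\Delta(2),\ldots,\Delta(n)$ (the paper's statement $X\in\mathcal{F}(\{\Delta(1),\ldots,\Delta(t)\})$ appears to be a typo for $\mathcal{F}(\{\Delta(t),\ldots,\Delta(n)\})$). This does not affect your formal argument, since either way $X\in\mathcal{F}(\Delta)$, but your parenthetical about the extension splitting and $X$ being a direct sum of objects from $\Delta(1)\cup\Delta(2)$ is not warranted in general.
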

The proof of  Theorem \ref{RingT} follows as in [ring2], and it follows from  Lemma  \ref{aprox}. Furthermore,  we may  use duality in order also obtain left $\mathcal{F}(\Delta)$-approximations.
\bigskip

Since $\mathcal{C}$ is dualizing, we can define analogously as in \cite{Dlab} and \cite{DR} the category $\mathcal{F}(\nabla)$  as follows. Denote
  $\mathcal{F}(\Delta^{\circ})$ as the full subcategory of covariant $\mathcal{C}$-modules  with $\Delta^{\circ}$-filtration and by
  $\mathcal{F}(\nabla)\cong \mathcal{F}(\nabla^\circ)^\circ$ the category of all contravariant $\nabla$-filtered $\mathcal{C}$-modules. In this way we could try in the future to define  the concept of characteristic category such as the category $\omega=\mathcal{F}(\Delta)\cap\mathcal{F}(\nabla)$ and to link  the theorem obtained by Ringel (see Theorem 5 in \cite{Rin2})  and Theorem 12 in \cite{MVO2}.

\section{Examples}
  In this section, we exhibit some filtrations for the different Auslander-Reiten components to consider them   as quasi-hereditary categories. We have already mentioned the importance of studying these components at the beginning of this work in relation to \cite{MVS3}. Later we obtain  a tilting category  for $\mathbb{Z}A_\infty$ motivated by the theory developed in the third section. Finally,  we show that tensor product of quasi-hereditary categories is again a quasi-hereditary category; in this way we can build more examples from others already given.

\subsection{Auslander Reiten components seem like quasi-hereditary categories}
Let $\Gamma=(\Gamma_0,\Gamma_1, \tau)$ be a translation quiver with translation $\tau:\Gamma_0'\rightarrow \Gamma_0$ defined on a subset $\Gamma_0'\subset\Gamma_0$. The vertices in $\Gamma_0$ which do not belong to $\Gamma_0'$ are called projective. Given  a  translation quiver 
 $\Gamma=(\Gamma_0,\Gamma_1, \tau)$, a  semitranslation can be defined  $\sigma:\Gamma_1'\rightarrow \Gamma_1$, where $\Gamma_1'\subset\Gamma_1$ is the set  of all  arrows $\alpha:a\rightarrow b$ with  $b$ not projective, such that $\sigma(\alpha):\tau b\rightarrow a$ for $\alpha:a\rightarrow b$ \cite{Rin1,ASS}. We can consider the mesh category  $K(\Gamma,\sigma)$, i.e., the quotient category of the path category of $(\Gamma_0,\Gamma_1)$ modulo the mesh ideal. Remember that the mesh ideal in the path category of $(\Gamma_0,\Gamma_1)$ is the one generated by the elements $m_x=\Sigma_{\{\alpha\in\Gamma_1:t(\alpha)=x\}}$, where $t(\alpha)$ means the target of $\alpha$ and $x$ is a non-projective vertex.

 In this part, we take $\Gamma$ as $\mathbb{Z}\Sigma$ where $\Sigma$ is one of the following: $D_\infty$, $A_\infty$ or $A_\infty^\infty$, or $\Gamma=\mathbb{N}\Sigma$,  the full translation  subquiver of $\mathbb{Z}\Sigma$, where $\Sigma$ is an extended Dynkin diagram. In this way, we will see that we think of the mesh category $K(\Gamma,\sigma)$  as a quasi-hereditary category; simultaneously,  we can study the category of representations
 $Rep(K\Gamma,\sigma)$  as the category of $K(\Gamma,\sigma)$-modules.

  Now we show some of the filtrations  for different mesh categories  mentioned above. In order to do this, we pick out full subcategories
  $B_i\subset \mathcal{C}$, and after we consider the filtration $0=\mathcal{B}_0\subset\mathcal{B}_1\subset\cdots\cdots\subset\mathcal{B}_i\subset\cdots\mathcal{C}$, with $\mathcal{B}_i=\mathrm{add}B_i$:  the full subcategory of $\mathcal{C}$ consisting of directed summands of finite sums. Thus we obtain a filtration  of $\mathcal C$ into closely additive subcategories. 
  \bigskip

  \textbf{The category $K(\mathbb{Z}A_\infty^\infty,\sigma)$}.  Let $B_1=\{E^1_1\}$, and $B_i=B_{i-1}\cup\{E^i_j\}_{1\le j\le 4(i-1)}$ as is shown in the next picture.
  
  \begin{center}
\begin{tikzpicture}
\foreach \x in {1,...,7}  
\draw[->] (\x+.1,.1) to (\x+0.4,0.4);
\foreach \x in {1,...,7}  
\draw[->] (\x +.6,0.6) to (\x+.9,.9);
\foreach \x in {1,...,7}  
\draw[->] (\x+.1,1.1) to (\x+0.4,1.4);
\foreach \x in {1,...,7}  
\draw[->] (\x+.6,1.6) to (\x+.9,1.9);
\foreach \x in {1,...,7}  
\draw[->] (\x+.1,2.1) to (\x+0.4,2.4);
\foreach \x in {1,...,7}  
\draw[->] (\x+.6,2.6) to (\x+.9,2.9);

\foreach \x in {1,...,7}  
\draw[->] (\x+.6,0.4) to (\x+.9,.1);
\foreach \x in {1,...,7}  
\draw[->] (\x+.1,.9) to (\x+0.4,0.6);
\foreach \x in {1,...,7}  
\draw[->] (\x+.6,1.4) to (\x+.9,1.1);
\foreach \x in {1,...,7}  
\draw[->] (\x+.1,1.9) to (\x+0.4,1.6);
\foreach \x in {1,...,7}  
\draw[->] (\x+.6,2.4) to (\x+.9,2.1);
\foreach \x in {1,...,7}  
\draw[->] (\x+.1,2.9) to (\x+.4,2.6);

\draw (4.5,1.5) node{\tiny{$E^1_1$}}; 

\draw (4,2) node{\tiny{$E^2_1$}};
 \draw (5,2) node{\tiny{$E^2_2$}};
   \draw (5,1) node{\tiny{$E^2_3$}};
    \draw (4,1) node{\tiny{$E^2_4$}};
    
     \draw (3.5,2.5) node{\tiny{$E^3_1$}};
 \draw (4.5,2.5) node{\tiny{$E^3_2$}};
   \draw (5.5,2.5) node{\tiny{$E^3_3$}};
    \draw (5.5,1.5) node{\tiny{$E^3_4$}};
    \draw (5.5,0.5) node{\tiny{$E^3_5$}};
     \draw (4.5,0.5) node{\tiny{$E^3_6$}};
      \draw (3.5,0.5) node{\tiny{$E^3_7$}};
       \draw (3.5,1.5) node{\tiny{$E^3_8$}};

     \draw (3,3) node{\tiny{$E^4_1$}};
 \draw (4,3) node{\tiny{$E^4_2$}};
   \draw (5,3) node{\tiny{$E^4_3$}};
    \draw (6,3) node{\tiny{$E^4_4$}};
    \draw (6,2) node{\tiny{$E^4_5$}};
     \draw (6,1) node{\tiny{$E^4_6$}};
      \draw (6,0) node{\tiny{$E^4_7$}};
       \draw (5,0) node{\tiny{$E^4_8$}};
 \draw (4,0) node{\tiny{$E^4_9$}};
   \draw (3,0) node{\tiny{$E^4_{10}$}};
    \draw (3,1) node{\tiny{$E^4_{11}$}};
    \draw (3,2) node{\tiny{$E^4_{12}$}};

\draw (4.5,-.5) node{Labeled vertices in the graph $\mathbb{Z}\times A_{\infty}^\infty$.};
  \end{tikzpicture}
\end{center}

   Now we will  see how the   conditions of  Theorem \ref{TheoremQh}  are satisfied on $K(\mathbb{Z}A_\infty^\infty,\sigma)$. We only explain this case.
   \bigskip
   
    i)  Indeed, by commutativity of the diagram,  every path between  a  pair of non isomorphic indecomposable objects  $E,E'\in\mathcal{B}_i$  factors throughout an object $E''\in\mathcal{B}_{i-1}$. Therefore, $\mathrm{rad}_{\mathcal{C}}(E,E')=I_{\mathcal{B}_{i-1}}(E,E')$.
  
 ii)  Let  $X$ be an indecomposable object  in  $K(\mathbb{N}A_{\infty}^{\infty},\sigma)$.  If  there is a path from $E_1$ to $X$, then $I_{\mathcal B_1}(-,X)=\mathcal{C}(-,E_1)$,  else $I_{\mathcal B_1}(-,X)$ is the zero functor. This implies that $I_{\mathcal B_1}(-,X)$ is a projective
  $K(\mathbb{N}A_{\infty}^{\infty},\sigma)$-module. For $i> 1$, if $\mathcal{C}(-,X)|_{\mathcal{B}_i}=0$, then   there is no a path from $B$ to $X$ for any $B\in \mathcal{B}_i$, and  $I_{\mathcal{B}_i}(-,X)=0$; if $X\in  \mathcal{B}_i$ then $I_{\mathcal B_1}(-,X)=\mathcal{C}(-,X)$. Assume that
   $\mathcal{C}(-,X)|_{\mathcal{B}_i}\neq 0$ and $X\notin\mathcal{B}_i$, then there is a finite set of indecomposable objects  $\{ E'_1, ..., E'_r\}\subset \mathcal{B}_i$ which are  vertically aligned for which there is a path  from $E'_j$ to $X$ for $j=1,\ldots,r$. Thus,   $I_{\mathcal B_i}(-,X)$ can be covered by     $\coprod_{j=1}^r \mathcal{C}(-,E'_j)$; therefore, there is an epimorphism $  \mathcal{C}(-,E')\rightarrow I_{\mathcal B_i}(-,X)\rightarrow 0$ with
     $E'=\coprod_{j=1}^r E'_j\in\mathcal{B}_i$, which has a kernel that is a finite sum of copies of $\coprod\mathcal{C}(-,E'')$ with $E''\in\mathcal{B}_{i-1}$. Moreover, it can be covered by a projective module $\mathcal{C}(-,E_{i-1})$, $E_{i-1}\in\mathcal{B}_{i-1}$. As a result,  we have   an exact sequence
     
    \[
    \mathcal{C}(-,E''_{i-1})\rightarrow \mathcal{C}(-,E'_i)\rightarrow I_{\mathcal B_i}(-,X)\rightarrow 0.
    \]

  Now, we illustrate the above situation for $ I_{\mathcal{B}_4}(-,X)$ with $X=E_7^5$.

\begin{center}
\begin{tikzpicture}
\filldraw[fill=gray!20,draw=gray!5!](1,3)-- (5,3)--(6,2)--(5.5,1.5)--(6,1)--(5,0)--(1,0)--cycle;
\foreach \x in {1,...,7}  
\draw[->] (\x+.1,.1) to (\x+0.4,0.4);
\foreach \x in {1,...,7}  
\draw[->] (\x +.6,0.6) to (\x+.9,.9);
\foreach \x in {1,...,7}  
\draw[->] (\x+.1,1.1) to (\x+0.4,1.4);
\foreach \x in {1,...,7}  
\draw[->] (\x+.6,1.6) to (\x+.9,1.9);
\foreach \x in {1,...,7}  
\draw[->] (\x+.1,2.1) to (\x+0.4,2.4);
\foreach \x in {1,...,7}  
\draw[->] (\x+.6,2.6) to (\x+.9,2.9);

\foreach \x in {1,...,7}  
\draw[->] (\x+.6,0.4) to (\x+.9,.1);
\foreach \x in {1,...,7}  
\draw[->] (\x+.1,.9) to (\x+0.4,0.6);
\foreach \x in {1,...,7}  
\draw[->] (\x+.6,1.4) to (\x+.9,1.1);
\foreach \x in {1,...,7}  
\draw[->] (\x+.1,1.9) to (\x+0.4,1.6);
\foreach \x in {1,...,7}  
\draw[->] (\x+.6,2.4) to (\x+.9,2.1);
\foreach \x in {1,...,7}  
\draw[->] (\x+.1,2.9) to (\x+.4,2.6);

\foreach \x in {1,...,5}  
\draw (\x,3) node{\tiny{$K$}};
\foreach \x in {6,...,8}  
\draw (\x,3) node{\tiny{$0$}};

 \foreach \x in {1,...,5}  
\draw (\x+.5,2.5) node{\tiny{$K$}};
 \foreach \x in {6,7}  
\draw (\x+.5,2.5) node{\tiny{$0$}};

\foreach \x in {1,...,6}  
\draw (\x,2) node{\tiny{$K$}};
 \foreach \x in {7,8}  
\draw (\x,2) node{\tiny{$0$}};

\foreach \x in {1,...,5}  
\draw (\x+.5,1.5) node{\tiny{$K$}};
 \foreach \x in {6,7}  
\draw (\x+.5,1.5) node{\tiny{$0$}};

\foreach \x in {1,...,6}  
\draw (\x,1) node{\tiny{$K$}};
 \foreach \x in {7,8}  
\draw (\x,1) node{\tiny{$0$}};

\foreach \x in {1,...,5}  
\draw (\x+.5,0.5) node{\tiny{$K$}};
 \foreach \x in {6,7}  
\draw (\x+.5,0.5) node{\tiny{$0$}};

\foreach \x in {1,...,5}  
\draw (\x,0) node{\tiny{$K$}};
 \foreach \x in {6,...,8}  
\draw (\x,0) node{\tiny{$0$}};

\draw (4.5,-.5) node{The functor $I_{\mathcal{B}_4}(-,X)$ with $X=E^5_7$.};

\end{tikzpicture}
\end{center}

\begin{center}
\begin{tikzpicture}
\filldraw[fill=gray!20,draw=gray!5!](1,3)-- (5,3)--(6,2)--(5.5,1.5)--(6,1)--(5,0)--(1,0)--cycle;
\filldraw[fill=gray!45,draw=gray!5!](1,3)-- (4,3)--(5.5,1.5)--(4,0)--(1,0)--cycle;
\foreach \x in {1,...,7}  
\draw[->] (\x+.1,.1) to (\x+0.4,0.4);
\foreach \x in {1,...,7}  
\draw[->] (\x +.6,0.6) to (\x+.9,.9);
\foreach \x in {1,...,7}  
\draw[->] (\x+.1,1.1) to (\x+0.4,1.4);
\foreach \x in {1,...,7}  
\draw[->] (\x+.6,1.6) to (\x+.9,1.9);
\foreach \x in {1,...,7}  
\draw[->] (\x+.1,2.1) to (\x+0.4,2.4);
\foreach \x in {1,...,7}  
\draw[->] (\x+.6,2.6) to (\x+.9,2.9);

\foreach \x in {1,...,7}  
\draw[->] (\x+.6,0.4) to (\x+.9,.1);
\foreach \x in {1,...,7}  
\draw[->] (\x+.1,.9) to (\x+0.4,0.6);
\foreach \x in {1,...,7}  
\draw[->] (\x+.6,1.4) to (\x+.9,1.1);
\foreach \x in {1,...,7}  
\draw[->] (\x+.1,1.9) to (\x+0.4,1.6);
\foreach \x in {1,...,7}  
\draw[->] (\x+.6,2.4) to (\x+.9,2.1);
\foreach \x in {1,...,7}  
\draw[->] (\x+.1,2.9) to (\x+.4,2.6);

\foreach \x in {1,...,4}  
\draw (\x,3) node{\tiny{$K^2$}};
\foreach \x in {6,...,8}  
\draw (\x,3) node{\tiny{$0$}};

 \foreach \x in {1,...,4}  
\draw (\x+.5,2.5) node{\tiny{$K^2$}};
 \foreach \x in {6,7}  
\draw (\x+.5,2.5) node{\tiny{$0$}};

\foreach \x in {1,...,5}  
\draw (\x,2) node{\tiny{$K^2$}};
 \foreach \x in {7,8}  
\draw (\x,2) node{\tiny{$0$}};

\foreach \x in {1,...,5}  
\draw (\x+.5,1.5) node{\tiny{$K^2$}};
 \foreach \x in {6,7}  
\draw (\x+.5,1.5) node{\tiny{$0$}};

\foreach \x in {1,...,5}  
\draw (\x,1) node{\tiny{$K^2$}};
 \foreach \x in {7,8}  
\draw (\x,1) node{\tiny{$0$}};

\foreach \x in {1,...,4}  
\draw (\x+.5,0.5) node{\tiny{$K^2$}};
 \foreach \x in {6,7}  
\draw (\x+.5,0.5) node{\tiny{$0$}};

\foreach \x in {1,...,4}  
\draw (\x,0) node{\tiny{$K^2$}};

\draw (5,3) node{\tiny{$K$}};
 \draw (5,0) node{\tiny{$K$}};
  \draw (5.5,2.5) node{\tiny{$K$}};
   \draw (5.5,0.5) node{\tiny{$K$}};
    \draw (6,2) node{\tiny{$K$}};
     \draw (6,1) node{\tiny{$K$}};

\draw (4.5,-.5) node{$I_{\mathcal{B}_4}(-,X)$ with $X=E^5_7$ can be covered by $\mathcal{C}(-,E^4_5)\coprod \mathcal{C}(-, E^4_6).$};

\end{tikzpicture}
\end{center}

\begin{center}
\begin{tikzpicture}

\filldraw[fill=gray!45,draw=gray!5!](1,3)-- (4,3)--(5.5,1.5)--(4,0)--(1,0)--cycle;
\foreach \x in {1,...,7}  
\draw[->] (\x+.1,.1) to (\x+0.4,0.4);
\foreach \x in {1,...,7}  
\draw[->] (\x +.6,0.6) to (\x+.9,.9);
\foreach \x in {1,...,7}  
\draw[->] (\x+.1,1.1) to (\x+0.4,1.4);
\foreach \x in {1,...,7}  
\draw[->] (\x+.6,1.6) to (\x+.9,1.9);
\foreach \x in {1,...,7}  
\draw[->] (\x+.1,2.1) to (\x+0.4,2.4);
\foreach \x in {1,...,7}  
\draw[->] (\x+.6,2.6) to (\x+.9,2.9);

\foreach \x in {1,...,7}  
\draw[->] (\x+.6,0.4) to (\x+.9,.1);
\foreach \x in {1,...,7}  
\draw[->] (\x+.1,.9) to (\x+0.4,0.6);
\foreach \x in {1,...,7}  
\draw[->] (\x+.6,1.4) to (\x+.9,1.1);
\foreach \x in {1,...,7}  
\draw[->] (\x+.1,1.9) to (\x+0.4,1.6);
\foreach \x in {1,...,7}  
\draw[->] (\x+.6,2.4) to (\x+.9,2.1);
\foreach \x in {1,...,7}  
\draw[->] (\x+.1,2.9) to (\x+.4,2.6);

\foreach \x in {1,...,4}  
\draw (\x,3) node{\tiny{$K$}};
\foreach \x in {5,...,8}  
\draw (\x,3) node{\tiny{$0$}};

 \foreach \x in {1,...,4}  
\draw (\x+.5,2.5) node{\tiny{$K$}};
 \foreach \x in {5,6,7}  
\draw (\x+.5,2.5) node{\tiny{$0$}};

\foreach \x in {1,...,5}  
\draw (\x,2) node{\tiny{$K$}};
 \foreach \x in {6,7,8}  
\draw (\x,2) node{\tiny{$0$}};

\foreach \x in {1,...,5}  
\draw (\x+.5,1.5) node{\tiny{$K$}};
 \foreach \x in {6,7}  
\draw (\x+.5,1.5) node{\tiny{$0$}};

\foreach \x in {1,...,5}  
\draw (\x,1) node{\tiny{$K$}};
 \foreach \x in {6,7,8}  
\draw (\x,1) node{\tiny{$0$}};

\foreach \x in {1,...,4}  
\draw (\x+.5,0.5) node{\tiny{$K$}};
 \foreach \x in {5,6,7}  
\draw (\x+.5,0.5) node{\tiny{$0$}};

\foreach \x in {1,...,4}  
\draw (\x,0) node{\tiny{$K$}};
\foreach \x in {5,6,7,8}  
\draw (\x,0) node{\tiny{$0$}};

\draw (4.5,-.5) node{The kernel of $\mathcal{C}(-,E^4_5\coprod E^4_6)\rightarrow I_{\mathcal{B}_4}(-,X)$  is $\mathcal{C}(-,E^3_4)$.};

\end{tikzpicture}
\end{center}

In this way, we have an exact sequence $\mathcal{C}(-,E^3_4)\rightarrow \mathcal{C}(-,E^4_5\coprod E^4_6)\rightarrow I_{\mathcal{B}_4}(-,X)\rightarrow 0$  with $E_4^3\in\mathcal{B}_3$ and $E^4_5\coprod E^4_6\in\mathcal{B}_4$.
\bigskip

 \textbf{The category $K(\mathbb{Z}A_\infty,\sigma)$}. Let $B_1=\{E^1_j\}_{j\in\mathbb{Z}}$, and $B_i=B_{i-1}\cup\{E^i_j\}_{j\in\mathbb{Z}}$ as they appear in the next picture.

  \begin{center}
\begin{tikzpicture}
\foreach \x in {1,...,7}  
\draw[->] (\x+.1,.1) to (\x+0.4,0.4);
\foreach \x in {1,...,7}  
\draw[->] (\x +.6,0.6) to (\x+.9,.9);
\foreach \x in {1,...,7}  
\draw[->] (\x+.1,1.1) to (\x+0.4,1.4);
\foreach \x in {1,...,7}  
\draw[->] (\x+.6,1.6) to (\x+.9,1.9);
\foreach \x in {1,...,7}  
\draw[->] (\x+.1,2.1) to (\x+0.4,2.4);
\foreach \x in {1,...,7}  
\draw[->] (\x+.6,2.6) to (\x+.9,2.9);

\foreach \x in {1,...,7}  
\draw[->] (\x+.6,0.4) to (\x+.9,.1);
\foreach \x in {1,...,7}  
\draw[->] (\x+.1,.9) to (\x+0.4,0.6);
\foreach \x in {1,...,7}  
\draw[->] (\x+.6,1.4) to (\x+.9,1.1);
\foreach \x in {1,...,7}  
\draw[->] (\x+.1,1.9) to (\x+0.4,1.6);
\foreach \x in {1,...,7}  
\draw[->] (\x+.6,2.4) to (\x+.9,2.1);
\foreach \x in {1,...,7}  
\draw[->] (\x+.1,2.9) to (\x+.4,2.6);

\foreach \x in {-1,...,4}  
\draw (\x+3,3) node{\tiny{$E^1_\x$\ \ }};

\foreach \x in {-2,...,4}  
\draw (\x+3.5,2.5) node{\tiny{$E^2_\x$\ \ }};

\foreach \x in {-2,...,3}  
\draw (\x+4,2) node{\tiny{$E^3_\x$\ \ }};

\foreach \x in {-3,...,3}  
\draw (\x+4.5,1.5) node{\tiny{$E^4_\x$\ \ }};

\foreach \x in {-3,...,2}  
\draw (\x+5,1) node{\tiny{$E^5_\x$\ \ }};

\foreach \x in {-4,...,2}  
\draw (\x+5.5,.5) node{\tiny{$E^6_\x$\ \ }};

\draw (4.5,-.5) node{Labeled vertices in the graph $\mathbb{Z}\times A_{\infty}$};

\end{tikzpicture}
\end{center}
\bigskip

   \textbf{The category $K(\mathbb{Z}D_\infty,\sigma)$}.  Let $B_1=\{E^1_1\}$ and 
   \[
   B_i= \begin{cases}
   B_{i-1}\cup \{E^i_j\}_{1\le j\le 2(i+1)},\  \ \mbox{if\; \ } i \mbox{ is even};\\
B_{i-1}\cup \{E^i_j\}_{1\le j\le 2i-1},\  \ \mbox{if\; \  } i \mbox{ is odd}.  
\end{cases} 
\]

\begin{center}
\begin{tikzpicture}
\foreach \x in {1,...,7}  
\draw[->] (\x+.1,.1) to (\x+0.4,0.4);
\foreach \x in {1,...,7}  
\draw[->] (\x +.6,0.6) to (\x+.9,.9);
\foreach \x in {1,...,7}  
\draw[->] (\x+.1,1.1) to (\x+0.4,1.4);
\foreach \x in {1,...,7}  
\draw[->] (\x+.6,1.6) to (\x+.9,1.9);
\foreach \x in {1,...,7}  
\draw[->] (\x+.1,2.1) to (\x+0.4,2.4);
\foreach \x in {1,...,7}  
\draw[->] (\x+.6,2.6) to (\x+.9,2.9);

\foreach \x in {1,...,7}  
\draw[->] (\x+.6,0.4) to (\x+.9,.1);
\foreach \x in {1,...,7}  
\draw[->] (\x+.1,.9) to (\x+0.4,0.6);
\foreach \x in {1,...,7}  
\draw[->] (\x+.6,1.4) to (\x+.9,1.1);
\foreach \x in {1,...,7}  
\draw[->] (\x+.1,1.9) to (\x+0.4,1.6);
\foreach \x in {1,...,7}  
\draw[->] (\x+.6,2.4) to (\x+.9,2.1);
\foreach \x in {1,...,7}  
\draw[->] (\x+.1,2.9) to (\x+.4,2.6);

 \draw[->] (1.1,2.5) to (1.3,2.5);
 \draw[->] (1.6,2.5) to (1.8,2.5);
   \draw[->] (2.1,2.5) to (2.3,2.5);
    \draw[->] (2.6,2.5) to (2.8,2.5);
 \draw[->] (3.1,2.5) to (3.3,2.5);
   \draw[->] (3.6,2.5) to (3.8,2.5);
   \draw[->] (4.1,2.5) to (4.3,2.5);
 \draw[->] (4.6,2.5) to (4.8,2.5);
   \draw[->] (5.1,2.5) to (5.3,2.5);
           \draw[->] (5.6,2.5) to (5.8,2.5);
               \draw[->] (6.1,2.5) to (6.3,2.5);
                   \draw[->] (6.6,2.5) to (6.8,2.5);
                       \draw[->] (7.1,2.5) to (7.3,2.5);
                           \draw[->] (7.6,2.5) to (7.8,2.5);
                           
  \draw (4.5,2.5) node{\tiny{$E^1_1$}}; 
  
    \draw (4,3) node{\tiny{$E^2_1$}};
      \draw (4,2.5) node{\tiny{$E^2_2$}};
        \draw (4,2) node{\tiny{$E^2_3$}};
          \draw (5,2) node{\tiny{$E^2_4$}};
            \draw (5,2.5) node{\tiny{$E^2_5$}};
              \draw (5,3) node{\tiny{$E^2_6$}};
              
      \draw (3.5,2.5) node{\tiny{$E^3_1$}};
      \draw (3.5,1.5) node{\tiny{$E^3_2$}};
        \draw (4.5,1.5) node{\tiny{$E^3_3$}};
          \draw (5.5,1.5) node{\tiny{$E^3_4$}};
            \draw (5.5,2.5) node{\tiny{$E^3_5$}};

         \draw (3,3) node{\tiny{$E^4_1$}};
          \draw (3,2.5) node{\tiny{$E^4_2$}};
      \draw (3,2) node{\tiny{$E^4_3$}};
        \draw (3,1) node{\tiny{$E^4_4$}};
          \draw (4,1) node{\tiny{$E^4_5$}};
            \draw (5,1) node{\tiny{$E^4_6$}};     
             \draw (6,1) node{\tiny{$E^4_7$}};
             \draw (6,2) node{\tiny{$E^4_8$}};
                \draw (6,2.5) node{\tiny{$E^4_9$}};
              \draw (6,3) node{\tiny{$E^4_{10}$}};

           \draw (2.5,2.5) node{\tiny{$E^5_1$}};
      \draw (2.5,1.5) node{\tiny{$E^5_2$}};
        \draw (2.5,0.5) node{\tiny{$E^5_3$}};
          \draw (3.5,0.5) node{\tiny{$E^5_4$}};
            \draw (4.5,0.5) node{\tiny{$E^5_5$}};    
             \draw (5.5,0.5) node{\tiny{$E^5_6$}};
      \draw (6.5,0.5) node{\tiny{$E^5_7$}};
        \draw (6.5,1.5) node{\tiny{$E^5_8$}};
          \draw (6.5,2.5) node{\tiny{$E^5_9$}};

\draw (4.5,-.5) node{Labeled vertices in the graph $\mathbb{Z}\times D_{\infty}$};

\end{tikzpicture}
\end{center}
\bigskip

 \textbf{The categories $K(\mathbb{N}\tilde{D}_m,\sigma)$, $K(\mathbb{N}\tilde{A}_m,\sigma)$, $K(\mathbb{N}\tilde{E}_6,\sigma)$, $K(\mathbb{N}\tilde{E}_7,\sigma)$ and $K(\mathbb{N}\tilde{E}_8,\sigma)$}. 
  
 We start with  $K(\mathbb{N}\tilde{D}_m,\sigma)$. First we label the vertices of $\tilde{D}_n$  as
 $(\tilde{D}_n)_0=\{1,2,\ldots, n-1,n-1,n,n+1\}$. Let $A=\{x\in (\tilde{D}_n)_0: x \textrm{  is a source vertex}\}$ and
   $B=\{x\in (\tilde{D}_n)_0: x \textrm{  is a sink vertex}\}$. Now we  label the indecomposable objects of  $K(\mathbb{N}\tilde{D}_m,\sigma)$; first we  label the vertices of $\{1\}\times \tilde{D}_m=\{E^1_j\}_{j\in A}\cup \{E^2_j\}_{j\in B}$. For $i\ge 3$, let $E_{j}^{i}=\tau^{-1}(E^{i-2}_j)$ and define  $B_1=\{E^1_j\}_{j\in A}$, $B_2=B_1\cup \{E^2_j\}_{j\in B}$. For $i\ge 3$,
  
  \[
B_i= \begin{cases} B_{i-1}\cup \{E^i_j\}_{j\in A},\ \ \mbox{if $i$ is odd}; \\
B_{i-1}\cup \{E^i_j\}_{j\in B},\ \ \mbox{if $i$ is even} .
 \end{cases} 
\]

 \begin{center}
\begin{tikzpicture}
\foreach \x in {1,...,7}  
\draw[->] (\x+.1,.1) to (\x+0.4,0.4);
\foreach \x in {1,...,7}  
\draw[->] (\x +.6,0.6) to (\x+.9,.9);
\foreach \x in {1,...,7}  
\draw[->] (\x+.1,1.1) to (\x+0.4,1.4);
\foreach \x in {1,...,7}  
\draw[->] (\x+.6,1.6) to (\x+.9,1.9);
\foreach \x in {1,...,7}  
\draw[->] (\x+.1,2.1) to (\x+0.4,2.4);
\foreach \x in {1,...,7}  
\draw[->] (\x+.6,2.6) to (\x+.9,2.9);

\foreach \x in {1,...,7}  
\draw[->] (\x+.6,0.4) to (\x+.9,.1);
\foreach \x in {1,...,7}  
\draw[->] (\x+.1,.9) to (\x+0.4,0.6);
\foreach \x in {1,...,7}  
\draw[->] (\x+.6,1.4) to (\x+.9,1.1);
\foreach \x in {1,...,7}  
\draw[->] (\x+.1,1.9) to (\x+0.4,1.6);
\foreach \x in {1,...,7}  
\draw[->] (\x+.6,2.4) to (\x+.9,2.1);
\foreach \x in {1,...,7}  
\draw[->] (\x+.1,2.9) to (\x+.4,2.6);

\foreach \x in {1,...,7}  
\draw (\x+.5,1.5) node{\tiny{$\cdots$}};

 \draw[->] (1.1,2.5) to (1.3,2.5);
 \draw[->] (1.6,2.5) to (1.8,2.5);
   \draw[->] (2.1,2.5) to (2.3,2.5);
    \draw[->] (2.6,2.5) to (2.8,2.5);
 \draw[->] (3.1,2.5) to (3.3,2.5);
   \draw[->] (3.6,2.5) to (3.8,2.5);
   \draw[->] (4.1,2.5) to (4.3,2.5);
 \draw[->] (4.6,2.5) to (4.8,2.5);
   \draw[->] (5.1,2.5) to (5.3,2.5);
           \draw[->] (5.6,2.5) to (5.8,2.5);
               \draw[->] (6.1,2.5) to (6.3,2.5);
                   \draw[->] (6.6,2.5) to (6.8,2.5);
                       \draw[->] (7.1,2.5) to (7.3,2.5);
                           \draw[->] (7.6,2.5) to (7.8,2.5);

  \draw[->] (1.1,0.5) to (1.3,0.5);
 \draw[->] (1.6,0.5) to (1.8,0.5);
   \draw[->] (2.1,0.5) to (2.3,0.5);
    \draw[->] (2.6,0.5) to (2.8,0.5);
 \draw[->] (3.1,0.5) to (3.3,0.5);
   \draw[->] (3.6,0.5) to (3.8,0.5);
   \draw[->] (4.1,0.5) to (4.3,0.5);
 \draw[->] (4.6,0.5) to (4.8,0.5);
   \draw[->] (5.1,0.5) to (5.3,0.5);
           \draw[->] (5.6,0.5) to (5.8,0.5);
               \draw[->] (6.1,0.5) to (6.3,0.5);
                   \draw[->] (6.6,0.5) to (6.8,0.5);
                       \draw[->] (7.1,0.5) to (7.3,0.5);
                           \draw[->] (7.6,0.5) to (7.8,0.5);

              \draw (1,3) node{\tiny{$E^1_1$}};
              \draw (1,2.5) node{\tiny{$E^1_2$}};                                     
              \draw (1,2) node{\tiny{$E^1_4$}};
              \draw (1,1) node{\tiny{\ \ $E^1_{n- 2}$}};
              \draw (1,0.5) node{\tiny{$E^1_n$}};
              \draw (1,0) node{\tiny{$E^1_{n+1}$}};
              
               \draw (1.5,2.5) node{\tiny{$E^2_3$}};
              \draw (1.5,0.5) node{\tiny{$\ \ \ E^2_{n-1}$}};

                \draw (2,3) node{\tiny{$E^3_1$}};
              \draw (2,2.5) node{\tiny{$E^3_2$}};                                     
              \draw (2,2) node{\tiny{$E^3_4$}};
              \draw (2,1) node{\tiny{\ \ $E^3_{n- 2}$}};
              \draw (2,0.5) node{\tiny{$\ \ E^3_n$}};
              \draw (2,0) node{\tiny{$\ \  E^3_{n+1}$}};
              
               \draw (2.5,2.5) node{\tiny{$E^4_3$}};
              \draw (2.5,0.5) node{\tiny{$\ \ \ \  E^4_{n-1}$}};

              \draw (4.5,-.5) node{Labeled vertices in the graph $\mathbb{N}\times \tilde{D}_m$};

                           \end{tikzpicture}
\end{center}

 Analogously, we   can give a  filtration for the rest of the categories: $K(\mathbb{N}\tilde{A}_m,\sigma)$, $K(\mathbb{N}\tilde{E}_6,\sigma)$, $K(\mathbb{N}\tilde{E}_7,\sigma)$ and $K(\mathbb{N}\tilde{E}_8,\sigma)$ into subcategories. For example, for  $K(\mathbb{N}\tilde{E}_6,\sigma)$,  let $(\tilde{E}_6)_0=\{1,2,...,7\}$, and let $A=\{1,3,5,7\}$, $B=\{2,4,6\}$. Define   $B_1=\{E^1_1, E^1_3, E^1_5, E^1_7\}$, $B_2=B_1\cup \{E^2_2, E^2_4, E^2_6\},$
  $B_i= B_{i-1}\cup \{E^i_1,  E^i_3, E^i_5, E^i_7\}$, if $i$ is odd, and $B_i= B_{i-1}\cup \{E^i_2, E^i_4, E^i_6\}$, if $i$ is even.


\begin{center}
\begin{tikzpicture}
\foreach \x in {0.5,2,3.5,5}  
\draw[->] (\x+.1,.1) to (\x+0.4,0.4);

 \foreach \x in {1,2.5,4}  
\draw[->] (\x+.3,.3) to (\x+0.9,0.1);

 \foreach \x in {0.5,2,3.5,5}  
\draw[->] (\x+.2,1.3) to (\x+0.4,0.4);

\foreach \x in {1,2.5,4}  
\draw[->] (\x+0.2,.7) to (\x+0.9,1.4);

\foreach \x in {0.5,2,3.5,5}  
\draw[->,thick] (\x+.1,1.6) to (\x+0.4,1.9);

 \foreach \x in {1,2.5,4}  
\draw[->,thick] (\x+.2,1.8) to (\x+0.9,1.6);

\foreach \x in {0.5,2,3.5,5}  
\draw[->,thin, dashed] (\x+.1,1.6) to (\x+0.4,2.4);

\foreach \x in {0,1.5,3,4.5}  
\draw[->,thick] (\x+.1,2.4) to (\x+0.9,2.1);

\foreach \x in {1,2.5,4}  
\draw[->,thick] (\x+.2,2) to (\x+0.4,2.4);

\foreach \x in {0.5,2,3.5,5}  
\draw[->] (\x+.1,2.9) to (\x+0.4,2.6);

\foreach \x in {1,2.5,4}  
\draw[->] (\x+0.1,2.6) to (\x+0.9,2.9);

\foreach \x in {1,2.5,4}  
\draw[->, thin, dashed] (\x+.1,2.4) to (\x+0.9,1.4);

              \draw (0.5,3) node{\tiny{$ E^1_1$}};   
                             \draw (0,2.5) node{\tiny{$ E^1_7\ \ $}};   
                                            \draw (0.5,1.5) node{\tiny{$ E^1_3\ \  $}};   
                                                           \draw (0.5,0) node{\tiny{$ E^1_5$}};

              \draw (1,2.5) node{\tiny{$ E^2_2$}};                                                              
                            \draw (1,2) node{\tiny{$ E^2_6$}};   
                                          \draw (1,0.5) node{\tiny{$ \ \ E^2_4$}};

              \draw (2,3) node{\tiny{$ E^3_1$}};   
                             \draw (1.5,2.5) node{\tiny{$ E^3_7\ \ $}};   
                                            \draw (2,1.5) node{\tiny{$\   E^3_3\   $}};   
                                                           \draw (2,0) node{\tiny{$ E^3_5$}};

              \draw (2.5,2.5) node{\tiny{$ E^2_2$}};                                                              
                            \draw (2.5,2) node{\tiny{$ E^2_6$}};   
                                          \draw (2.5,0.5) node{\tiny{$ \ \ E^2_4$}};

          \draw (6,0.5) node{\tiny{$\cdots$}};                                                                                                                                                                                                                      
           \draw (6,1.5) node{\tiny{$\cdots$}};                                                                                                                                                                                                                      
            \draw (6,2) node{\tiny{$\cdots$}};                                                                                                                                                                                                                      
             \draw (6,3) node{\tiny{$\cdots$}};                                                                                                                                                                                                                      
                     \draw (4.5,-.5) node{Labeled vertices in the graph $\mathbb{N}\times \tilde{E}_6$};         
 \end{tikzpicture}
  \end{center}

   \subsection{A tilting subcategory in $\mathcal{C}=K(\mathbb{Z}A_\infty,\sigma)$}

According with to filtration given above, although  it is not finite,  we could try to find tilting category $\mathcal{T}\subset\mathrm{Mod}(\mathcal C)$  in the category $\mathcal{F}(\Delta)\cap\mathcal{F}(\nabla)$ (see \cite{Rin2}). In this part, we found a tilting category in $\mathrm{Mod}(\mathcal{C})$
for this special case. First, we label the vertices of $\mathbb{Z}A_{\infty}$ as follows.

\begin{center}
\begin{tikzpicture}
\foreach \x in {1,...,7}  
\draw[->] (\x+.1,.1) to (\x+0.4,0.4);
\foreach \x in {1,...,7}  
\draw[->] (\x +.6,0.6) to (\x+.9,.9);
\foreach \x in {1,...,7}  
\draw[->] (\x+.1,1.1) to (\x+0.4,1.4);
\foreach \x in {1,...,7}  
\draw[->] (\x+.6,1.6) to (\x+.9,1.9);
\foreach \x in {1,...,7}  
\draw[->] (\x+.1,2.1) to (\x+0.4,2.4);
\foreach \x in {1,...,7}  
\draw[->] (\x+.6,2.6) to (\x+.9,2.9);

\foreach \x in {1,...,7}  
\draw[->] (\x+.6,0.4) to (\x+.9,.1);
\foreach \x in {1,...,7}  
\draw[->] (\x+.1,.9) to (\x+0.4,0.6);
\foreach \x in {1,...,7}  
\draw[->] (\x+.6,1.4) to (\x+.9,1.1);
\foreach \x in {1,...,7}  
\draw[->] (\x+.1,1.9) to (\x+0.4,1.6);
\foreach \x in {1,...,7}  
\draw[->] (\x+.6,2.4) to (\x+.9,2.1);
\foreach \x in {1,...,7}  
\draw[->] (\x+.1,2.9) to (\x+.4,2.6);

\foreach \x in {-1,...,4}  
\draw (\x+3,3) node{\tiny{(1,\x)}};

\foreach \x in {-2,...,4}  
\draw (\x+3.5,2.5) node{\tiny{(2,\x)}};

\foreach \x in {-2,...,3}  
\draw (\x+4,2) node{\tiny{(3,\x)}};

\foreach \x in {-3,...,3}  
\draw (\x+4.5,1.5) node{\tiny{(4,\x)}};

\foreach \x in {-3,...,2}  
\draw (\x+5,1) node{\tiny{(5,\x)}};

\foreach \x in {-4,...,2}  
\draw (\x+5.5,.5) node{\tiny{(6,\x)}};

\draw (4.5,-.5) node{Labeled vertices in the graph $\mathbb{Z}\times A_{\infty}$};

\end{tikzpicture}
\end{center}
  
  In this way, we can indentify the representations assigning a $K$-vector  space $V_{ij}$ to each vertice $(i,j)\in\mathbb{N}\times\mathbb{Z}$.
 
Let $r\in\mathbb{Z}$ and consider the  representation $T(r,1)=\{V_{ij}\}$ defined as 
\[
V_{ij}= \begin{cases} K,\ \ \mbox{if } i\ge r \mbox{ and } -(i-r-1)\le j\le 1; \\
0 \ \ \mbox{ in other case}, \end{cases} 
\]
and a map between two adjacent $K$-vectorial spaces $V_{rs}$ and $V_{uv}$ is $1_K$ if $V_{rs}=V_{uv}=K$ and $0$ in other case.
\bigskip

In the same way,  for any $s\in\mathbb{Z}$ we can define the \emph{moved}  representations of  $T(r,1)$ as follows.  We define $T(r,s)=\{V_{ij}\}$  as
\[
V_{ij}= \begin{cases} K,\  \ \mbox{if } i\ge r \mbox{ and } -(i-r-s)\le j\le s; \\
0\ \ \mbox{ in other case}.  \end{cases} 
\]

\begin{lemma}
For all pairs $(r,s)\in\mathbb{N}\times\mathbb{Z}$ the representation $T(r,s)$ lies in $\mathcal{F}(\Delta)$. 
\end{lemma}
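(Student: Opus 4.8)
The plan is to compute the trace filtration of $T(r,s)$ with respect to $\{\mathcal{B}_i\}_{i\ge 0}$ explicitly and to check that each successive quotient lies in $\mathrm{add}\,\Delta(i)$; in fact I expect this filtration to collapse to a single step. First I would observe that, by construction, $T(r,s)(E)=0$ for every indecomposable object $E$ of $\mathcal{C}$ lying in a row $i<r$ of $\mathbb{Z}A_\infty$, since the support of $T(r,s)$ is contained in the rows $\ge r$. By Yoneda this means that every morphism $\mathcal{C}(-,E)\to T(r,s)$ with $E\in\mathcal{B}_{r-1}$ vanishes, so $T(r,s)^{(j)}=0$ for all $j<r$.

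Next I would prove that $T(r,s)$ is generated as a $\mathcal{C}$-module by the single vertex $(r,s)$. The support of $T(r,s)$ is precisely the forward cone $\{(i,j):i\ge r,\ s+r-i\le j\le s\}$, all of whose connecting maps are isomorphisms; hence for every vertex $(p,q)$ of this cone there is a path $(p,q)\to(r,s)$ lying inside the cone which is nonzero in $K(\mathbb{Z}A_\infty,\sigma)$ and acts as the identity on $T(r,s)$. Therefore the natural transformation $\mathcal{C}(-,(r,s))\to T(r,s)$ attached via Yoneda to a generator of $T(r,s)((r,s))\cong K$ is an epimorphism. Since $(r,s)\in\mathcal{B}_r$, this forces $T(r,s)^{(r)}=T(r,s)$, and consequently $T(r,s)^{(j)}=T(r,s)$ for all $j\ge r$. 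Thus the trace filtration of $T(r,s)$ is $0=T(r,s)^{(r-1)}\subsetneq T(r,s)^{(r)}=T(r,s)$, and the only nonzero factor is $T(r,s)^{(r)}/T(r,s)^{(r-1)}=T(r,s)$ itself.

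It remains to show that this factor belongs to $\Delta(r)$, that is, that $T(r,s)\cong\mathcal{C}(-,(r,s))/I_{\mathcal{B}_{r-1}}(-,(r,s))=\Delta_{(r,s)}(r)$. Equivalently, the kernel of the epimorphism $\mathcal{C}(-,(r,s))\to T(r,s)$ above must be identified with $I_{\mathcal{B}_{r-1}}(-,(r,s))$. The inclusion $I_{\mathcal{B}_{r-1}}(-,(r,s))\subseteq\ker$ is automatic, since any morphism into $(r,s)$ factoring through $\mathcal{B}_{r-1}$ passes through a vertex $E$ with $T(r,s)(E)=0$. For the reverse inclusion I would use that each $\mathcal{C}(X,(r,s))$ is at most one-dimensional and analyse the two arrows entering $(r,s)$ in $\mathbb{Z}A_\infty$ — one issuing from row $r-1$, one from row $r+1$ — together with the mesh relation at the boundary row, to conclude that a nonzero morphism $X\to(r,s)$ which does not factor through the row-$(r-1)$ branch must factor through the row-$(r+1)$ branch and, by descending induction inside the cone, stays nonzero on $T(r,s)$. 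This final identification $T(r,s)\cong\Delta_{(r,s)}(r)$ — the careful bookkeeping of morphisms in $K(\mathbb{Z}A_\infty,\sigma)$ near the boundary — is the step I expect to be the main obstacle; granting it, $T(r,s)$ carries a one-step $\Delta$-filtration, hence lies in $\mathcal{F}(\Delta)$, for every $(r,s)\in\mathbb{N}\times\mathbb{Z}$.
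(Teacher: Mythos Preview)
Your argument breaks down in the second paragraph: the claim that for every vertex $(p,q)$ in the support of $T(r,s)$ there is a nonzero path $(p,q)\to(r,s)$ in $K(\mathbb{Z}A_\infty,\sigma)$ is false. The arrows in $\mathbb{Z}A_\infty$ go $(i,j)\to(i-1,j+1)$ and $(i,j)\to(i+1,j)$, so along any path the second coordinate never decreases. In particular the vertex $(r+1,s)$ lies in the support of $T(r,s)$, but every path out of $(r+1,s)$ lands at vertices with second coordinate $\ge s$ and row $\neq r$ unless the row increases; since the only arrows into $(r,s)$ come from $(r+1,s-1)$ and $(r-1,s)$, there is no path $(r+1,s)\to(r,s)$ at all. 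Hence $\mathcal{C}(-,(r,s))$ does \emph{not} surject onto $T(r,s)$, the trace filtration does \emph{not} collapse to a single step, and $T(r,s)\not\cong\Delta_{(r,s)}(r)$. Geometrically, the support of the representable $\mathcal{C}(-,(r,s))$ is a diagonal strip of width~$r$ going down--left, whereas the support of $T(r,s)$ is the triangular cone opening downwards from $(r,s)$; these regions have little overlap.

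The paper's proof is essentially the opposite picture. It realises $T(1,s)$ as the directed union $\varinjlim_i\mathcal{C}(-,E^i_s)$ of representables along the right edge of the triangle, so that $T(1,s)^{(i)}=\mathcal{C}(-,E^i_s)$ for every $i\ge1$: the trace filtration is genuinely infinite. The key computation is that every morphism into $E^i_s$ factoring through $\mathcal{B}_{i-1}$ already factors through the single arrow $E^{i-1}_s\to E^i_s$, giving $I_{\mathcal{B}_{i-1}}(-,E^i_s)\cong\mathcal{C}(-,E^{i-1}_s)$ and hence $T(1,s)^{(i)}/T(1,s)^{(i-1)}\cong\Delta_{E^i_s}(i)$. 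For $r>1$ one uses the exact sequence $0\to\mathcal{C}(-,E^{r-1}_s)\to T(1,s)\to T(r,s)\to 0$ to obtain the analogous filtration starting at level $r$. You should redo the argument along these lines.
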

\begin{proof}
Indeed for simplicity we only  consider the representation  $T(1,1)$.  First, we observe that 
$T(1,1)^{(i)}=\mathcal{C}(-,E^i_1)$.  On the other hand, let $X$ be an idecomposable object. By the mesh relations then each path in $I_{\mathcal{B}_{i-1}}(X,E^i_1)$ can be written in a unique way as $X\rightarrow  E^{i-1}_1\xrightarrow{\alpha} E^i_1$ where $\alpha$ is the unique arrow from 
$E^{i-1}_1$ to $ E^i_1$;  therefore $I_{\mathcal{B}_{i-1}}(-,E^i_1)\cong \mathcal{C}(-,E^{i-1}_1)$. It follows that we have a chain
\[
0=T(1,1)^{(0)}\subset T(1,1)^{(1)}\subset T(1,1)^{(2)}\subset \cdots,
\]
such that $T(1,1)^{(i)}/T(1,1)^{(i-1)}\cong \mathcal{C}(-,E^i_1)/\mathcal{C}(-,E^{i-1}_1)\cong \mathcal{C}(-,E^i_1)/I_{\mathcal{B}_{i-1}}(-,E^i_1)\in\Delta(i)$.
Therefore $T(1,1)\in\mathcal{F}(\Delta)$.
\end{proof}

\begin{lemma}\label{tilting1}
\begin{itemize}
\item[(i)] $T(1,s)$ is a projective $\mathcal C$-module for all $s\in\mathbb{Z}$.
\item[(ii)] For all $(r,s)\in\mathbb{N}\times\mathbb{Z}$, there is an exact sequence
\begin{eqnarray}\label{triangle1}
0\rightarrow \mathcal{C}(-, E^{r-1}_s)\xrightarrow{j} T(1,s)\xrightarrow{p} T(r,s)\rightarrow 0.
\end{eqnarray}
\item[(iii)] Let $(r,s),(r',s')\in\mathbb{N}\times\mathbb{Z}$. Then $\mathrm{Hom}(\mathcal{C}(-,E^r_s),T(r',s'))=0$ if  $s\neq s'$ or 
$s=s'$ and $r<r'$.
\item[(iv)]  For all  $(r,s)\in\mathbb{N}\times\mathbb{Z}$, the  projective $\mathcal C$-module $\mathcal{C}(-,E^r_s)$ has a resolution
\begin{eqnarray*}
0\rightarrow \mathcal{C}(-, E^{r}_s)\rightarrow T(1,s)\rightarrow T(r+1,s)\rightarrow 0.
\end{eqnarray*}
\end{itemize}

\end{lemma}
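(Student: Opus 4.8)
The four statements of Lemma \ref{tilting1} concern explicit representations of $\mathcal{C}=K(\mathbb{Z}A_\infty,\sigma)$, so the whole proof is a matter of computing with meshes and with $\mathrm{Hom}$-spaces in a mesh category, whose dimensions are governed by paths modulo mesh relations. The plan is to treat the four parts in order, each feeding the next.

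\textbf{Part (i).} I would identify $T(1,s)$ with a representable functor directly. By the description of $T(1,s)=\{V_{ij}\}$, the support is the set of vertices $(i,j)$ with $i\ge 1$ and $-(i-1-s)\le j\le s$, which is exactly the ``forward cone'' emanating from the vertex $E^1_s=(1,s)$ in $\mathbb{Z}A_\infty$, i.e.\ the set of $Y$ such that there is a (unique, up to mesh relations) path $E^1_s\to Y$. Using the mesh relations one checks that $\dim_K \mathcal{C}(E^1_s, Y)=1$ for every vertex $Y$ in this cone and $=0$ otherwise, and that the transition maps of $\mathcal{C}(E^1_s,-)$ along arrows are exactly the identities/zeros prescribed in the definition of $T(1,s)$. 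Hence $T(1,s)\cong \mathcal{C}(-,E^1_s)$ as covariant representations (equivalently the contravariant $\mathcal{C}(E^1_s,-)$ under the convention of this section), so $T(1,s)$ is projective.

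\textbf{Parts (ii) and (iv).} These are really the same computation. For $(r,s)\in\mathbb{N}\times\mathbb{Z}$, the representation $T(r,s)$ is the ``truncated cone'' supported on $\{(i,j): i\ge r,\ -(i-r-s)\le j\le s\}$; comparing with the full cone $T(1,s)$ supported from height $1$, the complement of the support is again a cone, namely the forward cone of the vertex $E^{r-1}_s=(r-1,s)$. So I would define $j:\mathcal{C}(-,E^{r-1}_s)\to T(1,s)$ as the morphism corresponding under Yoneda to the distinguished element of $T(1,s)(E^{r-1}_s)=K$ (the unique path $E^1_s\to E^{r-1}_s$), and $p:T(1,s)\to T(r,s)$ as the obvious quotient killing the rows of height $<r$. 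One then checks exactness objectwise: at each vertex all three spaces are $0$ or $K$, and $j$ is injective exactly on the sub-cone, $p$ surjective with kernel that sub-cone. This gives \eqref{triangle1}. For (iv) one runs the identical argument one step lower: the sub-cone complementary to the support of $T(r+1,s)$ inside $T(1,s)$ is the forward cone of $E^r_s=(r,s)$, i.e.\ $\mathcal{C}(-,E^r_s)$ (using (i) applied with base vertex $E^r_s$, whose cone really is representable by the same mesh argument), yielding $0\to\mathcal{C}(-,E^r_s)\to T(1,s)\to T(r+1,s)\to 0$.

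\textbf{Part (iii).} Apply $\mathrm{Hom}(\mathcal{C}(-,E^r_s),-)$: by Yoneda this is just evaluation, $\mathrm{Hom}(\mathcal{C}(-,E^r_s),T(r',s'))\cong T(r',s')(E^r_s)$. By the explicit formula for $T(r',s')$, the vertex $(r,s)$ lies in its support iff $r\ge r'$ and $-(r-r'-s')\le s\le s'$; the latter forces $s\le s'$ and $s\ge -(r-r')+s'\ge s'$ when... — more cleanly, one sees from the shape of the cone that its only row reaching the ``column value'' $s$ with $s$ maximal is row $s'$, so membership requires $s=s'$ together with $r\ge r'$. Hence the Hom vanishes precisely when $s\ne s'$, or when $s=s'$ and $r<r'$, as claimed. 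I expect the main obstacle to be nothing deep but rather bookkeeping: setting up consistent coordinates on $\mathbb{Z}A_\infty$ so that ``the forward cone of a vertex is representable'' and ``the support of $T(r,s)$ is such a cone, truncated'' are both transparent, and double-checking the inequalities defining the supports so that the three objectwise cases in (ii)/(iv) line up and the evaluation in (iii) gives exactly the stated vanishing pattern. Once the mesh-relation fact $\dim_K\mathcal{C}(X,Y)\le 1$ on these components (with value $1$ on a cone) is in hand, every step is a finite diagram chase at each vertex.
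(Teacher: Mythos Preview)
Your argument for (i) contains a genuine error: $T(1,s)$ is \emph{not} representable. You claim that $\dim_K\mathcal{C}(E^1_s,Y)=1$ for every $Y$ in the downward cone from $E^1_s$, but the mesh relations kill most of these paths. Concretely, the mesh at the top-row vertex $(1,s{+}1)$ forces the composite $(1,s)\to(2,s)\to(1,s{+}1)$ to be zero, and propagating this one finds that $\mathcal{C}((1,s),-)$ is supported only on the single column $\{(i,s):i\ge1\}$, while $\mathcal{C}(-,(1,s))$ is supported on a single diagonal. Neither is the triangle $T(1,s)$. The paper's proof is genuinely different: it realizes $T(1,s)$ as the directed union $\varinjlim_i\,\mathcal{C}(-,E^i_s)$ (the representables $\mathcal{C}(-,E^i_s)$ are strips of width $i$, and their union is the full triangle), concludes flatness, and then uses the duality $D$ on locally finite modules to upgrade flat to projective.

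This error also infects your (ii) and (iv), where you describe the kernel as ``the forward cone of $E^{r-1}_s$'' and invoke the same mesh argument. The kernel \emph{is} $\mathcal{C}(-,E^{r-1}_s)$, but its support is the width-$(r{-}1)$ strip (exactly the set-theoretic difference of the two triangles, as in the paper's picture for $\mathcal{C}(-,E^2_1)$), not a cone; so you need the correct computation of representable Hom-dimensions in the mesh category to justify it. Your (iii) via Yoneda is the right idea and matches the paper's one-line ``follows from the mesh relations,'' though your support analysis (``membership requires $s=s'$'') is not literally correct for arbitrary $(r,s)$ either: the vertex $(r,s)$ can lie in the support of $T(r',s')$ with $s<s'$ provided $r$ is large enough.
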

\begin{proof}
To prove (i),  consider the directed system of projective $\mathcal{C}$-modules: $\mathcal{C}(-,E^1_s)\subset \mathcal{C}(-,E^2_s)\subset\cdots$. Clearly, it follows that
$T(1,s)=  \displaystyle\lim_{ \rightarrow } \mathcal{C}(-,E^i_s) $. Hence, $T(1,s)$ is a flat $\mathcal{C}$-module because it is a limite of projective
$\mathcal{C}$-modules. Let be $0\rightarrow A\rightarrow B\rightarrow C \rightarrow  0$ an exact sequence of $\mathcal{C}$-modules. As a resultado, we have an exact sequence of $\mathcal{C}$-modules:
\[
0\rightarrow T\otimes_K A\rightarrow T\otimes_K B\rightarrow T\otimes_K C\rightarrow  0
\]
 
 If we denote $D=\mathrm{Hom}(-,K):L.F(\mathcal{C})\rightarrow L.F(\mathcal{C}^{op})$, $D(M)(X)=\mathrm{Hom}(M(X),K)$, the usual duality between the subcategories of locally finite $\mathcal{C}$-modules.
 \[
 \begin{diagram}
 \node{0}\arrow{e,t}{}
  \node{\mathcal{C}(C,D(T))}\arrow{e,t}{}\arrow{s,l}{\cong}
   \node{\mathcal{C}(B,D(T))}\arrow{e,t}{}\arrow{s,l}{\cong}
   \node{\mathcal{C}(A,D(T))}\arrow{e,t,!}{}\arrow{s,l}{\cong}\\
    \node{0}\arrow{e,t}{}
  \node{D(T\otimes_K C)}\arrow{e,t}{}
   \node{D(T\otimes_K B)}\arrow{e,t}{}
   \node{D(T\otimes_K A)}\arrow{e,t,}{}
   \node{0}
 \end{diagram}
\]
it follows that the exact sequence on the top is a short exact sequence. Thus,  $D(T)$ is injective and therefore $T$ is projective.
(ii)  is clear, and it follows from  the definition of $T(r,s)$.
(iii) follows from the mesh relations in $\mathcal{C}$, finally  (iv)  follows straightforward from the definition.
\end{proof}

\begin{theorem}
The full subcategory $\mathcal{T}$ of $\mathrm{Mod}( \mathcal{C})$  consisting of the family of  $\mathcal{C}$-modules $\{T(r,s) \}_{(r,s)\in\mathbb{N\times Z}}$ is a tilting category.
\end{theorem}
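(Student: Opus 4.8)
The plan is to verify the defining axioms of a tilting subcategory of $\mathrm{Mod}(\mathcal{C})$ (following \cite{Rin2}) for $\mathcal{T}=\mathrm{Add}\{T(r,s)\}_{(r,s)\in\mathbb{N}\times\mathbb{Z}}$, the closure of the given family under summands and the coproducts used below: \textbf{(T1)} every object of $\mathcal{T}$ has projective dimension at most $1$; \textbf{(T2)} $\mathrm{Ext}^{k}_{\mathcal{C}}(T,T')=0$ for all $k\ge 1$ and all $T,T'\in\mathcal{T}$; \textbf{(T3)} every projective $\mathcal{C}$-module admits a finite coresolution by objects of $\mathcal{T}$. Axioms (T1) and (T3) follow quickly from Lemma~\ref{tilting1}. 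Indeed, part (i) gives $\mathrm{pd}\,T(1,s)=0$, and reading part (ii) for $r\ge 2$ as a short exact sequence $0\to\mathcal{C}(-,E^{r-1}_{s})\to T(1,s)\to T(r,s)\to 0$ whose two left terms are projective shows $\mathrm{pd}\,T(r,s)\le 1$; hence (T1) holds on all of $\mathcal{T}$, and in particular $\mathrm{Ext}^{k}(T,-)=0$ for $k\ge 2$, so that (T2) reduces to the vanishing of $\mathrm{Ext}^{1}$. For (T3), every indecomposable projective of $\mathrm{Mod}(\mathcal{C})$ is a representable $\mathcal{C}(-,E^{r}_{s})$ because $\mathcal{C}$ is Krull--Schmidt, and Lemma~\ref{tilting1}(iv) exhibits the exact sequence $0\to\mathcal{C}(-,E^{r}_{s})\to T(1,s)\to T(r+1,s)\to 0$ with $T(1,s),T(r+1,s)\in\mathcal{T}$; since $\mathrm{Mod}(\mathcal{C})$ has exact coproducts, coproducts of these sequences yield a $\mathcal{T}$-coresolution of length one of an arbitrary projective.

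The core of the argument is (T2), that is $\mathrm{Ext}^{1}_{\mathcal{C}}(T(r,s),T(r',s'))=0$ for all indices. I would apply $\mathrm{Hom}_{\mathcal{C}}(-,T(r',s'))$ to the projective presentation $0\to\mathcal{C}(-,E^{r-1}_{s})\xrightarrow{\,j\,}T(1,s)\to T(r,s)\to 0$; since $T(1,s)$ and $\mathcal{C}(-,E^{r-1}_{s})$ are projective, the long exact sequence collapses to
\[
\mathrm{Ext}^{1}_{\mathcal{C}}(T(r,s),T(r',s'))\cong\mathrm{coker}\Big(j^{*}\colon\mathrm{Hom}(T(1,s),T(r',s'))\to\mathrm{Hom}\big(\mathcal{C}(-,E^{r-1}_{s}),T(r',s')\big)\Big).
\]
By Yoneda the target is $T(r',s')(E^{r-1}_{s})$, and Lemma~\ref{tilting1}(iii), applied with $r$ replaced by $r-1$, shows this group vanishes unless $s=s'$ and $r-1\ge r'$; in all other cases $\mathrm{Ext}^{1}=0$ with no further work. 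In the remaining case $s=s'$, $r>r'$, I would produce a lift by hand: the canonical epimorphism $p\colon T(1,s)\twoheadrightarrow T(r',s)$ of Lemma~\ref{tilting1}(ii) satisfies $j^{*}(p)=p\circ j$, and since each of $\mathcal{C}(-,E^{r-1}_{s})$, $T(1,s)$ and $T(r',s)$ has one-dimensional value at $E^{r-1}_{s}$, with $j$ a monomorphism and $p$ an epimorphism, the composite $p\circ j$ carries $1_{E^{r-1}_{s}}$ to a generator of $T(r',s)(E^{r-1}_{s})\cong K$; thus $j^{*}$ is surjective and the cokernel vanishes.

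To have $\mathcal{T}$ genuinely self-orthogonal as a subcategory one repeats the last computation with $T(r',s')$ replaced by an arbitrary coproduct: $\mathrm{Hom}(\mathcal{C}(-,E^{r-1}_{s}),-)$ commutes with coproducts, and on the other variable one uses the description $T(1,s)=\varinjlim_{i}\mathcal{C}(-,E^{i}_{s})$ to rewrite $\mathrm{Hom}(T(1,s),-)$ as the associated inverse limit of evaluations, which stabilises because the evaluations of each $T(r'_{\alpha},s'_{\alpha})$ are eventually constant. This limit bookkeeping, together with the mesh-relation input underlying Lemma~\ref{tilting1}(iii), is the step I expect to require the most care; the remainder is formal homological algebra. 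If one also wants $\mathcal{T}$ to be the characteristic tilting category in the sense of \cite{Rin2}, it remains to note $T(r,s)\in\mathcal{F}(\Delta)$ by the preceding lemma and, dually, $T(r,s)\in\mathcal{F}(\nabla)$ by transporting the analogous filtration on $\mathcal{C}^{op}$ through the duality $D$ on locally finite modules, so that $\mathcal{T}\subseteq\mathcal{F}(\Delta)\cap\mathcal{F}(\nabla)$.
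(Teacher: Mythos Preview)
Your proposal follows essentially the same approach as the paper: both verify the three tilting axioms by using Lemma~\ref{tilting1}(i)--(ii) for projective dimension, Lemma~\ref{tilting1}(iv) for the coresolution of projectives, and for self-orthogonality both apply $\mathrm{Hom}(-,T(r',s'))$ to the sequence~(\ref{triangle1}), invoke Lemma~\ref{tilting1}(iii) to kill the easy cases, and in the remaining case $s=s'$, $r-1\ge r'$ lift an arbitrary element of the one-dimensional target through $p$ (the paper writes this as $j^{*}(\lambda p)=h$, you as ``$p\circ j$ is a generator''). Your additional remarks on coproducts and on $\mathcal{T}\subseteq\mathcal{F}(\Delta)\cap\mathcal{F}(\nabla)$ go beyond what the paper proves here, but they do not affect the core argument.
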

\begin{proof}
\textbf{i)} It is clear that $\mathrm{pdim}T(r,s)\le 1$ by (i) and (ii) in Lemma \ref{tilting1}.
\textbf{ii)} After applying $\mathrm{Hom}(-,T(r',s'))$  to the exact sequence (\ref{triangle1}), we obtain the following exact sequence:
\begin{eqnarray*}
 0\rightarrow\mathrm{Hom}(T(r,s),T(r',s'))\xrightarrow{p*} \mathrm{Hom}(T(1,s), T(r',s'))\xrightarrow{j*}\\
 \mathrm{Hom}( \mathcal{C}(-, E^{r-1}_s),T(r',s'))\xrightarrow{\partial}\mathrm{Ext}^1(T(r,s), T(r',s'))\rightarrow  0.
\end{eqnarray*}
First,  assume that $s\neq s'$ or  $s=s'$ and $r-1<r'$, then by Lemma \ref{tilting1}, we have  $\mathrm{Hom}( \mathcal{C}(-, E^{r-1}_s),T(r',s'))=0$ and 
$\mathrm{Ext}^1(T(r,s), T(r',s'))=0$. Assume that $s=s'$ and $r-1\ge r'$, and let $h\in  \mathrm{Hom}( \mathcal{C}(-, E^{r-1}_s),T(r',s'))$. If we write $h=\{h_{ij}:V_{ij}\rightarrow V_{ij}'\} $ as a map between representations, by commutativity, we can assume that each  $h_{ij}=\lambda 1_K$ for some $\lambda\in K$ or zero. in this way, we have the following commutative diagram:
\[
\begin{diagram}
\node{\mathcal{C}(-,E^{r-1}_s)}\arrow{e,t}{j}\arrow{s,l}{h}
 \node{T(1,s)} \arrow{sw,b}{\lambda p}\\
  \node{T(r',s)}
\end{diagram}
\]
 
 Therefore, $j*(\lambda p)=\lambda p j=h$. Hence,  $j*$ is an epimorphism, and $\mathrm{Ext}^1(T(r,s), T(r',s'))=0$.
 
 \textbf{iii)} This condition follows directly from  part (iv) of Lemma \ref{tilting1}.
\end{proof}

Finally, we illustrate the above theorem with an example 

\begin{center}
\begin{tikzpicture}

\filldraw[fill=gray!20,draw=gray!5!](1,0)-- (4,3)--(7,0)--cycle;
\foreach \x in {1,...,7}  
\draw[->] (\x+.1,.1) to (\x+0.4,0.4);
\foreach \x in {1,...,7}  
\draw[->] (\x +.6,0.6) to (\x+.9,.9);
\foreach \x in {1,...,7}  
\draw[->] (\x+.1,1.1) to (\x+0.4,1.4);
\foreach \x in {1,...,7}  
\draw[->] (\x+.6,1.6) to (\x+.9,1.9);
\foreach \x in {1,...,7}  
\draw[->] (\x+.1,2.1) to (\x+0.4,2.4);
\foreach \x in {1,...,7}  
\draw[->] (\x+.6,2.6) to (\x+.9,2.9);

\foreach \x in {1,...,7}  
\draw[->] (\x+.6,0.4) to (\x+.9,.1);
\foreach \x in {1,...,7}  
\draw[->] (\x+.1,.9) to (\x+0.4,0.6);
\foreach \x in {1,...,7}  
\draw[->] (\x+.6,1.4) to (\x+.9,1.1);
\foreach \x in {1,...,7}  
\draw[->] (\x+.1,1.9) to (\x+0.4,1.6);
\foreach \x in {1,...,7}  
\draw[->] (\x+.6,2.4) to (\x+.9,2.1);
\foreach \x in {1,...,7}  
\draw[->] (\x+.1,2.9) to (\x+.4,2.6);

\foreach \x in {1,...,7}  
\draw (\x,0) node{\tiny {K}};
\foreach \x in {1,...,6}  
\draw (\x+.5,.5) node{\tiny {K}};
\foreach \x in {1,...,5}  
\draw (\x+1,1) node{\tiny{K}};
\foreach \x in {1,...,4}  
\draw (\x+1.5,1.5) node{\tiny{K}};
\foreach \x in {1,...,3}  
\draw (\x+2,2) node{\tiny{K}};
\foreach \x in {1,...,2}  
\draw (\x+2.5,2.5) node{\tiny{K}};

\foreach \x in {5,...,8}  
\draw (\x,3) node{\tiny {0}};

\foreach \x in {5,...,7}  
\draw (\x+.5,2.5) node{\tiny {0}};

\draw (4,3) node{\tiny{K}};
\draw (8,0) node{\tiny{0}};
\draw (7.5,.5) node{\tiny{0}};

\draw (7,1) node{\tiny{0}};
\draw (8,1) node{\tiny{0}};

\draw (6.5,1.5) node{\tiny{0}};
\draw (7.5,1.5) node{\tiny{0}};

\draw (6,2) node{\tiny{0}};
\draw (7,2) node{\tiny{0}};
\draw (8,2) node{\tiny{0}};

\foreach \x in {1,...,3} 
\draw (\x,3) node{\tiny{0}};

\foreach \x in {1,...,2} 
\draw (\x+.5,2.5) node{\tiny{0}};

\foreach \x in {1,...,2} 
\draw (\x,2) node{\tiny{0}};

\draw (4,-.5) node{$T(1,1)$};

\end{tikzpicture}
\end{center}

\begin{center}
\begin{tikzpicture}
\filldraw[fill=gray!20,draw=gray!5!](3,0)-- (5,2)--(7,0)--cycle;
\foreach \x in {1,...,7}  
\draw[->] (\x+.1,.1) to (\x+0.4,0.4);
\foreach \x in {1,...,7}  
\draw[->] (\x +.6,0.6) to (\x+.9,.9);
\foreach \x in {1,...,7}  
\draw[->] (\x+.1,1.1) to (\x+0.4,1.4);
\foreach \x in {1,...,7}  
\draw[->] (\x+.6,1.6) to (\x+.9,1.9);
\foreach \x in {1,...,7}  
\draw[->] (\x+.1,2.1) to (\x+0.4,2.4);
\foreach \x in {1,...,7}  
\draw[->] (\x+.6,2.6) to (\x+.9,2.9);

\foreach \x in {1,...,7}  
\draw[->] (\x+.6,0.4) to (\x+.9,.1);
\foreach \x in {1,...,7}  
\draw[->] (\x+.1,.9) to (\x+0.4,0.6);
\foreach \x in {1,...,7}  
\draw[->] (\x+.6,1.4) to (\x+.9,1.1);
\foreach \x in {1,...,7}  
\draw[->] (\x+.1,1.9) to (\x+0.4,1.6);
\foreach \x in {1,...,7}  
\draw[->] (\x+.6,2.4) to (\x+.9,2.1);
\foreach \x in {1,...,7}  
\draw[->] (\x+.1,2.9) to (\x+.4,2.6);

\foreach \x in {1,...,8} 
\draw (\x,3) node{\tiny{0}};

\foreach \x in {1,...,7} 
\draw (\x+.5,2.5) node{\tiny{0}};

\foreach \x in {1,...,4} 
\draw (\x,2) node{\tiny{0}};

\foreach \x in {1,...,3} 
\draw (\x+.5,1.5) node{\tiny{0}};

\foreach \x in {1,...,3} 
\draw (\x,1) node{\tiny{0}};

\foreach \x in {1,...,2} 
\draw (\x+.5,.5) node{\tiny{0}};

\foreach \x in {1,...,2} 
\draw (\x,0) node{\tiny{0}};

\foreach \x in {3,...,7} 
\draw (\x,0) node{\tiny{K}};

\foreach \x in {3,...,6} 
\draw (\x+.5,0.5) node{\tiny{K}};

\foreach \x in {4,...,6} 
\draw (\x,1) node{\tiny{K}};

\foreach \x in {4,...,5} 
\draw (\x+.5,1.5) node{\tiny{K}};

\draw(5,2) node{\tiny{K}};

\foreach \x in {6,...,8} 
\draw (\x,2) node{\tiny{0}};

\foreach \x in {6,...,7} 
\draw (\x+.5,1.5) node{\tiny{0}};

\foreach \x in {7,...,8} 
\draw (\x,1) node{\tiny{0}};

\draw (7.5,.5) node{\tiny{0}};
\draw (8,0) node{\tiny{0}};

\draw (4,-.5) node{$T(3,1)$};

\end{tikzpicture}
\end{center}

\begin{center}
\begin{tikzpicture}
\filldraw[fill=gray!20,draw=gray!5!](1,0)-- (4,3)--(4.5,2.5)--(2,0)--cycle;
\foreach \x in {1,...,7}  
\draw[->] (\x+.1,.1) to (\x+0.4,0.4);
\foreach \x in {1,...,7}  
\draw[->] (\x +.6,0.6) to (\x+.9,.9);
\foreach \x in {1,...,7}  
\draw[->] (\x+.1,1.1) to (\x+0.4,1.4);
\foreach \x in {1,...,7}  
\draw[->] (\x+.6,1.6) to (\x+.9,1.9);
\foreach \x in {1,...,7}  
\draw[->] (\x+.1,2.1) to (\x+0.4,2.4);
\foreach \x in {1,...,7}  
\draw[->] (\x+.6,2.6) to (\x+.9,2.9);

\foreach \x in {1,...,7}  
\draw[->] (\x+.6,0.4) to (\x+.9,.1);
\foreach \x in {1,...,7}  
\draw[->] (\x+.1,.9) to (\x+0.4,0.6);
\foreach \x in {1,...,7}  
\draw[->] (\x+.6,1.4) to (\x+.9,1.1);
\foreach \x in {1,...,7}  
\draw[->] (\x+.1,1.9) to (\x+0.4,1.6);
\foreach \x in {1,...,7}  
\draw[->] (\x+.6,2.4) to (\x+.9,2.1);
\foreach \x in {1,...,7}  
\draw[->] (\x+.1,2.9) to (\x+.4,2.6);

\foreach \x in {0,.5,1,1.5,2} 
\draw (\x+1,\x+1) node{\tiny{0}};

\foreach \x in {0,.5,1} 
\draw (\x+1,\x+2) node{\tiny{0}};

\foreach \x in {0,.5,1,1.5,2,2.5,3} 
\draw (\x+1,\x) node{\tiny{K}};

\foreach \x in {0,.5,1,1.5,2,2.5} 
\draw (\x+2,\x) node{\tiny{K}};

\foreach \x in {0,.5,1,1.5,2,2.5,3} 
\draw (\x+3,\x) node{\tiny{0}};

\foreach \x in {0,.5,1,1.5,2,2.5,3} 
\draw (\x+4,\x) node{\tiny{0}};

\foreach \x in {0,.5,1,1.5,2,2.5,3} 
\draw (\x+4,\x) node{\tiny{0}};

\foreach \x in {0,.5,1,1.5,2,2.5,3} 
\draw (\x+5,\x) node{\tiny{0}};

\foreach \x in {0,.5,1,1.5,2} 
\draw (\x+6,\x) node{\tiny{0}};

\foreach \x in {0,.5,1} 
\draw (\x+7,\x) node{\tiny{0}};

\draw (8,0) node{\tiny{0}};
\draw (5,3) node{\tiny{0}};

\draw (1,3) node{\tiny{0}};

\draw (4,-.5) node{$\mathcal{C}(-,E^2_1)$};

\end{tikzpicture}
\end{center}

The above diagrams show how to obtain the exact sequence:
\begin{eqnarray*}
0\rightarrow \mathcal{C}(-,E^2_1)\rightarrow T(1,1)\rightarrow T(3,1)\rightarrow 0
\end{eqnarray*}

\bigskip
Of course many other filtrations for these categories can be given. For example, for the category $K(\mathbb{Z}A_\infty,\sigma)$, we can take $B_1=\{E^1_1\}$ and $B_i=B_{i-1}\cup\{E^i_j\}_{j\in\mathbb{N}}$ as shown in the next picture.

  \begin{center}
\begin{tikzpicture}
\foreach \x in {1,...,7}  
\draw[->] (\x+.1,.1) to (\x+0.4,0.4);
\foreach \x in {1,...,7}  
\draw[->] (\x +.6,0.6) to (\x+.9,.9);
\foreach \x in {1,...,7}  
\draw[->] (\x+.1,1.1) to (\x+0.4,1.4);
\foreach \x in {1,...,7}  
\draw[->] (\x+.6,1.6) to (\x+.9,1.9);
\foreach \x in {1,...,7}  
\draw[->] (\x+.1,2.1) to (\x+0.4,2.4);
\foreach \x in {1,...,7}  
\draw[->] (\x+.6,2.6) to (\x+.9,2.9);

\foreach \x in {1,...,7}  
\draw[->] (\x+.6,0.4) to (\x+.9,.1);
\foreach \x in {1,...,7}  
\draw[->] (\x+.1,.9) to (\x+0.4,0.6);
\foreach \x in {1,...,7}  
\draw[->] (\x+.6,1.4) to (\x+.9,1.1);
\foreach \x in {1,...,7}  
\draw[->] (\x+.1,1.9) to (\x+0.4,1.6);
\foreach \x in {1,...,7}  
\draw[->] (\x+.6,2.4) to (\x+.9,2.1);
\foreach \x in {1,...,7}  
\draw[->] (\x+.1,2.9) to (\x+.4,2.6);

\draw (5,3) node{\tiny{$E_1^1$}};
\draw (5,2) node{\tiny{$E_1^2$}};
\draw (5,1) node{\tiny{$E_2^2$}};
\draw (5,0) node{\tiny{$E_3^2$}};

\draw (4.5,2.5) node{\tiny{$E_1^3$}};
\draw (4.5,1.5) node{\tiny{$E_3^3$}};
\draw (4.5,0.5) node{\tiny{$E_5^3$}};

\draw (5.5,2.5) node{\tiny{$E_2^3$}};
\draw (5.5,1.5) node{\tiny{$E_4^3$}};
\draw (5.5,0.5) node{\tiny{$E_6^3$}};

\draw (4,3) node{\tiny{$E_1^4$}};
\draw (4,2) node{\tiny{$E_3^4$}};
\draw (4,1) node{\tiny{$E_5^4$}};
\draw (4,0) node{\tiny{$E_7^4$}};

\draw (6,3) node{\tiny{$E_2^4$}};
\draw (6,2) node{\tiny{$E_4^4$}};
\draw (6,1) node{\tiny{$E_6^4$}};
\draw (6,0) node{\tiny{$E_8^4$}};


\draw (3.5,2.5) node{\tiny{$E_1^5$}};
\draw (3.5,1.5) node{\tiny{$E_3^5$}};
\draw (3.5,0.5) node{\tiny{$E_5^5$}};

\draw (6.5,2.5) node{\tiny{$E_2^5$}};
\draw (6.5,1.5) node{\tiny{$E_4^5$}};
\draw (6.5,0.5) node{\tiny{$E_6^5$}};

\draw (4.5,-.5) node{Labeled vertices in the graph $\mathbb{Z}\times A_{\infty}$};

\end{tikzpicture}
\end{center}
\bigskip

Another filtration for this category  is given by 
$B_1=\{E_1^1\}$ and
  \[
B_i= \begin{cases} B_{i-1}\cup \{E^i_j\}_{1\le j\le 2i },\ \ \mbox{if $i$ is even}; \\
B_{i-1}\cup \{E^i_j\}_{1\le j\le 2i-1},\ \ \mbox{if $i$ is odd} .
 \end{cases} 
\]
  \begin{center}
\begin{tikzpicture}
\foreach \x in {1,...,7}  
\draw[->] (\x+.1,.1) to (\x+0.4,0.4);
\foreach \x in {1,...,7}  
\draw[->] (\x +.6,0.6) to (\x+.9,.9);
\foreach \x in {1,...,7}  
\draw[->] (\x+.1,1.1) to (\x+0.4,1.4);
\foreach \x in {1,...,7}  
\draw[->] (\x+.6,1.6) to (\x+.9,1.9);
\foreach \x in {1,...,7}  
\draw[->] (\x+.1,2.1) to (\x+0.4,2.4);
\foreach \x in {1,...,7}  
\draw[->] (\x+.6,2.6) to (\x+.9,2.9);

\foreach \x in {1,...,7}  
\draw[->] (\x+.6,0.4) to (\x+.9,.1);
\foreach \x in {1,...,7}  
\draw[->] (\x+.1,.9) to (\x+0.4,0.6);
\foreach \x in {1,...,7}  
\draw[->] (\x+.6,1.4) to (\x+.9,1.1);
\foreach \x in {1,...,7}  
\draw[->] (\x+.1,1.9) to (\x+0.4,1.6);
\foreach \x in {1,...,7}  
\draw[->] (\x+.6,2.4) to (\x+.9,2.1);
\foreach \x in {1,...,7}  
\draw[->] (\x+.1,2.9) to (\x+.4,2.6);

\draw (4.5,2.5) node{\tiny{$E_1^1$}};

\draw (4,3) node{\tiny{$E_1^2$}};
\draw (4,2) node{\tiny{$E_2^2$}};
\draw (5,2) node{\tiny{$E_3^2$}};
\draw (5,3) node{\tiny{$E_4^2$}};
\draw (3.5,2.5) node{\tiny{$E_1^3$}};
\draw (3.5,1.5) node{\tiny{$E_2^3$}};
\draw (4.5,1.5) node{\tiny{$E_3^3$}};
\draw (5.5,1.5) node{\tiny{$E_4^3$}};
\draw (5.5,2.5) node{\tiny{$E_5^3$}};
\draw (3,3) node{\tiny{$E_1^4$}};
\draw (3,2) node{\tiny{$E_2^4$}};
\draw (3,1) node{\tiny{$E_3^4$}};
\draw (4,1) node{\tiny{$E_4^4$}};
\draw (5,1) node{\tiny{$E_5^4$}};
\draw (6,1) node{\tiny{$E_6^4$}};
\draw (6,2) node{\tiny{$E_7^4$}};
\draw (6,3) node{\tiny{$E_8^4$}};

\draw (4.5,-.5) node{Labeled vertices in the graph $\mathbb{Z}\times A_{\infty}$};

\end{tikzpicture}
\end{center}

Observe that filtrations of the same style  for the above examples could be given. Therefore, a category $\mathcal{C}$  can be considered cuasi-hereditary with respect to different filtrations.

\subsection{Tensor product of quasi-hereditary categories}

Let $K$ be a field. Assume that  $\mathcal{C}_1$ and $\mathcal{C}_2$ are  quasi-hereditary $K$-categories. In this section, we show that the tensor product $\mathcal{C}_1\otimes_K \mathcal{C}_{2}$ is quasi-hereditary.  

First we  remember some facts about the  tensor product $\mathcal{C}_1\otimes\mathcal{C}_2$ of  two categories (see \cite{MB}).  This is the category whose
class of objects is  $|\mathcal{C}_1|\times |\mathcal{C}_2|$, where the abelian group of morphisms
from $(X_1,X_2)$ to $(Y_1,Y_2)$   is the ordinary tensor product of $K$-vectorial spaces $\mathcal{C}(X_1,Y_1)\otimes_K \mathcal{C}(X_1,Y_1)$

Consider additive functors $F:\mathcal{C}_1\rightarrow \mathrm{Mod}(K)$,  
$G:\mathcal{C}_2\rightarrow \mathrm{Mod}(K)$. They induce the additive bifunctor
 \[
 F\otimes G:\mathcal{C}_1\otimes \mathcal{C}_2\rightarrow \mathrm{Mod}(K)
 \]
wich is defined by $F\otimes G((X,Y))=F(X)\otimes G(Y)$. Assume  that 
$I(-,?)$ and $J(-,?)$ are two-sided ideals
in $\mathcal{C}_1$ and $\mathcal{C}_2$ respectively.  We can then  see that $J(-,?)\otimes \mathcal{C}_2(-,?)$ is a two-sided ideal in $\mathcal{C}_1\otimes\mathcal{C}_2$. 

\begin{theorem}
Let $\mathcal{C}_1$ and $\mathcal{C}_2$  be quasi-hereditary $K$-categories. Then the tensor product $\mathcal{C}_1\otimes \mathcal{C}_2$
is quasi-hereditary.
\end{theorem}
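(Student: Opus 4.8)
The plan is to exhibit for $\mathcal{C}:=\mathcal{C}_1\otimes_K\mathcal{C}_2$ an exhaustive filtration into subcategories closed under direct summands and then to verify conditions (i) and (ii) of Theorem \ref{TheoremQh}. Fix exhaustive filtrations $0=\mathcal{B}^1_0\subset\mathcal{B}^1_1\subset\cdots$ and $0=\mathcal{B}^2_0\subset\mathcal{B}^2_1\subset\cdots$ witnessing that $\mathcal{C}_1$ and $\mathcal{C}_2$ are quasi-hereditary. Since $K$ is algebraically closed and the $\mathcal{C}_i$ are $\mathrm{Hom}$-finite with split idempotents (hence Krull--Schmidt), one has $\mathrm{End}_{\mathcal{C}}(E_1\otimes E_2)\cong\mathrm{End}_{\mathcal{C}_1}(E_1)\otimes_K\mathrm{End}_{\mathcal{C}_2}(E_2)$, which is local when $E_1,E_2$ are indecomposable; every indecomposable of $\mathcal{C}$ is of this form, and $\mathrm{rad}_{\mathcal{C}}=\mathrm{rad}_{\mathcal{C}_1}\otimes\mathrm{Hom}_{\mathcal{C}_2}+\mathrm{Hom}_{\mathcal{C}_1}\otimes\mathrm{rad}_{\mathcal{C}_2}$. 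Thus each $E_1\otimes E_2$ has a well-defined \emph{bidegree} $(a,b)$, where $a$ (resp.\ $b$) is least with $E_1$ a summand of an object of $\mathcal{B}^1_a$ (resp.\ $E_2$ of $\mathcal{B}^2_b$). Equip $\mathbb{Z}_{\ge1}\times\mathbb{Z}_{\ge1}$ with the total order in which $(a,b)\prec(c,d)$ if $a+b<c+d$, and $(a,b)\preceq(c,d)$ if $a+b=c+d$ and $a\le c$; this refines the product order and has finite initial segments $\varnothing\subsetneq T_1\subsetneq T_2\subsetneq\cdots$ exhausting $\mathbb{Z}_{\ge1}^2$. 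Let $\mathcal{D}_0=0$ and let $\mathcal{D}_n$ be the additive closure of all $E_1\otimes E_2$ whose bidegree lies in $T_n$. Since $T_n$ is a finite down-set, $\{\mathcal{D}_n\}_{n\ge0}$ is an exhaustive filtration of $\mathcal{C}$ into subcategories closed under summands, and $T_n\setminus T_{n-1}$ consists of a single ``new'' and $\le$-maximal bidegree $(p,q)$.

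For condition (i) of Theorem \ref{TheoremQh}, let $E_1\otimes E_2$ and $E_1'\otimes E_2'$ be indecomposables of this new bidegree $(p,q)$. From $\mathcal{C}(E_1\otimes E_2,E_1'\otimes E_2')=\mathcal{C}_1(E_1,E_1')\otimes_K\mathcal{C}_2(E_2,E_2')$, the radical decomposition above, and condition (i) applied to $\mathcal{C}_1$ and $\mathcal{C}_2$ (which give $\mathrm{rad}_{\mathcal{C}_1}(E_1,E_1')=I_{\mathcal{B}^1_{p-1}}(E_1,E_1')$ and $\mathrm{rad}_{\mathcal{C}_2}(E_2,E_2')=I_{\mathcal{B}^2_{q-1}}(E_2,E_2')$), every element of $\mathrm{rad}_{\mathcal{C}}(E_1\otimes E_2,E_1'\otimes E_2')$ is a sum of maps $E_1\otimes E_2\to F_1\otimes E_2\to E_1'\otimes E_2'$ with $F_1\in\mathcal{B}^1_{p-1}$ or $E_1\otimes E_2\to E_1\otimes F_2\to E_1'\otimes E_2'$ with $F_2\in\mathcal{B}^2_{q-1}$; as $F_1\otimes E_2$ has bidegree $\le(p-1,q)$ and $E_1\otimes F_2$ bidegree $\le(p,q-1)$, both lie in $\mathcal{D}_{n-1}$. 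Conversely every morphism between these indecomposables factoring through $\mathcal{D}_{n-1}$ is a non-isomorphism, hence radical. Therefore $\mathrm{rad}_{\mathcal{C}}(E_1\otimes E_2,E_1'\otimes E_2')=I_{\mathcal{D}_{n-1}}(E_1\otimes E_2,E_1'\otimes E_2')$.

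For condition (ii), I would first establish, by the argument of Lemma \ref{IdempotentIdeal}, that for every $X_1\otimes X_2$ in $\mathcal{C}$ one has $I_{\mathcal{D}_n}(-,X_1\otimes X_2)=\sum_{(a,b)\in T_n}I_{\mathcal{B}^1_a}(-,X_1)\otimes_K I_{\mathcal{B}^2_b}(-,X_2)$ inside $\mathcal{C}_1(-,X_1)\otimes_K\mathcal{C}_2(-,X_2)=\mathcal{C}(-,X_1\otimes X_2)$. As $T_n$ is a down-set with $\le$-maximal last element $(p,q)$, this collapses to $I_{\mathcal{D}_n}(-,X_1\otimes X_2)=I_{\mathcal{D}_{n-1}}(-,X_1\otimes X_2)+I_{\mathcal{B}^1_p}(-,X_1)\otimes I_{\mathcal{B}^2_q}(-,X_2)$, and — using that both families of trace ideals are chains, so that intersection distributes over these sums pointwise — the intersection of the two summands equals $I_{\mathcal{B}^1_{p-1}}(-,X_1)\otimes I_{\mathcal{B}^2_q}(-,X_2)+I_{\mathcal{B}^1_p}(-,X_1)\otimes I_{\mathcal{B}^2_{q-1}}(-,X_2)$. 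By the standard isomorphism $(U\otimes V)/(U'\otimes V+U\otimes V')\cong(U/U')\otimes(V/V')$ over a field,
\[
\frac{I_{\mathcal{D}_n}(-,X_1\otimes X_2)}{I_{\mathcal{D}_{n-1}}(-,X_1\otimes X_2)}\ \cong\ \frac{I_{\mathcal{B}^1_p}(-,X_1)}{I_{\mathcal{B}^1_{p-1}}(-,X_1)}\ \otimes_K\ \frac{I_{\mathcal{B}^2_q}(-,X_2)}{I_{\mathcal{B}^2_{q-1}}(-,X_2)}.
\]
By heredity of the chains of $\mathcal{C}_1$ and $\mathcal{C}_2$ together with Lemma \ref{quotient01}, each factor on the right is a projective module over the respective quotient category and is isomorphic to $\mathcal{C}_i(-,E^i)/I_{\mathcal{B}^i_{\star}}(-,E^i)$ for suitable $E^1\in\mathcal{B}^1_p$ and $E^2\in\mathcal{B}^2_q$; running the same computation with $X_1\otimes X_2$ replaced by $E^1\otimes E^2\in\mathcal{D}_n$ identifies the right-hand side with $\mathcal{C}(-,E^1\otimes E^2)/I_{\mathcal{D}_{n-1}}(-,E^1\otimes E^2)$. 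Hence $I_{\mathcal{D}_n}(-,X_1\otimes X_2)/I_{\mathcal{D}_{n-1}}(-,X_1\otimes X_2)$ is a projective $\mathcal{C}/I_{\mathcal{D}_{n-1}}$-module, and a pushout-and-horseshoe argument exactly as in the proof of Theorem \ref{TheoremQh}(ii), fed with the level-$(n-1)$ presentations (available by induction on $n$), yields an exact sequence $\mathcal{C}(-,E')\to\mathcal{C}(-,E)\to I_{\mathcal{D}_n}(-,X_1\otimes X_2)\to0$ with $E\in\mathcal{D}_n$ and $E'\in\mathcal{D}_{n-1}$. By Theorem \ref{TheoremQh}, $\mathcal{C}_1\otimes_K\mathcal{C}_2$ is quasi-hereditary with respect to $\{\mathcal{D}_n\}$.

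The main obstacle is the bookkeeping in step (ii): one must pin down $I_{\mathcal{D}_n}(-,X_1\otimes X_2)$ explicitly as a finite sum of external tensors of trace ideals, control its intersection with $I_{\mathcal{D}_{n-1}}$, and check at each turn that the objects produced by the tensored presentations of $\mathcal{C}_1$ and $\mathcal{C}_2$ land in $\mathcal{D}_{n-1}$ or $\mathcal{D}_n$ — which is precisely where the fact that $\preceq$ refines the product order on bidegrees is used. A preliminary point requiring care is the structure theory of $\mathcal{C}_1\otimes_K\mathcal{C}_2$ (indecomposables are tensors of indecomposables, and $\mathrm{rad}_{\mathcal{C}}$ decomposes as the stated sum), which rests on $K$ being algebraically closed.
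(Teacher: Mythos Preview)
Your argument is correct but follows a genuinely different route from the paper's.

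The paper works directly at the level of heredity chains of \emph{ideals}: starting from chains $\mathcal{C}_1(-,?)\supset I_1\supset I_2\supset\cdots$ and $\mathcal{C}_2(-,?)\supset J_1\supset J_2\supset\cdots$, it interleaves them into a single chain in $\mathcal{C}_1\otimes\mathcal{C}_2$ whose generic term is $\mathcal{C}_1(-,?)\otimes J_k+I_\ell\otimes\mathcal{C}_2(-,?)$, and then checks the three heredity conditions for each successive quotient by pure tensor algebra (using identities such as $(\mathcal{C}_1/I_1)\otimes(J_1/J_2)$ for a typical quotient, together with $\mathrm{rad}(A\otimes B)=\mathrm{rad}\,A\otimes B+A\otimes\mathrm{rad}\,B$). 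No subcategory filtration, no bidegree combinatorics, and no appeal to Theorem~\ref{TheoremQh} are used.

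Your approach instead stays inside the subcategory framework: you build an exhaustive filtration $\{\mathcal{D}_n\}$ from bidegrees ordered by a total refinement of the product order and verify the two conditions of Theorem~\ref{TheoremQh}. This has the advantage that the filtration you produce is again of trace-ideal type $I_{\mathcal{D}_n}$, so the output lives in the same world as the inputs (useful if one wants to iterate, or to talk about standard subcategories of the tensor product). The diagonal order also handles the doubly-infinite case cleanly, whereas the paper's alternating interleaving implicitly relies on at least one chain being finite (or on a more careful enumeration) to be exhaustive. On the other hand, the paper's argument applies to arbitrary heredity chains, not only those arising from subcategory filtrations, so it is nominally more general; your proof begins by assuming the $\mathcal{C}_i$ are quasi-hereditary \emph{with respect to} some $\{\mathcal{B}^i_j\}$, which is the second definition in the paper rather than the first.
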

\begin{proof}
Let
\begin{eqnarray*}
 \mathcal{C}_1 (-,?)\supset I_1(-,?)\supset I_2(-,?)\supset I_3(-,?)\supset\cdots \\
 \mathcal{C}_2 (-,?)\supset J_1(-,?)\supset J_2(-,?)\supset J_3(-,?)\supset\cdots
 \end{eqnarray*}
 the corresponding heredity chains of ideals.
 
 We can then can form a chain of ideals:
 \begin{eqnarray*}
 \mathcal{C}_1 (-,?)\otimes\mathcal{C}_2(-,?)\supset \mathcal{C}_1 (-,?)\otimes J_1+I_1\otimes\mathcal{C}_2 (-,?)\supset
 \mathcal{C}_1 (-,?)\otimes J_2+I_1\otimes\mathcal{C}_2 (-,?)\supset\\
 \mathcal{C}_1 (-,?)\otimes J_2+I_2\otimes\mathcal{C}_2 (-,?)\supset\mathcal{C}_1 (-,?)\otimes J_3+I_2\otimes\mathcal{C}_2 (-,?)\supset\cdots.\ \ \ \ \ \ \ \ \ \ \ \  \ \ \
 \end{eqnarray*}
 We show that the above is a heredity chain.
 
  Consider the inclusion
 \[
 \frac{\mathcal{C}_1 (-,?)\otimes\mathcal{C}_2(-,?)}{\mathcal{C}_1 (-,?)\otimes J_2+I_1\otimes\mathcal{C}_2 (-,?)}\supset\frac{\mathcal{C}_1 (-,?)\otimes J_1+I_1\otimes\mathcal{C}_2 (-,?)}{\mathcal{C}_1 (-,?)\otimes J_2+I_1\otimes\mathcal{C}_2 (-,?)}.
 \]
 
 Observe that 
 \begin{eqnarray*}
 \frac{\mathcal{C}_1 (-,?)\otimes\mathcal{C}_2(-,?)}{\mathcal{C}_1 (-,?)\otimes J_2+I_1\otimes\mathcal{C}_2 (-,?)}&\cong&
 \frac{\frac{\mathcal{C}_1 (-,?)\otimes \mathcal{C}_2 (-,?)+I_1\otimes \mathcal{C}_2 (-,?)}{I_1\otimes \mathcal{C}_2 (-,?)}}{\frac{\mathcal{C}_1 (-,?)\otimes J_2+I_1\otimes \mathcal{C}_2 (-,?)}{I_1\otimes \mathcal{C}_2 (-,?)}}\\
 &\cong&\frac{\frac{\mathcal{C}_1 (-,?)\otimes \mathcal{C}_2 (-,?)}{I_1\otimes \mathcal{C}_2 (-,?)}}{\frac{\mathcal{C}_1 (-,?)\otimes J_2 }{I_1\otimes J_2 }}\\
 &\cong&\frac{\frac{\mathcal{C}_1(-,?)}{I_1}\otimes \mathcal{C}_2(-,?)} {\frac{\mathcal{C}_1(-,?)}{I_1}\otimes J_2}\\
 &\cong&\frac{\mathcal{C}_1(-,?)}{I_1}\otimes \frac{\mathcal{C}_2(-,?)}{J_2}.
 \end{eqnarray*}
 
 On the other hand,
 \begin{eqnarray*}
 \frac{\mathcal{C}_1 (-,?)\otimes J_1+I_1\otimes\mathcal{C}_2 (-,?)}{\mathcal{C}_1 (-,?)\otimes J_2+I_1\otimes\mathcal{C}_2 (-,?)}&\cong&
 \frac{\frac{\mathcal{C}_1 (-,?)\otimes J_1+I_1\otimes\mathcal{C}_2 (-,?)}{I_1\otimes J_1}}{\frac{\mathcal{C}_1 (-,?)\otimes J_2+I_1\otimes\mathcal{C}_2 (-,?)}{I_1\otimes J_1}}\\
 &\cong&\frac{\frac{\mathcal{C}_1 (-,?)\otimes J_1}{I_1\otimes J_1}}{\frac{\mathcal{C}_1 (-,?)\otimes J_2}{I_1\otimes J_2}}\\
 &\cong&\frac{\frac{\mathcal{C}_1 (-,?)}{I_1}\otimes J_1}{\frac{\mathcal{C}_1 (-,?)}{I_1}\otimes J_2}\\
 &\cong&\frac{\mathcal{C}_1 (-,?)}{I_1}\otimes \frac{J_1}{J_2}
 \end{eqnarray*}
 
 In this manner proving that   $\frac{\mathcal{C}_1 (-,?)}{I_1}\otimes \frac{J_1}{J_2}$ is a heredity ideal of $\frac{\mathcal{C}_1(-,?)}{I_1}\otimes \frac{\mathcal{C}_2(-,?)}{J_2}$ is sufficient.
 
 (i)  It is clear that $[\mathcal{C}_1(-,?)]^2=\mathcal{C}_1(-,?)$. This implies that 
 \[
 \left[\frac{\mathcal{C}_1(-,?)}{I_1}\right]^2=\frac{\mathcal{C}_1(-,?)^2+I_1}{I_1}=\frac{\mathcal{C}_1(-,?)}{I_1}
 \]
 Thus, we have 
 \[
 \left[\frac{\mathcal{C}_1 (-,?)}{I_1}\otimes \frac{J_1}{J_2}\right]^2=\left[\frac{\mathcal{C}_1 (-,?)}{I_1}\right]^2\otimes \left[  \frac{J_1}{J_2}\right]^2=\frac{\mathcal{C}_1 (-,?)}{I_1}\otimes \frac{J_1}{J_2}
 \]
 It follows that $\frac{\mathcal{C}_1 (-,?)}{I_1}\otimes \frac{J_1}{J_2}$ is idempotent.
 
 (ii)  Observe that
 \[
 \mathrm{rad}\left[ \frac{\mathcal{C}_1(-,?)}{I_1}\otimes \frac{\mathcal{C}_2(-,?)}{J_2} \right]=
 \mathrm{rad}\left[ \frac{\mathcal{C}_1(-,?)}{I_1} \right]\otimes  \frac{\mathcal{C}_2(-,?)}{J_2} +  \frac{\mathcal{C}_1(-,?)}{I_1} \otimes   \mathrm{rad}\left[ \frac{\mathcal{C}_2(-,?)}{J_2} \right].
 \]
 
 Since $\mathcal{C}_1$ and $\mathcal{C}_2$ are quasi-hereditary, we have 
 \[
 \frac{\mathcal{C}_1(-,?)}{I_1} \mathrm{rad}\left[ \frac{\mathcal{C}_1(-,?)}{I_1}\right]\frac{\mathcal{C}_1(-,?)}{I_1}=
  \frac{J_1}{J_2}\mathrm{rad}\left[ \frac{\mathcal{C}_2(-,?)}{J_2} \right]\frac{J_1}{J_2}=0.
 \]
 It follows that
 \begin{eqnarray*}
\left( \frac{\mathcal{C}_1 (-,?)}{I_1}\otimes \frac{J_1}{J_2} \right) \mathrm{rad}\left[ \frac{\mathcal{C}_1(-,?)}{I_1}\otimes \frac{\mathcal{C}_2(-,?)}{J_2} \right]
 \left( \frac{\mathcal{C}_1 (-,?)}{I_1}\otimes \frac{J_1}{J_2}\right)\cong\\
 \frac{\mathcal{C}_1(-,?)}{I_1} \mathrm{rad}\left[ \frac{\mathcal{C}_1(-,?)}{I_1}\right]\frac{\mathcal{C}_1(-,?)}{I_1}\otimes \frac{J_1}{J_2} \frac{\mathcal{C}_2(-,?)}{J_2} \frac{J_1}{J_2}
+ \left(\frac{\mathcal{C}_1(-,?)}{I_1}\right)^3\otimes \frac{J_1}{J_2}\mathrm{rad}\left[ \frac{\mathcal{C}_2(-,?)}{J_2} \right]\frac{J_1}{J_2}
 \end{eqnarray*}
 This is,
 \[
 \left( \frac{\mathcal{C}_1 (-,?)}{I_1}\otimes \frac{J_1}{J_2} \right) \mathrm{rad}\left[ \frac{\mathcal{C}_1(-,?)}{I_1}\otimes \frac{\mathcal{C}_2(-,?)}{J_2} \right]
 \left( \frac{\mathcal{C}_1 (-,?)}{I_1}\otimes \frac{J_1}{J_2}\right)\cong 0.
 \]
 
 (iii) Observe that $\frac{J_1(-,?)}{J_2(-,?)}$ is a projective $\frac{\mathcal{C}_2}{J_2}$-module. Thus, $\frac{\mathcal{C}_1 (-,?)}{I_1}\otimes \frac{J_1(-,?)}{J_2(-,?)}$
  is a projective $\frac{\mathcal{C}_1}{J_1}\otimes\frac{\mathcal{C}_2}{J_2}$-module.
\end{proof}


\section*{Acknowledgements}
The author would like to thank  Prof. Roberto Mart\'inez-Villa  for introducing him to the study of the quasi-hereditary algebras and for his valuable comments and suggestions for the preparation of this work. The author would also like to thank PROMEP/103.5/13/6535 for the financial support  received for the development of this project.

\end{document}